\setlist[description]{style=multiline,topsep=4pt,align=parright}
\let\reftagform@=\tagform@
\def\tagform@#1{\maketag@@@{(\ignorespaces\textcolor{black}{#1}\unskip\@@italiccorr)}}
\newcommand{\iref}[1]{\textup{\reftagform@{\tcr{\ref{#1}}}}}
\begin{document}
\title{Stochastic Inertial Dynamics Via Time Scaling and Averaging}
\author{Rodrigo Maulen-Soto\thanks{Normandie Universit\'e, ENSICAEN, UNICAEN, CNRS, GREYC, France. E-mail: rodrigo.maulen@ensicaen.fr} \and
Jalal Fadili\thanks{Normandie Universit\'e, ENSICAEN, UNICAEN, CNRS, GREYC, France. E-mail: Jalal.Fadili@ensicaen.fr} \and Hedy Attouch\thanks{IMAG, CNRS, Universit\'e Montpellier, France. E-mail: hedy.attouch@umontpellier.fr} \and Peter Ochs\thanks{Department of Mathematics and Computer Science, Saarland University, Germany, E-mail: ochs@math.uni-sb.de}
}
\date{}
\maketitle

\begin{abstract}
Our work is part of the close link between continuous-time dissipative dynamical systems and optimization algorithms, and more precisely here, in the stochastic setting. We aim to study stochastic convex minimization problems through the lens of stochastic inertial differential inclusions that are driven by the subgradient of a convex objective function. This will provide a general mathematical framework for analyzing the convergence properties of stochastic second-order inertial continuous-time dynamics involving vanishing viscous damping and measurable stochastic subgradient selections. Our chief goal in this paper is to develop a systematic and unified way that transfers the properties recently studied for first-order stochastic differential equations to second-order ones involving even subgradients in lieu of gradients. This program will rely on two tenets: time scaling and averaging, following an approach recently developed in the literature by one of the co-authors in the deterministic case. Under a mild integrability assumption involving the diffusion term and the viscous damping, our first main result shows that almost surely, there is weak convergence of the trajectory towards a minimizer of the objective function and fast convergence of the values and gradients. We also provide a comprehensive complexity analysis by establishing several new pointwise and ergodic convergence rates in expectation for the convex, strongly convex, and (local) Polyak-{\L}ojasiewicz case. Finally, using Tikhonov regularization with a properly tuned vanishing parameter, we can obtain almost sure strong convergence of the trajectory towards the minimum norm solution.
\end{abstract}

\begin{keywords}
Stochastic convex optimization; Inertial gradient systems; Stochastic Differential Equation; Tikhonov regularization; Time-dependent viscosity.
\end{keywords}

\begin{AMS}
37N40, 46N10, 49M99, 65B99, 65K05, 65K10, 90B50, 90C25, 60H10, 49J52, 90C53. 
\end{AMS}

\section{Introduction}\label{sec:intro}

\subsection{Problem Statement}\label{problem_statement}

We consider the minimization problem
\begin{equation}\label{P}\tag{P}
    \min_{x\in \H} F(x)\eqdef f(x)+g(x),
\end{equation}
where $\H$ is a separable real Hilbert space, and the objective $F$ satisfies the following standing assumptions:
\begin{align}\label{H0}
\begin{cases}
\text{$f:\H\rightarrow\R$ is continuously differentiable and convex with $L$-Lipschitz continuous gradient}; \\
\text{$g:\H\rightarrow\R\cup\{\pm\infty\}$ is proper, lower semi-continuous (lsc) and convex};\\
\calS_F\eqdef \argmin (F)\neq\emptyset. \tag{$\mathrm{H}_0$} 
\end{cases}
\end{align} 

\tcm{
\paragraph{First-order in time systems.}
To solve \eqref{P} when $g\equiv 0$, a fundamental dynamic is the gradient flow system:
\begin{equation}
\begin{cases}\label{gf}\tag{GF}
\begin{aligned}
\dot{x}(t)+\nabla f(x(t))&= 0,\quad t>t_0;\\
x(t_0)&=x_0.
\end{aligned}
\end{cases}
\end{equation}
The gradient system \eqref{gf} is a dissipative dynamical system, whose study dates back to Cauchy \cite{Cauchy} in finite dimension. It plays a fundamental role in optimization: it transforms the problem of minimizing $f$ into the study of the asymptotic behavior of the trajectories of \eqref{gf}. This example was the precursor to the rich connection between continuous dissipative dynamical systems and optimization. It is well known since the founding papers of Brezis, Baillon, Bruck in the 1970s that, if the solution set $\argmin(f)$ of \eqref{P} is non-empty, then each solution trajectory of \eqref{gf} converges weakly, and its (weak) limit belongs to $\argmin (f)$. Moreover, this dynamic is known to yield a convergence rate of $\mathcal{O}(t^{-1})$ (in fact even $o(t^{-1})$) on the values.

The Euler forward (a.k.a. Euler-Maruyama) discretization \eqref{gf}, with stepsize sequence $h_k>0$, is the celebrated gradient descent scheme 
\begin{equation}\label{GD}\tag{GD}
    x_{k+1}=x_k-h_k \nabla f(x_k).
\end{equation} 
Under \eqref{H0}, and for $(h_k)_{k \in \N} \subset ]0,2/L[$, then we have both the convergence of the values \linebreak $f(x_k)- \min f=\calO(1/k)$ (in fact even $o(1/k)$), and the weak convergence of iterates $(x_k)_{k \in\N} $ to a point in $\argmin (f)$.  This convergence rate can be refined under various additional geometrical properties on the objective $f$ such as error bounds (and the closely related Kurdyka-{\L}ojasiewicz property in the convex case; see \cite{BolteKLComplexity16}).

\paragraph{Second-order in time systems: Key role of inertia.}
Second-order in time inertial dynamical systems have been introduced to provably accelerate the convergence behavior compared to \eqref{gf}. An abundant literature has been devoted to the study of inertial dynamics 
\begin{equation}\tag{$\mathrm{IGS}_{\gamma}$}\label{AVD}
\ddot{x}(t)+  \gamma (t)\dot{x}(t)  +\nabla f(x(t))=0,\quad t>t_0 .
\end{equation}
The importance of working with an asymptotically vanishing viscosity coefficient $\gamma(t)$ to obtain acceleration was stressed by several authors; see for instance \cite{cabot, engler}. Most of the literature focuses on the case $\gamma (t) =\frac{\alpha}{t}$, originating from the seminal work of \cite{su} who showed the rate of convergence $\mathcal O(1/t^2)$ of the values for $\alpha=3$, thus making the link with the Nesterov accelerated gradient method \cite{1983}. Since then, an important body of literature has been devoted to this important case and the subtle tuning of the parameter $\alpha$. Taking $\alpha \geq 3$ is mandatory for provable accelerated rate $\mathcal O \left(1/t^2 \right)$ of the values \cite{11}, and $\alpha > 3$ provides an even better rate $o \left(1/t^2 \right)$ and weak convergence of the trajectory \cite{faster1k2,may}. On the other hand, $\alpha < 3$ necessarily leads to a slower rate $\mathcal O \left(1/t^{2\alpha/3} \right)$ \cite{Apidopoulos18,ACR-subcrit}. Another remarkable instance of \eqref{AVD} corresponds to the well-known Heavy Ball with Friction (HBF) method, where $\gamma(t)$ is a constant, first introduced by Polyak in \cite{speedingup}. When the objective is strongly convex, it was shown that the trajectory converges exponentially with an optimal convergence rate for a properly chosen constant $\gamma$ \cite{heavyb}. 

\paragraph{Geometric Hessian-driven damping.}
Because of the inertial aspects, and the asymptotic vanishing viscous damping coefficient, \eqref{AVD} may exhibit many small oscillations which are not desirable from an optimization point of view. To remedy this, a powerful tool consists in introducing a geometric damping driven by the Hessian of $f$ into the dynamic. This yields the Inertial System with Explicit Hessian-driven Damping \cite{APR2,ACFR}
\begin{equation} \label{ISEHD}\tag{ISEHD}
\qquad \ddot{x}(t) + \gamma(t)\dot{x}(t) +  \beta (t) \nabla^2  f (x(t)) \dot{x} (t) + \nabla  f (x(t)) = 0,
\end{equation}
and the Inertial System with Implicit Hessian-driven Damping \cite{alecsa,MJ}
\begin{equation}
\begin{cases}\label{ISIHD}\tag{ISIHD}
\ddot{x}(t)+\gamma(t)\dot{x}(t)+\nabla f(x(t)+\beta(t)\dot{x}(t)) = 0 .
\end{cases}
\end{equation}
The rationale behind the use of the term ``implicit'' in \eqref{ISIHD} comes from a Taylor expansion of the gradient term (as $t \to +\infty$ we expect $\dot{x}(t) \to 0$). Following the physical interpretation of these ODEs, we call the non-negative parameters $\gamma$ and $\beta$ as the viscous and geometric damping parameters, respectively. 

At first glance, the presence of the Hessian in \eqref{ISEHD} may seem to entail numerical difficulties. However, this is not the case as the term $\nabla^2  f (x(t)) \dot{x} (t)$ is nothing but the time derivative of $t \mapsto \nabla  f (x(t))$. This explains why the time discretization of this dynamic provides efficient first-order algorithms \cite{ACFR}. On the other hand, \eqref{ISEHD} can be argued to be truly of second-order nature, i.e., close to Newton's and Levenberg-Marquardt's dynamics \cite{castera}. This understanding suggests that \eqref{ISIHD} may reflect the nature of first-order algorithms more faithfully than \eqref{ISEHD}. {Moreover, when it will come to our stochastic setting, which is the focus of this paper, the approach we propose will only make sense for the implicit form of the Hessian-driven damping. Indeed, in our stochastic setting, we do not have direct access to the evaluation of the gradient of $f$. Instead, we model the associated errors with a continuous It\^o martingale (denoted as $M(t)$). But for the explicit form of the Hessian-driven damping, this would entail a term involving the time derivative of $\nabla f ({X}(t))+M(t)$. This is meaningless as (non-constant) martingales are not differentiable a.s.. This is why from now on, we will solely consider \eqref{ISIHD}.
}

\subsection{Motivations}
In the following, $\H,\K$ are real separable Hilbert spaces. In many practical situations, the (sub-)gradient evaluation is subject to stochastic errors. This is for example the case if the cost per iteration is very high and thus cheap and random approximations of the (sub-)gradient are necessary. These errors can also be due to some other exogenous factor. The  continuous-time approach through stochastic differential equations (SDE) is a powerful way to model these errors in a unified way, and stochastic algorithms can then be viewed as time- discretizations. In fact, several recent works have used the SDE perspective to model SGD-type algorithms motivated by various reasons; (see \eg \cite{optch,continuous,dif,schrodinger,valid,cycle,mertikopoulos_staudigl_2018,mio,sdemodel,mio2}). The continuous-time perspective offers a deep insight and unveils the key properties of the dynamic without being tied to a specific discretization.

In this context, the first SDE that comes to mind is
\begin{equation}
    \label{CSGDintro}\tag{SGF}
    \begin{cases}
     dX(t)&=-\nabla f(X(t))+\sigma(t,X(t))dW(t), \quad t>t_0;\\
    X(t_0)&=X_0,
    \end{cases}
\end{equation}
defined over a complete filtered probability space $(\Omega,\calF, (\calF_t)_{t \geq t_0}, \mathbb{P})$, where the diffusion (volatility) term $\sigma: \R^+ \times \H \to \mathcal{L}_2(\K;\H)$ is a measurable function, $W$ is a $\calF_t$-adapted $\K$-valued Brownian motion, and the initial data $X_0$ is an $\calF_{t_0}-$measurable $\H$-valued random variable. All these notations and concepts are introduced in Section~\ref{sec:notation}. \eqref{CSGDintro} is a stochastic counterpart of \eqref{gf} where the error is modeled as a stochastic integral with respect to the measure defined by a continuous It\^{o} martingale. 

\paragraph{SDE modeling of SGD.}\label{sec:sdesgd}
To simplify the discussion, let us focus in this section on the finite-dimensional case ($\H=\R^d$, $\K=\R^m$). In various areas of science and engineering, in particular in machine learning, the Stochastic Gradient Descent (SGD) is a powerful alternative to gradient descent, and consists in replacing the full gradient computation by a cheap random version, serving as an unbiased estimator. The SGD updates the iterates according to
\begin{equation}\tag{SGD}\label{sgdd}
x_{k+1}=x_k-h(\nabla f(x_k) + e_k)
\end{equation}
where $h \in \R_+$ is the stepsize and $e_k$ is the random noise term on the gradient at the $k$-th iteration. As such, \eqref{sgdd} can be viewed as instance of the Robbins-Monro stochastic approximation algorithm \cite{rob}. When the objective takes the form $f(x) = \EE[\hat{f}(x,\xi)]$, where the expectation is w.r.t. to the random variable $\xi$ the single-batch version of SGD reads
\begin{equation}\label{sgdmini}\tag{$\mathrm{SGD_{SB}}$}
x_{k+1}=x_k-h\nabla \hat{f}(x_k,\xi_k),
\end{equation}
where $\seq{\xi_k}$ are i.i.d. random variables with the same distribution as $\xi$. Of course, \eqref{sgdmini} is an instance of \eqref{sgdd} with $e_k = \nabla \hat{f}(x_k,\xi_k) - \nabla f(x_k)$.




{The SDE continuous-time approach is motivated by its close relation to \eqref{sgdd} or \eqref{sgdmini}. We first note that when the noise $e_k$ in \eqref{sgdd} is $\mathcal{N}(0,\sigma_k I_d)$, \eqref{CSGDintro} is a better continuous-time model for \eqref{sgdd} than \eqref{gf}, as has been shown recently in \cite[Proposition~2.1]{sdemodel}. There, the sequence $\seq{x_k}$ provided by \eqref{sgdd}, with $e_k\sim\mathcal{N}(0,\sigma_k I_d)$, was proved to be accurately approximated by \eqref{CSGDintro} with $\sigma(t,X(t)) = \sqrt{h}\sigma(t)$ and $\sigma(kh)=\sigma_kI_d$. 
}

{For the standard single-batch SGD \eqref{sgdmini}, the argument is more involved. Actually, many recent works (see \eg \cite{Mandt16,optch,continuous,dif,schrodinger,valid,cycle,Xie21,latz,hessianaware}) have linked algorithm \eqref{sgdmini} with continuous-time first-order stochastic diffusion dynamics such as \eqref{CSGDintro}. These works show either empirically or theoretically under which conditions (appropriate drift and diffusion terms, regularity of $f$, etc.) \eqref{CSGDintro} can be seen as a good approximation model of \eqref{sgdmini} for fixed stepsize. By a good model we mean that \eqref{CSGDintro} is a continuum limit of \eqref{sgdmini} as the stepsize $h$ goes to zero, or equivalently that the approximation of \eqref{sgdmini} via the diffusion process \eqref{CSGDintro} is precise in some weak sense; see \cite{optch,dif,valid}. }

{As a consequence, using \eqref{CSGDintro} as a proxy of \eqref{sgdd} or \eqref{sgdmini} allows to capitalize on the wealth of results in the field of SDEs, It\^o calculus and measure theory, and this in turn opens the door to new insights in the behavior of \eqref{sgdd} or \eqref{sgdmini} and to transfer all the convergence results that one can prove for \eqref{CSGDintro} to \eqref{sgdd}. Actually, this is one of the main messages we want to convey in this work. Our motivation and results are also complementary to those in the literature. Indeed, most, if not all, of works cited in the previous paragraph are primarily motivated by the fact that continuous-time SDE approximation of \eqref{sgdd} is a crucial tool to study its escape behavior of bad saddle points (a.k.a. traps) in the non-convex smooth case. Our standpoint, which is line with \cite{mio,mertikopoulos_staudigl_2018,sdemodel,mio2}, is complementary and we argue that the continuous-time perspective offers a deep insight and unveils the key properties of the dynamic, without being tied to a specific discretization. This enlightens the behavior of the sequence generated by some specific algorithm. In turn, studying the continuous-time SDE will allow to predict the convergence behavior of stochastic algorithms seen as a discretization of the corresponding continuous-time dynamics.} 

\paragraph{Second-order SDE modeling of inertial SGD.}
Using a lifting argument to get an equivalent first-order reformulation, a natural generalization of \eqref{ISIHD} to the non-smooth case yields the differential inclusion
\begin{equation}
\begin{cases}\label{ISIHDN}\tag{$\mathrm{ISIHD_{NS}}$}
\begin{aligned}
\dot{x}(t)&=v(t), \quad t>t_0; \\
\dot{v}(t)&\in -[\gamma(t)v(t)+\partial F(x(t)+\beta(t)v(t))], \quad t>t_0;\\
x(t_0)&=x_0,\quad \dot{x}(t_0)=v_0,
\end{aligned}
\end{cases}
\end{equation}
where $\partial F$ is the convex subdifferential of $F$. In this setting, keeping in mind that we want to give a rigorous meaning to \eqref{ISIHDN}, we can model the associated errors using a stochastic integral with respect to the measure defined by a continuous It\^o martingale. This entails the following stochastic differential inclusion (SDI for short), which is the stochastic counterpart of \eqref{ISIHDN}:
\begin{equation}\label{ISIHDN-S}\tag{$\mathrm{S-ISIHD_{NS}}$}
\begin{cases}
d{X(t)}&={V(t)}dt,  \nonumber\\
d{V(t)}&\in -\gamma(t){V(t)}dt-\partial F({X(t)}+\beta(t){V(t)})dt+\sigma(t,{X(t)}+\beta(t){V(t)})dW(t),\\
{X(t_0)}&={X_0}, \quad {V(t_0)}={V_0}. \nonumber
\end{cases}
\end{equation}
When $g \equiv 0$, we recover the stochastic counterpart of \eqref{ISIHD} as the following SDE
\begin{equation}\label{ISIHD-S}\tag{$\mathrm{S-ISIHD}$}
\begin{cases}
d{X(t)}&={V(t)}dt,  \nonumber\\
d{V(t)}&= -\gamma(t){V(t)}dt-\nabla f({X(t)}+\beta(t){V(t)})dt+\sigma(t,{X(t)}+\beta(t){V(t)})dW(t),\\
{X(t_0)}&={X_0}, \quad {V(t_0)}={V_0} .
\end{cases}
\end{equation}
In the smooth but non-convex case, and motivated again by strict saddle points avoidance, the authors in \cite{Hu17,hbnoise} study the convergence behavior of a stochastic discrete heavy ball method from its approximating SDE. The latter is a randomly perturbed nonlinear oscillator in \cite{Hu17} and a coupled system of nonlinear oscillators in \cite{hbnoise}. These SDEs are very different from the ones considered here. 

Extending what we have discussed above for \eqref{CSGDintro} as a good model of \eqref{sgdd}, we will show in Proposition~\ref{prop:isgfconsist} (see Section~\ref{sec:sdeode} for details) that \eqref{ISIHD-S} is a good model of the natural stochastic inertial algorithm \eqref{stochnest} obtained by simple Euler forward (a.k.a. Euler-Maruyama) discretization of \eqref{ISIHD-S}. The corresponding convergence rate as a function of the time step-size $h$ is $\mathcal{O}(h)$ and show that this is much better that the approximation rate of \eqref{ISIHD} which is only $\mathcal{O}(\sqrt{h})$. As a consequence, this justifies and motivates the continuous-time dynamics \eqref{ISIHD-S} (and \eqref{ISIHDN-S}) as a good proxy of stochastic inertial algorithms and opens the door to new insights in the behavior of such algorithms, and that their convergence properties can be easily derived from those of \eqref{ISIHD-S} with minimal effort. For instance, when $f$ is smooth with Lipschitz-continuous gradient, it is easy to see from the descent lemma that
\[
\EE[f(X_k) - \min f] = \EE[f(X(kh)) - \min f] + \calO(h),
\]
where $X_k$ are the iterates of \eqref{stochnest} and $X(t)$ the solution to \eqref{ISIHD-S}. This means that any rate proved on $\EE[f(X(t))-\min f]$ can be directly transferred to $\EE[f(X_k)-\min f]$.
}

\tcm{
\subsection{Objectives and Contributions}
In this work, our goal is to provide a general mathematical framework for analyzing the convergence properties of \eqref{ISIHD-S} and \eqref{ISIHDN-S}. In this context, considering inertial dynamics with a time-dependent vanishing viscosity coefficient $\gamma$ is a key ingredient to obtain fast convergent methods. We will develop a systematic and unified way that transfers the properties of stochastic first-order dynamics recently studied by  \cite{mio,mio2} to second-order ones. Our program will then rely on two pillars: {\textit{time scaling}} and {\textit{averaging}}, following the methodology recently developed in \cite{fast} for the deterministic case.

More precisely, we study the stochastic dynamics \eqref{ISIHDN-S} and its long-time behavior in order to solve \eqref{P}. We conduct a new analysis using specific and careful arguments that are much more involved than in the deterministic case. To get some intuition, keeping the discussion informal at this stage, let us first identify the assumptions needed to expect that the position state of \eqref{ISIHD-S} ``approaches'' $\argmin (f)$ in the long run. In the case where $\H=\K$, $\gamma(\cdot)\equiv\gamma>0,\beta\equiv 0,$ and $\sigma=\tilde{\sigma} I_{\H}$, where $\tilde{\sigma}$ is a positive real constant. Under mild assumptions one can show that \eqref{ISIHD-S} has a unique invariant distribution $\pi_{\tilde{\sigma}}$ in $(x,v)$ with density proportional to $ \exp\pa{-\frac{2\gamma}{\tilde{\sigma}^2}\left(f(x)+\frac{\Vert v\Vert^2}{2}\right)}$, see e.g., \cite[Proposition~6.1]{pavliotis}. 
Clearly, as $\tilde{\sigma} \to 0^+$, $\pi_{\tilde{\sigma}}$ gets concentrated around $\argmin (f)\times \{0_{\H}\}$, with $\lim_{\tilde{\sigma} \to 0^+} \pi_{\tilde{\sigma}}(\argmin (f) \times  \{0_{\H}\}) = 1$; see also Section~\ref{subsec:relwork} for further discussion. Motivated by these observations and the fact that we aim to exactly solve \eqref{P}, our paper will then mainly focus on the case where $\sigma(\cdot,x)$ vanishes fast enough as $t \to +\infty$ uniformly in $x$.
}

Our main contributions are summarized as follows:
\begin{itemize}
\item We show almost sure weak convergence of the trajectory (see Theorem~\ref{trajectory}) and convergence rates (see Theorem~\ref{maxai}) in expectation in the case of time-dependent coefficients $\gamma(t)$ and a proper choice of $\beta(t)$ (depending on $\gamma$). For this analysis, we transfer the results from the Lyapunov analysis of the first-order in-time stochastic (sub-)gradient system studied in \cite{mio,mio2} from which our inertial system is built through time scaling and averaging.
\item We obtain almost sure and ergodic convergence results which correspond precisely to the best-known results in the deterministic case. In particular, if we let $\alpha>3, \gamma(t)=\frac{\alpha}{t}, \beta(t)=\frac{t}{\alpha-1}$, then under appropriate assumptions on the diffusion (volatility) term $\sigma$, we obtain the rate of convergence $o(1/t^2)$ of the values as well as fast convergence of the gradient in almost sure sense (see Corollary~\ref{corat}). \tcm{This corresponds to the best known results of second-order in-time Hessian driven damping dynamics in the deterministic case. 
\item As far as the interplay between viscous damping, geometric damping, and noise level, our results reveal that viscous damping, which allows for acceleration, enters in the control of the noise and may impose a more stringent summability condition on the diffusion coefficient (see e.g., Theorem~\ref{trajectory}, Theorem~\ref{maxai} and Corollary~\ref{corat}). The geometric damping, which is designed to reduce oscillations, does not enter directly in this control and can be chosen in a more flexible way.}
\item We then turn to providing a local analysis with a local linear convergence rate under the Polyak-{\L}ojasiewicz inequality (See Theorem \ref{crpolyak}). This is much more challenging in the stochastic case, and even more for second-order systems, as localizing the process in this case is very delicate. \tcm{Leveraging time scaling and averaging offers an elegant framework to achieve such a local convergence analysis while it is barely possible otherwise. To the best of our knowledge, this is the first result of this kind for these systems in the literature.}
\item We also show almost sure strong convergence of the trajectory to the minimal norm solution when adding a Tikhonov regularization to our systems (see Theorem \ref{trajectory1}). Moreover, we show convergence rates in expectation for the objective and the trajectory for a particular Tikhonov regularizer (see Theorem~\ref{importante1}).
\end{itemize}

It is worth observing that since our approach is based on an averaging technique, it will involve Jensen's inequality at some point to get fast convergence rates. In this respect, the convexity assumption on the objective function appears unavoidable, at least in this proof strategy. It is also worth stressing the fact that the approach only makes sense for the implicit form of the Hessian-driven damping. {Indeed, as explained above, the explicit form of the Hessian-driven damping has a term involving the time derivative of the (sub)gradient at the trajectory. As the noise, modeled here as an It\^o martingale, in practice stems from the (sub)gradient evaluation, this time derivative is meaningless with explicit Hessian-driven damping, as (non-constant) martingales are a.s. non-differentiable.}


\subsection{Relation to prior work}\label{subsec:relwork}

\paragraph{Kinetic diffusion dynamics for sampling.} Let us consider \eqref{ISIHD-S} in the case where $\H=\K=\R^n$, $\gamma(t)=\gamma>0, \beta\equiv 0$, and $\sigma=\sqrt{2\gamma}I$. Then one recovers the kinetic Langevin diffusion (or second-order Langevin process). In this case, the continuous-time Markov process $(X(t),V(t))$ is positive recurrent and has a unique invariant distribution which has the density $\propto \exp\left(-f(x)-\frac{\|v\|^2}{2}\right)$ with respect to the Lebesgue measure on $\R^{2n}$. Time-discretized versions of this Langevin diffusion process have been studied in the literature to (approximately) sample from $\propto \exp(-f(x))$ with asymptotic and non-asymptotic convergence guarantees in various topologies and under various conditions; see \cite{underdamped,isthere,Dalalyan2019BoundingTE} and references therein. \tcm{By rescaling the problem, relation between sampling and optimization with quantitive estimates has been investigated in \eg \cite{user} for the strongly convex case.}


\tcm{
\paragraph{First-order stochastic (sub)gradient systems.} In \cite{mio} (resp. \cite{mio2}), the authors studied \eqref{CSGDintro} (resp. its non-smooth version as an SDI) as a proxy for \eqref{sgdd}. One of their main results is the almost sure weak convergence of the trajectory to the set of minimizers under integrability conditions on $\sigma$, as well as convergence rates in expectation and in almost sure sense of the dynamic under different geometries of $f$. Our goal here is to take these results to second-order inertial dynamics featuring both viscous and geometric dampings. This turns out to be a challenging task. We will show that the convergence rate on the objective can be achieved provided that the noise vanishes sufficiently fast.
}

\paragraph{Inexact inertial gradient systems.} There is an abundant literature regarding the dynamics \eqref{ISIHD} and \eqref{ISEHD}, either in the exact case or with errors but only deterministic ones; see \cite{alecsa, hessianpert,27,35,AttouchCabot18,11,20,34,37,ABCR,ACFR,10, casterainertial}. 
Only a few papers have been devoted to studying stochastic versions of \eqref{AVD}, either with vanishing damping $\gamma(t)=\alpha/t$ or constant damping $\gamma(t)$ (stochastic HBF); see e.g. \cite{gadat1, Gadat2018, sdemodel, rolememory}. \tcm{One of the advantages of considering the stochastic version of \eqref{ISIHD} is the possible reduction of oscillations thanks to the introduction of the geometric damping. We show that this effect could be preserved in the stochastic setting, hence obtaining more stable trajectory solutions than other stochastic second-order dynamics. Additionally, having the flexibility to choose the viscous damping would eventually allow us to achieve faster convergence results}. For a stochastic version of \eqref{AVD}, \cite{sdemodel} provide asymptotic $\mathcal{O}(1/t^2)$ convergence rate on the objective values in expectation under integrability conditions on the diffusion term as well as other rates under additional geometrical properties of the objective. These geometrical assumptions come in the form of {\textit{global}} growth and flatness of the objective which is very restrictive. Rather, here, our geometrical assumptions on $f$ will be only local. The corresponding stochastic algorithms for these two choices of $\gamma$, whose mathematical formulation and analysis are simpler, have been the subject of active research work; see \eg \cite{Lin2015,Frostig2015,Jain2018,AR,AZ,Yang2016,Gadat2018,Loizou2020,Laborde2020,LanBook,Sebbouh2021,Driggs22,Wu24,AFKstochastic24}.

\paragraph{Time scaling and averaging.} 
The authors in \cite{fast} proposed time scaling and averaging to link \eqref{gf} and \eqref{ISIHD} with a general viscous damping function $\gamma$ and a properly adjusted geometric damping function $\beta$ (related to $\gamma$). Our aim is to extend the results of \cite{fast} to the stochastic case. Leveraging these techniques with a general function $\gamma$ and an appropriate $\beta$, we will be able to transfer all the results we obtained in \cite{mio2} for a first-order SDI to the second-order one \eqref{ISIHDN-S}. This avoids in particular to go through an intricate and a dedicated Lyapunov analysis for \eqref{ISIHDN-S}. A local convergence analysis becomes also easily accessible through this lens while it is barely possible otherwise. We also specialize our results to the standard case where $\gamma(t)=\frac{\alpha}{t}$ and $\beta(t)=\frac{t}{\alpha-1}$. 



\smallskip

The idea of passing from a first-order system to a second-order one via time scaling is not new. In the smooth case ($g \equiv 0$), the author of \cite{cabott} propose time scaling and a tricky change of variables to show that \eqref{AVD} is equivalent to an averaged gradient system, \ie the steepest gradient system \eqref{gf} where the instantaneous value of $\nabla f(x(t))$ is replaced by some average of the gradients $\nabla f(x(s))$ over all past positions $s \leq t$. See also \cite{Goudou05} for more general gradient systems with memory terms involving kernels. This gives rise to an integro-differential equation whose asymptotic behavior and the equivalent second-order dynamic have been investigated in \cite{cabott}. A stochastic version of this integro-differential equation has been studied in \cite{gadat1} where the long time behavior of the resulting process, in particular its invariant distribution and occupation measure, was investigated under ellipticity assumptions on $f$ and $\sigma$ and proper behavior of the averaging gradient function. Clearly, the motivation of that work is not on the minimizing properties of the process while it is our focus here.


\subsection{Organization of the paper}
Section~\ref{sec:notation} introduces notations, definitions and preliminaries that are essential to our exposition. Section~\ref{sec:scaling} is the main part of our study. We develop the passage from the first-order system to the second-order inertial system by using the time scaling and averaging in a stochastic framework. Almost sure and ergodic convergence rates are provided under different geometric properties of the objective function, such as convexity and Polyak-{\L}ojasiewicz geometry. Finally, we show a strong convergence result when adding a Tikhonov regularization. Additional technical results that are needed throughout the paper are gathered in Appendix~\ref{aux}.

\section{Notation and Preliminaries}\label{sec:notation}
\subsection{Notation}
We will use the following shorthand notations:  Given $n\in\N$,  $[n]\eqdef \{1,\ldots,n\}$. {Consider $\H,\K$ real separable Hilbert spaces endowed with the inner product $\langle\cdot,\cdot\rangle_{\H}$ and $\langle\cdot,\cdot\rangle_{\K}$, respectively, and norm $\Vert \cdot\Vert_{\H}=\sqrt{\langle \cdot,\cdot \rangle_{\H}}$ and $\Vert \cdot\Vert_{\K}=\sqrt{\langle \cdot,\cdot \rangle_{\K}}$, respectively (we will omit the subscripts $\H$ and $\K$ whenever it is clear from the context). $I_{\H}$ is the identity operator from $\H$ to $\H$. $\calL(\K;\H)$ is the space of bounded linear operators from $\K$ to $\H$, $\calL_1(\K)$ is the space of trace-class operators, and $\calL_2(\K;\H)$ is the space of bounded linear Hilbert-Schmidt operators from $\K$ to $\H$}. For $M\in\calL_1(\K)$, the trace is defined by
\[
\tr(M)\eqdef \sum_{i\in I} \langle Me_i,e_i\rangle<+\infty,
\]
where $I\subseteq \N$ and $(e_i)_{i\in I}$ is an orthonormal basis of $\K$. Besides, for $M\in\calL(\K;\H)$, $M^{\star}\in\calL(\H;\K)$ is the adjoint operator of $M$, and for $M\in\calL_2(\K;\H)$, 
\[
\norm{M}_{\mathrm{HS}}\eqdef \sqrt{\tr(MM^{\star})}<+\infty
\] 
is its Hilbert-Schmidt norm (in the finite-dimensional case is equivalent to the Frobenius norm). {We denote by $\wlim$ (resp. $\slim$) the limit for the weak (resp. strong) topology of $\H$}. The notation $A: \H\rightrightarrows \H$ means that $A$ is a set-valued operator from $\H$ to $\H$. For $f:\H\rightarrow\R$, the sublevel of $f$ at height $r\in\R$ is denoted $[f\leq r]\eqdef \{x\in{\H}: f(x)\leq r\}$. For $1 \leq p \leq +\infty$, $\Lp^p([a,b])$ is the space of measurable functions $g:\R\rightarrow\R$ such that $\int_a^b|g(t)|^p dt<+\infty$, with the usual adaptation when $p = +\infty$. On the probability space $(\Omega,\calF,\PP)$, $\Lp^p(\Omega;\H)$ denotes the (Bochner) space of $\H$-valued random variables whose $p$-th moment (with respect to the measure $\PP$) is finite. Other notations will be explained when they first appear.

Let us recall some important definitions and results from convex analysis; for a comprehensive coverage, we refer the reader to \cite{rocka}.

\smallskip

We denote by $\Gamma_0(\H)$ the class of proper lsc and convex functions on $\H$ taking values in $\Rinf$.
For $\mu > 0$, $\Gamma_{\mu}(\H) \subset \Gamma_0(\H)$ is the class of $\mu$-strongly convex functions, \ie, functions $f$ such that $f-\frac{\mu}{2}\Vert\cdot\Vert^2$ is convex. We denote by $C^{s}(\H)$ the class of $s$-times continuously differentiable functions on $\H$. 
For $L \geq 0$, $C_L^{1,1}(\H) \subset C^{1}(\H)$ is the set of functions on $\H$ whose gradient is $L$-Lipschitz continuous, {and $C_L^2(\H)$ is the subset of $C_L^{1,1}(\H)$ whose functions are twice differentiable}.

The \textit{subdifferential} of a function $f\in\Gamma_0(\H)$ is the set-valued operator $\partial f:\H \rightrightarrows \H$ such that, for every $x$ in $\H$,
\[
\partial f(x)=\{u\in\H:f(y)\geq f(x) + \dotp{u}{y-x} \qforallq y\in\H\},
\]
which is non-empty for every point in the relative interior of the domain of $f$. When $f$ is finite-valued, then $f$ is continuous, and $\partial f(x)$ is a non-empty convex and compact set for every $x\in \H$. If $f$ is differentiable, then $\partial f(x)=\{\nabla f(x)\}$. For every $x\in \H$ such that $\partial f(x) \neq \emptyset$, the minimum norm selection of $\partial f(x)$ is the unique element $\{\partial^0 f(x)\} \eqdef \argmin_{u\in \partial f(x)}\norm{u}$. The projection of a point $x\in \H$ onto a non-empty closed convex set $C\subseteq \H$ is denoted by $\proj_C(x)$.

\subsection{\texorpdfstring{\tcm{Assumptions on volatility and damping parameters}}{Assumptions on volatility and damping parameters}}
Recall that our focus in this paper is on an optimization perspective, and as we argued in the introduction, we will study the long time behavior of our SDEs and SDIs (in particular \eqref{ISIHD-S} and \eqref{ISIHDN-S}) as the diffusion term vanishes when $t \to +\infty$. Therefore, throughout the paper, we assume that the diffusion (volatility) term $\sigma$ satisfies:
\begin{equation*}
\tag{$\mathrm{H}_{\sigma}$}\label{H}
\begin{cases}
\sup_{t \geq t_0,x \in \H} \Vert\sigma(t,x)\Vert_{\HS}<+\infty, \\
\Vert\sigma(t,x')-\sigma(t,x)\Vert_{\HS}\leq l_0\norm{x'-x},
\end{cases}
\end{equation*}   
for some $l_0>0$ and for all  $t\geq t_0, x, x'\in \H$. The Lipschitz continuity assumption is mild and classical and will be required to ensure the well-posedness of \eqref{ISIHD-S} and \eqref{ISIHDN-S}. Let us also define $\sigma_{\infty}:[t_0,+\infty[\rightarrow \R_+$ as
\[
\sigma_{\infty}(t)\eqdef \sup_{x\in\H}\Vert\sigma(t,x)\Vert_{\HS}.
\]
\begin{remark}\label{sigma*}
\eqref{H} implies the existence of $\sigma_*>0$ such that: $$\Vert\sigma(t,x)\Vert_{\HS}^2=\tr[\Sigma(t,x)]\leq \sigma_*^2,$$ 
for all $t\geq t_0, x\in \H$, where $\Sigma\eqdef \sigma\sigma^{\star}$.
\end{remark} 
For $t_0> 0$, let $\gamma:[t_0,+\infty[\rightarrow \R_+$ be a viscous damping and define: 
\begin{equation*}
p(t)\eqdef \exp\left(\int_{t_0}^t \gamma(s)ds\right), \quad  \Gamma(t)\eqdef p(t)\int_t^{\infty}\frac{ds}{p(s)} 
\qandq I[h](t) \eqdef \exp\left(-\int_{t_0}^t\frac{du}{\Gamma(u)}\right)\int_{t_0}^t h(u)\frac{\exp\left(\int_{t_0}^u\frac{ds}{\Gamma(s)}\right)}{\Gamma(u)}du.
\end{equation*}
We assume that
\begin{align}\tag{$\mathrm{H}_\gamma$}\label{H1}
\begin{cases}
    \text{$\gamma$ is upper bounded by a non-increasing function for every $t\geq t_0$;}\\
     \int_{t_0}^{\infty}\frac{ds}{p(s)}<+\infty.
\end{cases}
\end{align}

\begin{remark}\label{rem:Gamma}
Let us notice that $\Gamma$ satisfies the relation $\Gamma'=\gamma\Gamma-1.$
\end{remark}


\tcm{
\subsection{Reminder of main previous results} 
Before we delve into our core contributions, it is important to note that we will require some specific results gleaned from \cite{mio,mio2}. These are the subject of the following paragraphs.
}

\paragraph{Results on \eqref{CSGDintro}.} 
Since we are going to show results in the smooth case, we rewrite \eqref{H0} when $g \equiv 0$, 
\begin{align}\label{H01}
\begin{cases}
\text{$f:\H\rightarrow\R$ is continuously differentiable and convex with $L$-Lipschitz continuous gradient}; \\
\calS\eqdef \argmin (f)\neq\emptyset. \tag{$\mathrm{H}_0^{'}$} 
\end{cases}
\end{align}

\begin{theorem}[{\cite[Theorem~3.1]{mio}}]\label{3.1}
Consider $f$ and $\sigma$ that satisfy Assumptions~\eqref{H01} and \eqref{H}. Let $\nu\geq 2$ and consider the SDE:
\begin{equation}\label{a1}
\begin{cases}
\begin{aligned}
    dX(t)&= -\nabla f(X(t))dt+\sigma(t,X(t))dW(t),\\
    X(t_0)&=X_0,  
\end{aligned}
\end{cases}
\end{equation}
where $X_0\in \Lp^{\nu}(\Omega;\H)$. Then, there exists a unique solution $X\in S_{\H}^{\nu}[t_0]$ (see Section~\ref{onstochastic} for the notation) of \eqref{a1}.  Additionally, if $\sigma_{\infty}\in \Lp^2([t_0,+\infty[)$, then: 
\begin{enumerate}[label=(\roman*)]
\item \label{acota} $\sup_{t\geq 0}\EE[\norm{ X(t)}^2]<+\infty$.
\vspace{1mm}
\item  $\forall x^{\star}\in \calS$, $\lim_{t\rightarrow +\infty} \norm{ X(t)-x^{\star}}$ exists a.s. and $\sup_{t\geq 0}\norm{ X(t)}<+ \infty$ a.s..
\vspace{1mm}
\item \label{iiconv} 
$\lim_{t\rightarrow \infty}\norm{\nabla f(X(t))}=0$ a.s..  
As a result, $\lim_{t\rightarrow \infty} f(X(t))=\min f$ a.s..
\vspace{1mm}
\item  There exists an $\calS$-valued random variable $X^{\star}$ such that $\wlim_{t\rightarrow +\infty} X(t) = X^{\star}$ a.s..
 \end{enumerate}
\end{theorem}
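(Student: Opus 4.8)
\medskip
\noindent\textbf{Proof proposal.}
The plan is to organize everything around two anchor (Lyapunov) functions, a continuous-time Robbins--Siegmund lemma, and a stochastic Opial lemma. \emph{Existence and uniqueness} in $S_{\H}^{\nu}[t_0]$ follow from standard SDE theory: by \eqref{H01} the drift $-\nabla f$ is globally $L$-Lipschitz, and by \eqref{H} the diffusion $\sigma(t,\cdot)$ is Lipschitz with at most linear growth, so the classical It\^o existence--uniqueness theorem applies and the $\nu$-th moment bounds defining $S_{\H}^{\nu}[t_0]$ come from the Burkholder--Davis--Gundy inequality and a Gr\"onwall argument. I would also record here the finiteness of $\EE[\norm{X(t)}^2]$ on bounded time intervals, which is what makes the stochastic integrals below genuine martingales and legitimizes taking expectations. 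Now fix $x^{\star}\in\calS$ and set $h(t)\eqdef \tfrac12\norm{X(t)-x^{\star}}^2$. It\^o's formula gives
\begin{equation*}
dh(t)=-\dotp{X(t)-x^{\star}}{\nabla f(X(t))}\,dt+\tfrac12\norm{\sigma(t,X(t))}_{\HS}^2\,dt+\dotp{X(t)-x^{\star}}{\sigma(t,X(t))\,dW(t)}.
\end{equation*}
Using convexity of $f$ and $\nabla f(x^{\star})=0$ to bound $\dotp{X(t)-x^{\star}}{\nabla f(X(t))}\ge f(X(t))-\min f\ge 0$, together with $\norm{\sigma(t,X(t))}_{\HS}^2\le\sigma_{\infty}(t)^2$, I obtain $dh(t)\le -(f(X(t))-\min f)\,dt+\tfrac12\sigma_{\infty}(t)^2\,dt+dM(t)$ with $M$ a martingale. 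Since $\sigma_{\infty}\in\Lp^2([t_0,+\infty[)$, this is exactly the setting of a continuous-time Robbins--Siegmund lemma, from which I read off at once that $\lim_{t\to+\infty}\norm{X(t)-x^{\star}}$ exists a.s.\ (hence $\sup_t\norm{X(t)}<+\infty$ a.s.) and that $\int_{t_0}^{+\infty}(f(X(t))-\min f)\,dt<+\infty$ a.s. Taking expectations in the integrated inequality and dropping the nonnegative value term yields $\sup_t\EE[\norm{X(t)}^2]<+\infty$. This settles (i) and (ii).

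\medskip
For the pointwise statement (iii) I would run a second Lyapunov analysis on $V(t)\eqdef f(X(t))-\min f\ge 0$. Applying It\^o's formula to $f$ and bounding the correction term by $\tfrac12\tr[\nabla^2 f(X(t))\,\sigma\sigma^{\star}(t,X(t))]\le \tfrac{L}{2}\sigma_{\infty}(t)^2$ (using $\norm{\nabla^2 f}\le L$) leads to $dV(t)\le -\norm{\nabla f(X(t))}^2\,dt+\tfrac{L}{2}\sigma_{\infty}(t)^2\,dt+d\widetilde M(t)$. A second invocation of the Robbins--Siegmund lemma shows that $V(t)$ converges a.s.\ and that $\int_{t_0}^{+\infty}\norm{\nabla f(X(t))}^2\,dt<+\infty$ a.s. Because the first analysis already gave $\int_{t_0}^{+\infty}V(t)\,dt<+\infty$ a.s., the a.s.\ limit of $V(t)$ can only be $0$, i.e.\ $f(X(t))\to\min f$ a.s.; the descent inequality $\tfrac{1}{2L}\norm{\nabla f(x)}^2\le f(x)-\min f$ for $L$-smooth convex $f$ then forces $\norm{\nabla f(X(t))}\to 0$ a.s. The one delicate point here is that $f$ is only $C^{1,1}$, so I would justify the use of It\^o's formula either by mollifying $f$ and passing to the limit, or by invoking an It\^o formula valid for $C^{1,1}$ functions.

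\medskip
Finally, (iv) follows from a stochastic Opial lemma, whose two hypotheses are now available: the existence of $\lim_t\norm{X(t)-x^{\star}}$ a.s.\ for each fixed $x^{\star}\in\calS$, and the fact that every weak cluster point $\bar x$ of the a.s.\ bounded trajectory lies in $\calS$ --- indeed, along $t_n\to+\infty$ with $X(t_n)\rightharpoonup\bar x$, weak lower semicontinuity of the convex continuous $f$ gives $f(\bar x)\le\liminf_n f(X(t_n))=\min f$. The main obstacle, and the step I expect to require the most care, is the measurability and null-set bookkeeping: the full-measure event on which $\lim_t\norm{X(t)-x^{\star}}$ exists depends a priori on $x^{\star}$, so to run Opial on a single event of full probability I would fix a countable dense subset of $\calS$ (using separability of $\H$), obtain the limit simultaneously over that subset, and extend to all of $\calS$ by density together with the a.s.\ boundedness of $X$. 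This produces an $\calS$-valued random variable $X^{\star}$ with $\wlim_{t\to+\infty}X(t)=X^{\star}$ a.s.
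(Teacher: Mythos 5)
Your reconstruction is, for most of the statement, the same argument as in the cited source (note that the present paper does not reprove Theorem~\ref{3.1}: it is imported verbatim from \cite{mio}, so the comparison is with that source's strategy): existence and uniqueness in $S_{\H}^{\nu}[t_0]$ from Lipschitz drift/diffusion plus BDG--Gr\"onwall moment bounds; items (i)--(ii) from It\^o's formula applied to $\frac12\norm{X(t)-x^{\star}}^2$, convexity, and the nonnegative-semimartingale convergence theorem (your continuous-time Robbins--Siegmund step, which is exactly Mao's Theorem~1.3.9); and item (iv) from a stochastic Opial lemma run on a single full-measure event built from a countable dense subset of $\calS$. All of that is correct and matches.

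The genuine gap is in your item (iii). Your second Lyapunov analysis applies It\^o's formula to $f$ itself, so the correction term $\frac12\tr[\nabla^2 f(X(t))\,\sigma\sigma^{\star}]$ only makes sense if $f\in C^2$, whereas the theorem assumes only $f\in C^{1,1}_L(\H)$ on a possibly infinite-dimensional $\H$. Neither of your proposed repairs closes this: mollification has no standard meaning in infinite dimensions (no Lebesgue measure, no mollifiers), and It\^o formulas for $C^{1,1}$ functions are essentially finite-dimensional technology. The paper itself signals that this step must be avoided: Theorem~\ref{3.4}, whose proof does apply It\^o to $f$, explicitly adds the hypothesis ``either $\H$ is finite-dimensional or $f\in C^2(\H)$'', while Theorem~\ref{3.1} does not. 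The repair is small and uses only ingredients you already have. From your first analysis, $\int_{t_0}^{+\infty}(f(X(t))-\min f)\,dt<+\infty$ a.s.; the descent inequality $f(x)-\min f\geq \frac{1}{2L}\norm{\nabla f(x)}^2$ (which you invoke at the very end anyway) then gives $\int_{t_0}^{+\infty}\norm{\nabla f(X(t))}^2\,dt<+\infty$ a.s.\ with no second It\^o application. To upgrade integrability to convergence, use a Barbalat-type argument: the martingale part $\int_{t_0}^{\cdot}\sigma\,dW$ is an $L^2$-bounded continuous martingale (since $\sigma_{\infty}\in\Lp^2([t_0,+\infty[)$), hence a.s.\ convergent, hence a.s.\ uniformly continuous on $[t_0,+\infty[$, while the drift increment over $[s,t]$ is bounded via Cauchy--Schwarz by $\sqrt{t-s}\,\bigl(\int_{s}^{+\infty}\norm{\nabla f(X(u))}^2du\bigr)^{1/2}$; so the paths of $X$, and by Lipschitz continuity of $\nabla f$ those of $\nabla f(X(\cdot))$, are a.s.\ uniformly continuous, and a nonnegative, uniformly continuous, integrable function tends to zero. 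This yields $\norm{\nabla f(X(t))}\to 0$ a.s., and then the values follow from the gradients, $f(X(t))-\min f\leq \norm{\nabla f(X(t))}\,\norm{X(t)-x^{\star}}\to 0$ a.s.\ by the a.s.\ boundedness in (ii) --- which is the order of deduction indicated by the ``As a result'' in the statement, the reverse of yours.
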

 
\begin{theorem}[{\cite[Theorem~3.4]{mio}}]\label{3.4}
Let $\nu\geq 2$ and consider the SDE \eqref{a1} with initial data $X_0\in\Lp^{\nu}(\Omega;\H)$, where $f$ and $\sigma$ satisfy Assumptions \eqref{H0} and \eqref{H}. Moreover, we assume that $\sigma$ satisfies  $t\mapsto t\sigma_{\infty}^2(t)\in \Lp^1([t_0,+\infty[)$ and that {either $\H$ is finite-dimensional or $f\in C^2(\H)$}. Then, the solution trajectory $X \in S_{\H}^{\nu}[t_0]$ is unique and we have that:
\begin{enumerate}[label=(\roman*)]
\item $\EE[f(X(t))-\min f]=\mathcal{O}(t^{-1}).$
\end{enumerate}
 Moreover, if $f\in C^2(\H)$, then the following hold:
\begin{enumerate}[label=(\roman*),resume]
    \item $t\mapsto t\Vert \nabla f(X(t))\Vert^2\in \Lp^1([t_0,+\infty[)$ a.s..
    \item $f(X(t))-\min f=o(t^{-1})$ a.s..
\end{enumerate}
\end{theorem}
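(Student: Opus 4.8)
The plan is to run a stochastic Lyapunov analysis built on the time-weighted energy
\[
E(t) = t\big(f(X(t)) - \min f\big) + \tfrac{1}{2}\|X(t) - x^\star\|^2, \qquad x^\star \in \calS \text{ fixed},
\]
which is the natural stochastic counterpart of the function used to obtain the $\mathcal{O}(1/t)$ value rate for the deterministic gradient flow. Writing $w(t) \eqdef f(X(t)) - \min f \geq 0$, I would apply It\^o's formula to $E$ along \eqref{a1}. Using $\nabla^2\big(\tfrac12\|\cdot - x^\star\|^2\big) = I_{\H}$ and the convexity inequality $\langle X(t) - x^\star, \nabla f(X(t))\rangle \geq w(t)$, the drift of $E$ collapses to
\[
-\,t\|\nabla f(X(t))\|^2 + \tfrac{t}{2}\tr[\nabla^2 f(X(t))\Sigma(t,X(t))] + \tfrac12\tr[\Sigma(t,X(t))],
\]
and since $\nabla f$ is $L$-Lipschitz one has $0 \preceq \nabla^2 f \preceq L\, I_{\H}$, so both trace terms are dominated by $\big(\tfrac{tL}{2}+\tfrac12\big)\sigma_{\infty}^2(t)$. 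The hypothesis ``$\H$ finite-dimensional or $f \in C^2(\H)$'' is exactly what licenses It\^o's formula here (in the finite-dimensional $C^{1,1}$ case one passes through a mollification of $f$ and uses that $\nabla^2 f$ exists a.e.\ with $\nabla^2 f \preceq L\, I_{\H}$).

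For item (i), integrating the It\^o identity and taking expectations removes the stochastic integral (after a standard localization/Fatou argument using the a.s.\ boundedness of $X$ from Theorem~\ref{3.1}), yielding
\[
\EE[E(t)] + \int_{t_0}^t s\,\EE[\|\nabla f(X(s))\|^2]\,ds \leq \EE[E(t_0)] + \int_{t_0}^{\infty}\Big(\tfrac{sL}{2}+\tfrac12\Big)\sigma_{\infty}^2(s)\,ds.
\]
The assumption $t \mapsto t\sigma_{\infty}^2(t)\in\Lp^1([t_0,+\infty[)$ (which also forces $\sigma_{\infty}^2 \in \Lp^1$, since $t \geq t_0 > 0$ gives $\sigma_{\infty}^2(t) \leq t_0^{-1} t\sigma_{\infty}^2(t)$) makes the right-hand side a finite constant, so $\EE[E(t)]$ is bounded; as $E(t) \geq t\,w(t)$, this gives $\EE[w(t)] = \mathcal{O}(1/t)$. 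For item (ii), I would read the \emph{same} It\^o identity pathwise: the process $E(t) + \int_{t_0}^t s\|\nabla f(X(s))\|^2 ds - \int_{t_0}^t(\tfrac{sL}{2}+\tfrac12)\sigma_{\infty}^2(s)ds$ is a continuous local supermartingale bounded below (by $-\int_{t_0}^{\infty}(\tfrac{sL}{2}+\tfrac12)\sigma_{\infty}^2$, using $E \geq 0$), hence converges a.s.; since its deterministic correction converges and $E \geq 0$, the increasing process $\int_{t_0}^t s\|\nabla f(X(s))\|^2 ds$ has a finite limit a.s., which is (ii).

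For item (iii), I would show that $t\,w(t)$ converges a.s.\ and then identify the limit as $0$. Two ingredients: first, applying It\^o to $\tfrac12\|X(t)-x^\star\|^2$ alone produces a local supermartingale bounded below whose convergence forces $\int_{t_0}^{\infty} w(s)\,ds < +\infty$ a.s.; second, applying It\^o to $t\,w(t)$ gives the decomposition $t\,w(t) = A(t) + N(t)$, where the finite-variation part
\[
A(t) = t_0 w(t_0) + \int_{t_0}^t w\,ds - \int_{t_0}^t s\|\nabla f\|^2\,ds + \tfrac12\int_{t_0}^t s\,\tr[\nabla^2 f\,\Sigma]\,ds
\]
converges a.s.\ (each summand is monotone and bounded by (ii) and the two integrability facts), and $N$ is a continuous local martingale. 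The crucial point, and the main obstacle, is the a.s.\ convergence of $N$: its quadratic variation $\int s^2\|\sigma^\star\nabla f\|^2 ds$ \emph{cannot} be controlled directly from $t\sigma_{\infty}^2 \in \Lp^1$. I would sidestep this by the sign argument: $t\,w(t) \geq 0$ and $A(t)$ is a.s.\ bounded, so $N(t) = t\,w(t) - A(t)$ is a local martingale bounded below by a finite random variable; by the dichotomy for continuous local martingales (on the event where $N$ stays bounded below one has $\langle N\rangle_{\infty} < +\infty$ and $N$ converges), $N$ converges a.s. Hence $t\,w(t)$ converges a.s.\ to some $\ell \geq 0$; were $\ell > 0$, then $w(t) \sim \ell/t$ would contradict $\int_{t_0}^{\infty} w < +\infty$, forcing $\ell = 0$, i.e.\ $f(X(t)) - \min f = o(1/t)$ a.s.
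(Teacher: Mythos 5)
Your proof is correct and is, in essence, the argument this paper imports from \cite[Theorem~3.4]{mio}: the time-weighted energy $t\,(f(X(t))-\min f)+\frac{1}{2}\Vert X(t)-x^{\star}\Vert^2$, It\^o's formula combined with the bounds $0\le \nabla^2 f\le L\,I_{\H}$ (in the quadratic-form sense) and $\tr[\Sigma]\le \sigma_{\infty}^2$, a localization/Fatou step for the rate in expectation, almost sure convergence of the nonnegative semimartingale $t\,(f(X(t))-\min f)$ together with $\int_{t_0}^{+\infty} s\Vert\nabla f(X(s))\Vert^2 ds<+\infty$ a.s., and identification of the limit as zero from $\int_{t_0}^{+\infty}(f(X(s))-\min f)\,ds<+\infty$ a.s., which is precisely the role of Lemma~\ref{lim0}. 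The only cosmetic difference is that where you combine the supermartingale convergence theorem with the Dambis--Dubins--Schwarz dichotomy for continuous local martingales (to bypass the uncontrollable quadratic variation of the martingale part, an obstacle you correctly identify), the source argument invokes the nonnegative-semimartingale convergence theorem of \cite[Theorem~1.3.9]{mao}, which packages the same two facts.
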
 

\paragraph{Results on \eqref{SDI}} 
The far more intricate non-smooth version of \eqref{CSGDintro} has been recently studied in \cite{mio2}. Let $F,\sigma$ satisfy \eqref{H0} and \eqref{H}. We consider the stochastic differential inclusion 
\begin{equation}\label{SDI}\tag{$\mathrm{SGI}$}
\begin{cases}
\begin{aligned}
dX(t)&\in -\partial F(X(t))dt+\sigma(t,X(t))dW(t), \quad t> t_0,\\
X(t_0)&=X_0.
\end{aligned}
\end{cases}
\end{equation}

The following definition makes precise the notion of solution that we are interested in.
\begin{definition}\label{def:SDIsolution}
A solution of \eqref{SDI} is  a couple $(X,\eta)$ of $\calF_t$-adapted processes such that almost surely:
\begin{enumerate}[label=(\roman*)]
    \item $X$ is continuous with sample paths in the domain of $\partial g$.
    \item $\eta:[t_0,+\infty[\rightarrow \H$ is absolutely continuous, such that $\eta(t_0)=0$, and $\forall T>t_0$, $\eta'\in\Lp^2([t_0,T];\H)$,    $\eta'(t)\in \partial g(X(t))$ for almost all $t\geq t_0$;
    \item For $t> t_0$,
   \begin{equation}\label{itosdis}
 \begin{aligned}
 \begin{cases}
     X(t)&=X_0-\int_{t_0}^t \nabla f(X(s))ds-\eta(t)+\int_{t_0}^t \sigma(s,X(s))dW(s),\\
      X(t_0)&=X_0.
      \end{cases}
 \end{aligned}
     \end{equation}
\end{enumerate}
\end{definition}

For brevity, we sometimes omit the process $\eta$ and say that $X$ is a solution of \eqref{SDI}, meaning that,  there exists a process $\eta$ such that $(X,\eta)$ satisfies the previous definition. The definition of uniqueness for the process $X$ is given in Section~\ref{onstochastic}.

In order to show the main results for \eqref{SDI}, we consider the sequence of solutions $(X_{\lambda})_{\lambda>0}$ of the SDEs
\begin{equation}\label{SDEL}\tag{$\mathrm{SDE_{\lambda}}$}
\begin{cases}
\begin{aligned}
dX_{\lambda}(t)&= -\nabla (f+g_{\lambda})(X_{\lambda}(t))dt+\sigma(t,X_{\lambda}(t))dW(t), \quad t> t_0,\\
X_{\lambda}(t_0)&=X_0,
\end{aligned}
\end{cases}
\end{equation}
where 
$g_{\lambda}$ is the Moreau envelope of $g$ with parameter $\lambda>0$. \tcm{Observe that system \eqref{SDEL} was also directly studied in \cite{mio} to solve \eqref{P}, where a complete discussion is provided. It is worth emphasizing that the Moreau envelope of $g$, and the corresponding system \eqref{SDEL} is only needed as a means to establish existence and uniqueness of solution of \eqref{SDI}. This follows the same reasoning as in the deterministic case where the Moreau-Yosida regularization and nonlinear semigroup theory have proven very useful to show existence and uniqueness for differential inclusions \cite{brezis}. However, in the convergence analysis, we never replace $g$ by its Moreau envelope.} 

We define the integrability condition that for every $T>t_0$,
\begin{equation}\label{Hl}\tag{$\mathrm{H}_{\lambda}$}
\limsup_{\lambda\downarrow 0}\int_{t_0}^T \EE(\Vert \nabla g_{\lambda}(X_{\lambda}(t))\Vert^2)dt<+\infty, 
\end{equation} 
\begin{remark}
Some conditions on $g$ for \eqref{Hl} to be satisfied are discussed in \cite{Pettersson95}. Of particular interest, which covers a broad class of functions, if when $\partial g$ has full domain and there exists $C_0>0$ such that: 
\[
\norm{\partial^0 g(x)}\leq C_0(1+\Vert x\Vert),\quad \forall x\in\H.
\]
This is for instance the case when $g$ is Lipschitz continuous.
\end{remark}

The following results on \eqref{SDI} were proved in \cite{mio2}. 
\begin{theorem}[\cite{mio2}]\label{converge}
Consider $F=f+g$ and $\sigma$ satisfying \eqref{H0} and \eqref{H}. {Suppose further that $g$ verifies \eqref{Hl}}. 
Let $\nu\geq 2$, $t_0\geq 0$ , and consider the dynamic \eqref{SDI} with initial data $X_0\in \Lp^{\nu}(\Omega;\H)$.
Then, \eqref{SDI} has a unique solution in the sense of Definition~\ref{def:SDIsolution} $(X,\eta)\in S_{\H}^{\nu}[t_0]\times C^{1}([t_0,+\infty[;\H)$.

Moreover, if $\sigma_{\infty}\in\Lp^2([t_0,+\infty[)$, then the following holds:
    \begin{enumerate}[label=(\roman*)]
        \item \label{acota2fg} $\EE[\sup_{t\geq t_0}\norm{X(t)}^{\nu}]<+\infty$.
        \vspace{1mm}
        \item \label{bou} $\forall x^{\star}\in \calS_F$, $\lim_{t\rightarrow +\infty} \norm{X(t)-x^{\star}}$ exists a.s. and $\sup_{t\geq t_0}\norm{ X(t)}<+ \infty$ a.s..
        \vspace{1mm}
        \item \label{iiconvfg} If $g$ is continuous 
        , then $\nabla f$ is constant on $\calS_F$, there exists $x^{\star}\in \calS_F$ such that  $\slim_{t\rightarrow \infty}\nabla f(X(t))=\nabla f(x^{\star})$ a.s., and  
        \[
        \int_{t_0}^{+\infty} F(X(t))-\min F \hspace{0.1cm}dt<+\infty \quad a.s..
        \]
        \item There exists an $\calS_F$-valued random variable $X^{\star}$ such that $\wlim_{t\rightarrow +\infty} X(t)= X^{\star}$.
    \end{enumerate}

\end{theorem}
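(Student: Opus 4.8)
The plan is to construct the solution of \eqref{SDI} as the vanishing-$\lambda$ limit of the smooth approximations \eqref{SDEL}, and then to run a stochastic Lyapunov analysis on $\Vert X(t)-x^\star\Vert^2$ to extract the asymptotic behaviour. For each fixed $\lambda>0$ the drift $\nabla(f+g_\lambda)$ is $(L+1/\lambda)$-Lipschitz, so Theorem~\ref{3.1} applied to $f+g_\lambda$ (which satisfies \eqref{H01}) yields a unique $X_\lambda\in S_{\H}^{\nu}[t_0]$ together with an a priori moment bound. To pass to the limit I would apply It\^o's formula to $t\mapsto\Vert X_\lambda(t)-X_{\lambda'}(t)\Vert^2$ for two parameters $\lambda,\lambda'$. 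The drift splits into a monotone part from $\nabla f$, which is nonnegative and hence discardable, and a cross term in the Yosida approximations that I would control with the classical maximal-monotone inequality
\[
\dotp{\nabla g_\lambda(x)-\nabla g_{\lambda'}(x')}{x-x'}\geq -(\lambda+\lambda')\dotp{\nabla g_\lambda(x)}{\nabla g_{\lambda'}(x')};
\]
the diffusion terms recombine via the Lipschitz bound in \eqref{H}, and the martingale part is handled by Burkholder--Davis--Gundy. Taking expectations and invoking the uniform $\Lp^2$ control \eqref{Hl} on $\nabla g_\lambda(X_\lambda)$ shows that $(X_\lambda)_{\lambda>0}$ is Cauchy, so $X_\lambda\to X$ in $S_{\H}^{\nu}[t_0]$. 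To identify the limit, I would set $\eta_\lambda(t)\eqdef\int_{t_0}^t\nabla g_\lambda(X_\lambda(s))ds$; by \eqref{Hl} the derivatives $\eta_\lambda'$ are bounded in $\Lp^2$, so along a subsequence $\eta_\lambda'\rightharpoonup\eta'$ weakly and $\eta_\lambda\to\eta$. Passing to the limit in the integral form of \eqref{SDEL} recovers \eqref{itosdis}, while the inclusion $\eta'(t)\in\partial g(X(t))$ for a.e.\ $t$ would follow from the strong--weak sequential closedness of the graph of $\partial g$, using that $\nabla g_\lambda(X_\lambda)$ is a subgradient of $g$ at the proximal point $X_\lambda-\lambda\nabla g_\lambda(X_\lambda)$, which converges strongly to $X$. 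Uniqueness is obtained by the same It\^o computation applied to two solutions followed by Gr\"onwall.

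For the asymptotic statements I would fix $x^\star\in\calS_F$ and apply It\^o's formula to $h(t)\eqdef\Vert X(t)-x^\star\Vert^2$ along \eqref{itosdis}. Since $\nabla f(X(t))+\eta'(t)\in\partial F(X(t))$ and $0\in\partial F(x^\star)$, monotonicity and convexity give $\dotp{\nabla f(X(t))+\eta'(t)}{X(t)-x^\star}\geq F(X(t))-\min F\geq 0$, whence
\[
dh(t)\leq -2\pa{F(X(t))-\min F}dt+\Vert\sigma(t,X(t))\Vert_{\HS}^2\,dt+2\dotp{X(t)-x^\star}{\sigma(t,X(t))dW(t)}.
\]
The moment bound \ref{acota2fg} then comes from taking suprema and expectations, using $\Vert\sigma\Vert_{\HS}\leq\sigma_{\infty}\in\Lp^2$ together with Burkholder--Davis--Gundy to absorb the martingale, a Gr\"onwall step upgrading the estimate to the $\nu$-th moment. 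The displayed inequality is precisely the hypothesis of the stochastic Robbins--Siegmund (almost supermartingale) theorem: the positive drift $\sigma_{\infty}^2$ is integrable and the correction $-2(F(X)-\min F)$ is nonpositive. It yields, almost surely, that $\lim_{t\to\infty}h(t)$ exists and that $\int_{t_0}^{\infty}(F(X(t))-\min F)dt<+\infty$, which is \ref{bou} and the integral bound of \ref{iiconvfg}, and in particular $\sup_{t\geq t_0}\Vert X(t)\Vert<+\infty$ a.s.

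The weak convergence in the last item follows from a stochastic version of Opial's lemma: the existence of $\lim_t\Vert X(t)-x^\star\Vert$ for each fixed $x^\star$, together with the fact that every weak cluster point of $X(t)$ minimizes $F$ (a consequence of $\int(F(X)-\min F)dt<\infty$, weak lower semicontinuity of $F$, and boundedness of the trajectory), forces weak convergence to a single $\calS_F$-valued limit. The constancy of $\nabla f$ on $\calS_F$ is a standard convexity fact, and the strong convergence of $\nabla f(X(t))$ in \ref{iiconvfg} would be obtained from the co-coercivity (Baillon--Haddad) of $\nabla f$ combined with the integral bound.

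The hard part will be the existence proof: passing to the limit $\lambda\downarrow 0$ and, above all, identifying $\eta'(t)\in\partial g(X(t))$ from only the weak $\Lp^2$ convergence furnished by \eqref{Hl} requires the demiclosedness of the subdifferential graph and careful bookkeeping of the proximal points. The other delicate point is the almost sure reduction to a single null set in Opial's argument, which I would handle by applying the existence of $\lim_t\Vert X(t)-x^\star\Vert$ simultaneously over a countable dense subset of $\calS_F$ and then transferring the conclusion to all of $\calS_F$ by continuity.
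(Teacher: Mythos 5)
The first thing to note is that the paper itself does not prove Theorem~\ref{converge}: it is imported verbatim from \cite{mio2}, and the only trace of its proof in the present text is the introduction of the regularized system \eqref{SDEL} together with condition \eqref{Hl}, plus the remark that the Moreau envelope serves \emph{only} to establish existence and uniqueness while the convergence analysis is carried out directly on \eqref{SDI}. Your skeleton coincides with exactly that route: the Cauchy estimate on $(X_\lambda)_{\lambda>0}$ via the Yosida cross-term inequality and \eqref{Hl} is the Pettersson-type argument the paper alludes to; identifying $\eta'(t)\in\partial g(X(t))$ through strong--weak closedness of the graph of $\partial g$ is the right mechanism; uniqueness by monotonicity plus Gr\"onwall is correct; and items \ref{acota2fg}, \ref{bou} and the integral bound in \ref{iiconvfg} do follow from It\^o's formula applied to $\norm{X(t)-x^\star}^2$ combined with the Robbins--Siegmund theorem, as you describe.

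Two steps, however, contain genuine gaps. First, the strong convergence of the gradient in \ref{iiconvfg}. Baillon--Haddad, in the form $\tfrac{1}{2L}\norm{\nabla f(x)-\nabla f(x^\star)}^2\leq f(x)-f(x^\star)-\dotp{\nabla f(x^\star)}{x-x^\star}$, combined with $-\nabla f(x^\star)\in\partial g(x^\star)$ and convexity of $g$, gives $\norm{\nabla f(X(t))-\nabla f(x^\star)}^2\leq 2L\pa{F(X(t))-\min F}$, hence $\int_{t_0}^{\infty}\norm{\nabla f(X(t))-\nabla f(x^\star)}^2dt<+\infty$ a.s. But an integrable nonnegative continuous function need not tend to zero; this only yields $\liminf_{t\rightarrow\infty}\norm{\nabla f(X(t))-\nabla f(x^\star)}=0$. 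You need an extra ingredient: either a.s.\ uniform continuity of $t\mapsto\nabla f(X(t))$ (a Barbalat-type argument, which itself requires a uniform bound on $\eta'$ --- delicate in infinite dimension, since a continuous convex $g$ need not be Lipschitz on bounded sets), or a second supermartingale argument showing that $\lim_{t}\norm{\nabla f(X(t))-\nabla f(x^\star)}$ exists a.s., after which Lemma~\ref{lim0} concludes.

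Second, the cluster-point half of the Opial argument for item (iv). You justify ``every weak cluster point of $X(t)$ minimizes $F$'' by the integrability of $F(X(t))-\min F$ plus weak lower semicontinuity. Integrability only produces \emph{some} sequence $t_n\rightarrow\infty$ along which $F(X(t_n))\rightarrow\min F$; it gives no control on the values $F(X(s_k))$ along an arbitrary sequence $s_k\rightarrow\infty$ realizing a given weak cluster point, which is what Opial's lemma requires. In the smooth case this is repaired by applying It\^o/Robbins--Siegmund to $f(X(t))$ itself to get a.s.\ existence of $\lim_t f(X(t))$, and then Lemma~\ref{lim0} forces this limit to equal $\min f$, so weak lsc applies along \emph{every} sequence. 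For \eqref{SDI} It\^o's formula cannot be applied to $F(X(t))$ (and note that item (iv) does not even assume $g$ continuous), so this step needs a genuinely different argument --- this is precisely where the difficulty of the nonsmooth result of \cite{mio2} lies --- and your proposal does not supply it.
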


\paragraph{Tikhonov regularization.} Let us now turn to a Tikhonov regularization of \eqref{SDI}, \ie,
\begin{equation}\label{SDITA}\tag{$\mathrm{SGI-TA}$}
\begin{cases}
\begin{aligned}
dX(t)&\in-\partial F(X(t)) -\varepsilon(t) X(t) +\sigma(t,X(t))dW(t), \quad t\geq t_0,\\
X(t_0)&=X_0 .
\end{aligned}
\end{cases}
\end{equation}
Solution existence and uniqueness for \eqref{SDITA} is established in \cite[Theorem~3.3]{mio2}. We also have the following.

\begin{theorem}[{\cite[Theorem~4.1]{mio2}}]\label{converge20}
Let $\nu\geq 2$ and consider the dynamic \eqref{SDITA} with initial data $X_0\in \Lp^{\nu}(\Omega;\H)$, where $F=f+g$ and $\sigma$ satisfy Assumptions \eqref{H0} and \eqref{H}. Furthermore, assume that $g$ satisfies \eqref{Hl}. Then, there exists a unique solution $X\in S_{\H}^{\nu}[t_0]$ of \eqref{SDITA}. Let $x^{\star}=\proj_{\calS_F}(0)$ be the minimum norm solution, and for $\varepsilon>0$ let $x_{\varepsilon}$ be the unique minimizer of $F_{\varepsilon}(x)\eqdef F(x)+\frac{\varepsilon}{2}\|x\|^2$. Suppose that $\sigma_{\infty}\in \Lp^2([t_0,+\infty[)$, and that $\varepsilon:[t_0,+\infty[\rightarrow\R_+$ satisfies the conditions:

\vspace{2mm}
\begin{enumerate}[label=\rm{$(T_\arabic*)$}]
    \item \label{t1} $\varepsilon (t) \to 0$ \textit{as} $t \to +\infty$;
    \item \label{t2} $\displaystyle{\int_{t_0}^{+\infty} \varepsilon (t) dt = +\infty}$;
    \item \label{t3} $\displaystyle{ \int_{t_0}^{+\infty} \varepsilon (t) \left(    \|x^{\star} \|^2 -  \|x_{\varepsilon (t)} \|^2  \right)dt  <+\infty}. $
\end{enumerate}

\vspace{2mm}

\noindent Then, we have

\begin{enumerate}[label=(\roman*)]

\item   $\sup_{t\geq t_0}\norm{ X(t)}<+ \infty$ a.s., and

\item $\slim_{t\rightarrow +\infty} X(t)=x^{\star}$ a.s..
\vspace{1mm}
 \end{enumerate}
\end{theorem}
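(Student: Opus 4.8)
Since the statement is a result of \cite{mio2}, I take existence and uniqueness of the solution $X$ as given and concentrate on \textit{(i)} almost sure boundedness and \textit{(ii)} almost sure strong convergence to the minimal norm solution $x^{\star}=\proj_{\calS_F}(0)$. The guiding idea is to compare the trajectory not with the fixed anchor $x^{\star}$ but with the \emph{moving} Tikhonov path $t\mapsto x_{\varepsilon(t)}$, the unique minimizer of $F_{\varepsilon(t)}$. The plan is first to collect the classical properties of this path: the optimality condition $-\varepsilon x_{\varepsilon}\in\partial F(x_{\varepsilon})$; monotonicity of $\varepsilon\mapsto\norm{x_{\varepsilon}}$ together with $\norm{x_{\varepsilon}}\le\norm{x^{\star}}$; the strong limit $x_{\varepsilon}\to x^{\star}$ as $\varepsilon\downarrow 0$ (the classical Tikhonov fact, triggered by \ref{t1}); and the speed bound $\norm{\tfrac{d}{dt}x_{\varepsilon(t)}}\le \tfrac{|\varepsilon'(t)|}{\varepsilon(t)}\norm{x_{\varepsilon(t)}}$, all of which follow from monotonicity of $\partial F$ by differentiating (a.e.) the relation $\dotp{\varepsilon_1 x_{\varepsilon_1}-\varepsilon_2 x_{\varepsilon_2}}{x_{\varepsilon_1}-x_{\varepsilon_2}}\le 0$.

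The core of the argument is a Lyapunov analysis. I would apply It\^o's formula to $V(t)\eqdef\tfrac12\norm{X(t)-x_{\varepsilon(t)}}^2$. Using the solution decomposition of Definition~\ref{def:SDIsolution} to write the drift as $-\xi(t)-\varepsilon(t)X(t)$ with a selection $\xi(t)\in\partial F(X(t))$, and pairing monotonicity of $\partial F$ with the optimality condition $-\varepsilon(t)x_{\varepsilon(t)}\in\partial F(x_{\varepsilon(t)})$, the state-dependent part of the drift collapses to the contraction $-\varepsilon(t)\norm{X(t)-x_{\varepsilon(t)}}^2=-2\varepsilon(t)V(t)$; this is precisely the $\varepsilon(t)$-strong monotonicity of $\partial F+\varepsilon(t)I$. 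The It\^o correction contributes $\tfrac12\norm{\sigma(t,X(t))}_{\HS}^2\le\tfrac12\sigma_\infty^2(t)$, which is integrable since $\sigma_\infty\in\Lp^2([t_0,+\infty[)$, while the anchor-motion term $-\dotp{X(t)-x_{\varepsilon(t)}}{\tfrac{d}{dt}x_{\varepsilon(t)}}$ is handled through the path-speed bound and, crucially, condition \ref{t3}, which is exactly the integrability making the aggregate ``source'' contributions summable. The upshot is an inequality of Robbins--Siegmund type,
\[
dV(t)\le -2\varepsilon(t)V(t)\,dt + a(t)\,dt + d\mathcal{M}(t),
\]
where $\int_{t_0}^{+\infty} a(t)\,dt<+\infty$ and $\mathcal{M}$ is a local martingale whose quadratic variation density is controlled by $\sigma_\infty^2(t)V(t)$.

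From here I would invoke a continuous-time almost-supermartingale (Robbins--Siegmund) convergence theorem, of the kind already used in \cite{mio2} and gathered in Appendix~\ref{aux}. The integrable source term and the martingale control give that $V(t)$ converges almost surely to a finite limit and that $\int_{t_0}^{+\infty}\varepsilon(t)V(t)\,dt<+\infty$ a.s.. Condition \ref{t2}, $\int_{t_0}^{+\infty}\varepsilon(t)\,dt=+\infty$, then forces $\liminf_{t}V(t)=0$, and combined with the existence of the limit yields $V(t)\to 0$, i.e. $\norm{X(t)-x_{\varepsilon(t)}}\to 0$ a.s.. Almost sure boundedness \textit{(i)} is immediate, since a convergent $V$ is a.s. bounded and $\norm{x_{\varepsilon(t)}}\le\norm{x^{\star}}$. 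Finally, \ref{t1} gives $x_{\varepsilon(t)}\to x^{\star}$ strongly, so the triangle inequality delivers $\slim_{t\to+\infty}X(t)=x^{\star}$ a.s., which is \textit{(ii)}.

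I expect the main obstacle to be the stochastic layer of the argument: turning the energy estimate into an \emph{almost sure} (rather than merely in-expectation) statement requires the correct stochastic Gronwall / Robbins--Siegmund machinery and a careful control of the martingale $\mathcal{M}$, whose bracket is bounded by $\sigma_\infty^2 V$ rather than being deterministic. The second delicate point, already present in the deterministic theory but sharper here, is the three-way balance between the vanishing contraction rate $\varepsilon(t)$, the drift of the moving target $x_{\varepsilon(t)}$, and the diffusion: the conditions \ref{t1}--\ref{t3} together with $\sigma_\infty\in\Lp^2([t_0,+\infty[)$ are precisely what guarantee that none of these three effects dominates, and verifying that the anchor-motion term is absorbed by the contraction up to a \ref{t3}-integrable remainder is the computational heart of the proof.
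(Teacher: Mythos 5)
A preliminary remark: the paper never proves Theorem \ref{converge20}; it is imported verbatim from \cite[Theorem~4.1]{mio2} as background, so your proposal can only be measured against the stated hypotheses and the natural proof of that result, not against an in-paper argument.

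Your global architecture is right --- an It\^o/Lyapunov estimate closed by a continuous-time Robbins--Siegmund lemma, with \ref{t2} forcing $\liminf=0$ --- but the step you yourself call the computational heart fails as described. Anchoring the energy at the moving point $x_{\varepsilon(t)}$ creates the anchor-motion term $-\dotp{X(t)-x_{\varepsilon(t)}}{\tfrac{d}{dt}x_{\varepsilon(t)}}$, and you claim \ref{t3} is ``exactly'' the integrability this term needs. It is not, for two reasons. First, your speed bound $\norm{\tfrac{d}{dt}x_{\varepsilon(t)}}\le\tfrac{|\varepsilon'(t)|}{\varepsilon(t)}\norm{x_{\varepsilon(t)}}$ presupposes that $\varepsilon$ is (a.e.) differentiable, whereas the theorem only assumes a measurable $\varepsilon$ satisfying \ref{t1}--\ref{t3}. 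Second, and more fundamentally, \ref{t3} controls the \emph{proximity} of the Tikhonov path to $x^{\star}$ --- indeed $\norm{x^{\star}}^2-\norm{x_{\varepsilon}}^2\ge\norm{x^{\star}-x_{\varepsilon}}^2$, so \ref{t3} means $\int_{t_0}^{\infty}\varepsilon(t)\norm{x^{\star}-x_{\varepsilon(t)}}^2dt<+\infty$ --- not the \emph{speed} of the path, which is what your term requires. Concretely, absorbing the anchor-motion term into the contraction by Young's inequality leaves a remainder of order $\norm{\tfrac{d}{dt}x_{\varepsilon(t)}}^2/\varepsilon(t)\lesssim|\varepsilon'(t)|^2\varepsilon(t)^{-3}\norm{x^{\star}}^2$, while avoiding absorption requires $\int_{t_0}^{\infty}\norm{\tfrac{d}{dt}x_{\varepsilon(t)}}dt<+\infty$; neither is implied by \ref{t1}--\ref{t3} (they are extra hypotheses of Attouch--Cominetti type on $\varepsilon'/\varepsilon$), so the Robbins--Siegmund inequality you announce cannot be derived under the stated assumptions.

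The repair is to anchor at the \emph{fixed} point $x^{\star}$ and use $x_{\varepsilon(t)}$ only as a comparison point, so the path is never differentiated. With $h(t)\eqdef\tfrac12\norm{X(t)-x^{\star}}^2$ and a drift selection $\xi(t)\in\partial F(X(t))$, convexity gives $\dotp{\xi(t)}{X(t)-x^{\star}}\ge F(X(t))-F(x^{\star})$, and the identity $\varepsilon\dotp{X}{X-x^{\star}}=\tfrac{\varepsilon}{2}\pa{\norm{X}^2+\norm{X-x^{\star}}^2-\norm{x^{\star}}^2}$ turns the drift of $h$ into
\[
-\varepsilon(t)h(t)-\pa{F_{\varepsilon(t)}(X(t))-F_{\varepsilon(t)}(x^{\star})}.
\]
Since $x_{\varepsilon(t)}$ minimizes $F_{\varepsilon(t)}$ we have $F_{\varepsilon(t)}(X(t))\ge F_{\varepsilon(t)}(x_{\varepsilon(t)})$, and since $F(x^{\star})\le F(x_{\varepsilon(t)})$ we have $F_{\varepsilon(t)}(x^{\star})-F_{\varepsilon(t)}(x_{\varepsilon(t)})\le\tfrac{\varepsilon(t)}{2}\pa{\norm{x^{\star}}^2-\norm{x_{\varepsilon(t)}}^2}$, whence
\[
dh(t)\le\left[-\varepsilon(t)h(t)+\tfrac{\varepsilon(t)}{2}\pa{\norm{x^{\star}}^2-\norm{x_{\varepsilon(t)}}^2}+\tfrac12\sigma_{\infty}^2(t)\right]dt+d\mathcal{M}(t).
\]
Now the source term is integrable precisely by \ref{t3} and $\sigma_{\infty}\in\Lp^2([t_0,+\infty[)$, with no regularity of $\varepsilon$ or of the path ever used; your Robbins--Siegmund endgame then applies verbatim, giving that $h$ converges a.s. and $\int_{t_0}^{\infty}\varepsilon(t)h(t)dt<+\infty$ a.s., so \ref{t2} forces $h(t)\to0$ a.s. --- which delivers (i) and (ii) simultaneously, without even invoking the limit $x_{\varepsilon}\to x^{\star}$.
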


This means that we can obtain almost sure strong convergence of the trajectory to a particular (non-random) solution: the one of minimal norm.

\section{From first-order to second-order systems}\label{sec:scaling}
\subsection{Time scaling and averaging}
We apply a time scaling and then an averaging technique to the system \eqref{SDI} to derive an insightful reparametrization of a particular case of our second-order system \eqref{ISIHDN-S}, specifically, the case when $\beta\equiv \Gamma$. The main advantage of this method is that the results of \eqref{SDI} directly carry over to obtain results on the convergence behavior of \eqref{ISIHDN-S} without passing through a dedicated Lyapunov analysis. \tcm{However, as discussed in the introduction, the averaging technique is restricted to convex objectives, as it heavily relies on Jensen's inequality.}

\smallskip

Let $\nu\geq 2$, $s_0 > 0$. We consider the potential $F=f+g$ where $g$ satisfies \eqref{Hl}. Let $\sigma_1$ be a diffusion term in the time parametrization by $s$. We will study the dynamic \eqref{SDI} in $s$, starting at $s_0$, with diffusion term $\sigma_1$ under hypotheses \eqref{H0} and \eqref{H}. Let $\sigma_{1_*}>0$ be such that $$\Vert\sigma_1(s,x)\Vert_{\HS}\leq\sigma_{1_*}^2,\quad \forall s\geq s_0,\forall x\in\H,$$ and $\sigma_{1_{\infty}}(s)\eqdef \sup_{x\in\H}\Vert\sigma_1(s,x)\Vert_{\HS}$. We rewrite \eqref{SDI} in a format adapted to our case,
\begin{equation}\label{CSGD}
\begin{cases}
\begin{aligned}
d{Z(s)}&\in-\partial F({Z(s)})ds+\sigma_1(s,{Z(s)})dW(s), \quad s> s_0,\\
{Z(s_0)}&={Z_0} ,
\end{aligned}
\end{cases}
\end{equation}
where ${Z_0}\in\Lp^{\nu}([s_0,+\infty[;\H)$.

Let us make the change of time $s=\tau(t)$ in the dynamic \eqref{CSGD}, where $\tau$ is an increasing function from $[t_0,+\infty[$ to $[s_0,+\infty[$, which is twice differentiable, and which satisfies $\lim_{t\rightarrow+\infty}\tau(t)=+\infty$. Denote ${Y(t)}\eqdef {Z}(s)$ and $t_0$ be such that $s_0=\tau(t_0)$. By the chain rule and \cite[Theorem~8.5.7]{oksendal_2003}, we have 
\begin{equation}\label{scaling}
\begin{cases}
\begin{aligned}
d{Y(t)}&\in -\tau'(t)\partial F({Y(t)})dt+\sqrt{\tau'(t)}\sigma_1(\tau(t), {Y(t)})dW(t), \quad t>t_0,\\
{Y(t_0)}&={Z_0}.
\end{aligned}
\end{cases}
\end{equation}

Consider the smooth case, i.e. when $g\equiv 0$ and the hypotheses of Theorem~\ref{3.4} ($f\in C_L^2(\H)$ and $\sigma_1\in\Lp^2([s_0,+\infty[)$), then we can conclude that the convergence rate of \eqref{scaling} (when $g\equiv 0$) is the following 
\begin{equation}
    f({Y(t)})-\min f=o\left(\frac{1}{\tau(t)}\right) \text{ a.s.}.
\end{equation}

By introducing a function $\tau$ that grows faster than the identity ($\tau(t)\geq t$), we have accelerated the dynamic, passing from the asymptotic convergence rate $1/s$ for \eqref{CSGD} to $1/\tau(t)$ for \eqref{scaling}. The price to pay is {that the drift term in \eqref{scaling} is non-autonomous}, furthermore, when the coefficient in front of the gradient tends to infinity as $t\rightarrow+\infty$, it will preclude the use of an explicit discretization in time. To overcome this, we adapt from \cite{fast} the following approach, which is called averaging. \smallskip

Consider \eqref{scaling} and define the two stochastic processes ${X},{V}:\Omega\times[t_0,+\infty[\rightarrow\H$ as
\begin{equation}\label{eq:XYVrel}
\begin{cases}
d{X(t)}&={V(t)}dt, \quad t>t_0,\\
{Y(t)}& = {X(t)}+\tau'(t){V(t)}, \quad t>t_0,\\
{X(t_0)}& ={X_0},\quad {V(t_0)}= {V_0},
\end{cases}
\end{equation}
where $Y(t)$ is the process in \eqref{scaling}, and ${X_0},{V_0}\in \Lp^{\nu}(\Omega;\H)$ are initial data. This leads us to set ${Z_0}\eqdef {X_0}+\tau'(t_0){V_0}$ in order for the equations to fit.
According to the averaging, the differential form of ${Y(t)}$ is $$d{Y(t)}=d{X(t)}+\tau''(t){V(t)}dt+\tau'(t)d{V(t)}.$$ 
Combining the previous equation with \eqref{scaling}, we have that $$-\tau'(t)\partial F({Y(t)})dt+\sqrt{\tau'(t)}\sigma_1(\tau(t), {Y(t)})dW(t)\ni d{X(t)}+\tau''(t){V(t)}dt+\tau'(t)d{V(t)}.$$
Using that $d{X(t)}={V(t)}dt$ and dividing by $\tau'$, we then have $$-\partial F({X(t)}+\tau'(t){V(t)})dt+\frac{1}{\sqrt{\tau'(t)}}\sigma_1(\tau(t), {X(t)}+\tau'(t){V(t)})dW(t)\ni\frac{1+\tau''(t)}{\tau'(t)}{V(t)}dt+d{V(t)}.$$
Therefore, after the time scaling and averaging, we obtain the following dynamic:
\[
\begin{cases}\label{isihd-s1}\tag{ISIHD-S.1}
d{X(t)}&={V(t)}dt, \quad t>t_0,\\
d{V(t)}&\in -\frac{1+\tau''(t)}{\tau'(t)}{V(t)}dt-\partial F({X(t)}+\tau'(t){V(t)})dt \\
&\phantom{=}+\frac{1}{\sqrt{\tau'(t)}}\sigma_1(\tau(t),{X(t)}+\tau'(t){V(t)})dW(t),\quad t>t_0, \\
{X(t_0)}&={X_0}, \quad {V(t_0)}={V_0}.
\end{cases}
\]
Let $\gamma:[t_0,+\infty[\rightarrow\R_+$ satisfying \eqref{H1}. {We are going to determine $\tau$ in order to obtain a viscous damping coefficient equal to $\gamma$}, i.e., 
\begin{align*}
    \frac{1+\tau''(t)}{\tau'(t)}&=\gamma(t).
\end{align*}
Clearly, $\tau'$ solves the following ODE in $\zeta$ 
\[
\zeta'=\gamma \zeta-1.
\]
As observed in Remark~\ref{rem:Gamma}, the function $\Gamma$ also satisfies the same ODE, and thus we can adjust the initial condition of $\tau'$ to obtain 
\[
\tau'(t)=\Gamma(t)=p(t)\int_{t}^{\infty}\frac{du}{p(u)} \quad \forall t \geq t_0 .
\]
{We then integrate and take $\tau(t)=s_0+\int_{t_0}^t \Gamma(u)du$ to get $\tau(t_0)=s_0$ as required. This is a valid selection of $\tau$ since $t\mapsto s_0+\int_{t_0}^t \Gamma(u)du$ is an increasing function from $[t_0,+\infty[$ to $[s_0,+\infty[$, twice differentiable and $\Gamma\notin\Lp^1([t_0,+\infty[)$ because $\Gamma$ is lower bounded by a non-decreasing function since $\gamma$ is upper bounded by a non-increasing function (see \cite[Proposition 2.2]{cabot}) by \eqref{H1}. For this particular selection of $\tau$, and defining} $\tilde{\sigma}_1(t,\cdot)\eqdef\frac{\sigma_1(\tau(t),\cdot)}{\sqrt{\Gamma(t)}}$, we have that \eqref{isihd-s1} is equivalent to
\[
\begin{cases}\label{isihd-s2}\tag{ISIHD-S.2}
d{X(t)}&={V(t)}dt,\quad t>t_0,\\
d{V(t)}&\in -\gamma(t){V(t)}dt-\partial F({X(t)}+\Gamma(t){V(t)})dt+\tilde{\sigma}_1(t, {X(t)}+\Gamma(t){V(t)})dW(t), \quad t>t_0,\\
{X(t_0)}&={X_0},\quad {V(t_0)}={V_0}.
\end{cases}
\]
{Clearly, \eqref{isihd-s2} is nothing but \eqref{ISIHDN-S}} when $\beta\equiv\Gamma$ and $\sigma\equiv\tilde{\sigma}_1$.

\medskip

In order to be able to transfer the convergence results on $Z$ in \eqref{CSGD} (via \eqref{scaling}) to $X$ in \eqref{isihd-s2}, it remains to express ${X}$ in terms of ${Y}$ only. For this, let 
\[
a(t)\eqdef\frac{1}{\tau'(t)}, \quad A(t)\eqdef\int_{t_0}^t a(u)du.
\] 
Recalling the averaging in \eqref{eq:XYVrel}, we need to integrate the following equation
\begin{equation}
{V(t)}+a(t){X(t)}=a(t){Y(t)}.
\end{equation}
Multiplying both sides by $e^{A(t)}$ and using \eqref{eq:XYVrel}, we get equivalently 
\begin{equation}\label{initia}
d\left(e^{A(t)}{X(t)}\right)=a(t)e^{A(t)}{Y(t)} dt.
\end{equation}
Integrating and using again \eqref{eq:XYVrel}, we obtain 
\begin{align*}
{X(t)}&=e^{-A(t)}{X(t_0)}+e^{-A(t)}\int_{t_0}^t a(u)e^{A(u)}{Y(u)}du\\
    &=e^{-A(t)}{Y(t_0)}+e^{-A(t)}\int_{t_0}^t a(u)e^{A(u)}{Y(u)}du-e^{-A(t)}\tau'(t_0){V(t_0)}.
\end{align*}
Then we can write 
\begin{equation}\label{aver}
    {X(t)}=\int_{t_0}^t {Y(u)}d\mu_t(u)+\xi(t),
\end{equation}
where $\mu_t$ is the probability measure on $[t_0,t]$ defined by 
\begin{equation}\label{medida}
    \mu_t\eqdef e^{-A(t)}\delta_{t_0}+a(u)e^{A(u)-A(t)}du,
\end{equation}
where $\delta_{t_0}$ is the Dirac measure at $t_0$, $a(u)e^{A(u)-A(t)}du$ is the measure with density $a(\cdot)e^{A(\cdot)-A(t)}$ with respect to the Lebesgue measure on $[t_0,t]$, and $\xi(t)$ is a random process since $V_0$ is a random variable, i.e.,
\begin{equation}\label{chi}
\xi(t)\eqdef \xi(\omega,t)=-e^{-A(t)}\tau'(t_0){V_0}(\omega) \quad  \forall \omega \in \Omega . 
\end{equation}


\subsection{Convergence of the trajectory and convergence rates under general \texorpdfstring{$\gamma$}{}, and \texorpdfstring{$\beta\equiv\Gamma$}{}}
We here state the main results of this section. We first show almost sure convergence of the trajectory of \eqref{ISIHDN-S} to a random variable taking values in the set of minimizers of $F$. When $g \equiv 0$, we also provide convergence rates.

\smallskip

\tcm{The following result imposes an integrability condition on the diffusion term, which is essential for ensuring the almost sure weak convergence of the trajectory. More precisely, this integrability condition allows to show that the trajectory asymptotically converges almost surely, in the weak topology, to a random variable that takes values in the set of minimizers of $F$.} 

\begin{theorem}\label{trajectory}
 Let $\nu\geq2$ and consider the dynamic \eqref{ISIHDN-S} with initial data ${X_0},{V_0}\in\Lp^{\nu}(\Omega;\H)$, where $\gamma:[t_0,+\infty[\rightarrow\R_+$ satisfies \eqref{H1}, and $\beta\equiv\Gamma$. Besides, $F=f+g$ and $\sigma$ satisfy Assumptions \eqref{H0} and \eqref{H}. Moreover, suppose that $g$ satisfies \eqref{Hl}. Then, there exists a unique solution $({X,V})\in S_{\H\times \H}^{\nu}[t_0]$ of \eqref{ISIHDN-S}.  Additionally, if $\Gamma\sigma_{\infty}\in \Lp^2([t_0,+\infty[)$, then there exists an $\calS_F$-valued random variable ${X^{\star}}$ such that $\wlim_{t\rightarrow\infty} {X(t)} = {X^{\star}}$ a.s. and $\wlim_{t\rightarrow\infty}\Gamma(t){V(t)}=0.$ a.s..
\end{theorem}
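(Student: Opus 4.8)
The plan is to exploit the equivalence, established in the construction preceding the statement, between the second-order system \eqref{ISIHDN-S} with $\beta\equiv\Gamma$ (that is, \eqref{isihd-s2}) and the first-order stochastic differential inclusion \eqref{CSGD} read in the scaled time $s=\tau(t)$, and then to transfer the conclusions of Theorem~\ref{converge} rather than running a dedicated Lyapunov analysis. First I would fix the correspondence on the diffusion terms: setting $\sigma_1(s,\cdot)\eqdef\sqrt{\Gamma(\tau^{-1}(s))}\,\sigma(\tau^{-1}(s),\cdot)$ makes $\tilde{\sigma}_1\equiv\sigma$ in \eqref{isihd-s2}, and one checks that $\sigma_1$ inherits \eqref{H} on every bounded time interval (where $\Gamma$ is continuous, hence bounded and bounded away from $0$), which suffices for global well-posedness of \eqref{CSGD} by continuation. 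Crucially, the substitution $s=\tau(t)$, $ds=\Gamma(t)\,dt$ turns the standing integrability hypothesis $\Gamma\sigma_\infty\in\Lp^2([t_0,+\infty[)$ into $\sigma_{1_\infty}\in\Lp^2([s_0,+\infty[)$, which is exactly the hypothesis demanded by Theorem~\ref{converge}. Existence and uniqueness of $(X,V)\in S^\nu_{\H\times\H}[t_0]$ then follow from those of $Z$, the moment bounds propagating through the time change and through the averaging representation \eqref{aver} (Jensen controls $X$, and $\Gamma\geq\Gamma(t_0)>0$ controls $V=(Y-X)/\Gamma$).

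For the asymptotics I would first obtain the weak limit of $Y$ and then average it down to $X$. Since $\sigma_{1_\infty}\in\Lp^2([s_0,+\infty[)$, Theorem~\ref{converge} applies to \eqref{CSGD} and provides an $\calS_F$-valued random variable $X^\star$ with $\wlim_{s\to+\infty}Z(s)=X^\star$ a.s., together with the sample-path bound $\sup_{s}\norm{Z(s)}<+\infty$ a.s. (item \ref{bou}). Composing with $\tau(t)\to+\infty$ gives $\wlim_{t\to+\infty}Y(t)=X^\star$ and $\sup_t\norm{Y(t)}<+\infty$ a.s.. The passage to $X$ rests on the averaging identity \eqref{aver}, $X(t)=\int_{t_0}^t Y(u)\,d\mu_t(u)+\xi(t)$, with $\mu_t$ the probability measure \eqref{medida} and $\xi$ as in \eqref{chi}.

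The technical heart is to show that the averaging operator asymptotically forgets the initial segment, which reduces to proving $A(t)=\int_{t_0}^t \tfrac{du}{\Gamma(u)}\to+\infty$. Here I would use Remark~\ref{rem:Gamma}: from $\Gamma'=\gamma\Gamma-1$ one gets $\tfrac{1}{\Gamma}=\gamma-(\ln\Gamma)'$, so that $A(t)=\ln p(t)-\ln\Gamma(t)+\ln\Gamma(t_0)=\ln\Gamma(t_0)-\ln\!\int_t^\infty\tfrac{ds}{p(s)}\to+\infty$, the last step because $\int_t^\infty p(s)^{-1}ds\to0^+$ under \eqref{H1}. Two consequences follow: $\xi(t)=-e^{-A(t)}\Gamma(t_0)V_0\to0$ strongly a.s., and for every fixed $T$, $\mu_t([t_0,T])=e^{A(T)-A(t)}\to0$, i.e. the mass of $\mu_t$ escapes to $+\infty$. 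A standard Toeplitz-type argument then finishes the job: for fixed $\zeta\in\H$ the scalar map $u\mapsto\langle Y(u),\zeta\rangle$ is bounded a.s. and converges to $\langle X^\star,\zeta\rangle$, so splitting $\int_{t_0}^t\langle Y,\zeta\rangle\,d\mu_t$ over $[t_0,T]$ and $[T,t]$ and using the escape of mass yields $\langle X(t),\zeta\rangle\to\langle X^\star,\zeta\rangle$; ranging $\zeta$ over a countable dense subset of $\H$ and invoking the a.s. sample-path boundedness upgrades this to $\wlim_{t\to+\infty}X(t)=X^\star$ a.s.. Finally, since the averaging relation gives $\Gamma(t)V(t)=Y(t)-X(t)$, weak convergence of both $Y$ and $X$ to the same limit yields $\wlim_{t\to+\infty}\Gamma(t)V(t)=0$ a.s..

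I expect the main obstacle to be the averaging step in the weak topology made almost sure: because the weak limit $X^\star$ is itself random, the Toeplitz argument must be run $\omega$-wise, combining the a.s. boundedness of the sample paths (Theorem~\ref{converge}, item \ref{bou}) with separability of $\H$ to legitimately exchange the ``for all $\zeta$'' and ``almost surely'' quantifiers. The clean divergence $A(t)\to+\infty$ extracted from Remark~\ref{rem:Gamma} is the linchpin that makes the average forgetful, and the secondary technical care is the transfer of the well-posedness hypotheses through the (possibly unbounded) scaling factor $\Gamma$.
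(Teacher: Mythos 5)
Your proposal is correct and follows essentially the same route as the paper: transfer via the time-scaling/averaging equivalence, invoke Theorem~\ref{converge} for the scaled first-order inclusion, then use the averaging identity \eqref{aver} together with $\xi(t)\to 0$ and a Toeplitz-type argument to pass the weak limit from $Y$ to $X$. The only cosmetic differences are that you re-derive inline (from Remark~\ref{rem:Gamma}) the divergence $A(t)\to+\infty$ and the forgetting property of $\mu_t$, which the paper delegates to Lemma~\ref{1gam} and Lemma~\ref{ab}, and your global bound $\Gamma\geq\Gamma(t_0)$ is unjustified in general but harmless, since only the compact-interval positivity of $\Gamma$ (which you also invoke) is needed for the moment estimates.
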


\proof{}
Let $\theta(t)\eqdef \int_{t_0}^t \Gamma(u)du$, $\tilde{\sigma}(s,\cdot)\eqdef\sigma(\theta^{-1}(s),\cdot)\sqrt{\Gamma(\theta^{-1}(s))}$, and $\tilde{\sigma}_{\infty}(s)\eqdef \sup_{x\in\H} \Vert \tilde{\sigma}(s,x)\Vert_{\HS} $. Then $\Gamma\sigma_{\infty}\in \Lp^2([t_0,+\infty[)$ is equivalent to $\tilde{\sigma}_{\infty}\in\Lp^2(\R_+)$.
Consider the dynamic: \smallskip

\begin{equation}
\begin{cases}\label{inv}
d{Z(s)}&\in -\partial F({Z(s)})+\tilde{\sigma}(s,{Z(s)})dW(s),\quad s>0,\\
{Z(0)}&={X_0}+\Gamma(t_0){V_0}.
\end{cases}    
\end{equation}

By Theorem~\ref{converge}, we have that there exists a unique solution $({Z},\eta)\in S_{\H}^{\nu}\times C^1(\R_+;\H)$ of \eqref{inv}, and an $\calS_F$-valued random variable ${X^{\star}}$ such that $\wlim_{s\rightarrow\infty} {Z(s)} =  {X^{\star}}$ a.s.. Moreover, using the time scaling $\tau\equiv\theta$ and the averaging described in this section, we end up with the dynamic \eqref{ISIHDN-S} in the case where $\beta\equiv\Gamma$.\smallskip

 It is direct to check that the time scaling and averaging preserves the uniqueness of a solution \linebreak
 $({X,V})\in~S_{\H\times\H}^0[t_0]$. Now let us validate $({X,V})\in S_{\H\times\H}^{\nu}[t_0]$. Since $$\EE\left(\sup_{s\in[0,T]}\Vert {Z(s)}\Vert^{\nu}\right)<+\infty, \quad \forall T>0,$$
 we directly obtain $$\EE\left(\sup_{t\in[t_0,T]}\Vert {Y(t)}\Vert^{\nu}\right)<+\infty,\quad \forall T>t_0.$$
 Thanks to the relation \eqref{aver}, 
the following holds
 \begin{align*}
     \Vert {X(t)}\Vert^{\nu}&\leq \nu\left(\Big\Vert {X(t)}-\int_{t_0}^t {Y(u)}d\mu_t(u)\Big\Vert^{\nu}+\Big\Vert\int_{t_0}^t {Y(u)}d\mu_t(u)\Big\Vert^{\nu}\right)\\
     &\leq \nu\left(\Vert \xi(t)\Vert^{\nu}+(t-t_0)^{\nu-1}\int_{t_0}^t \Vert {Y(u)}\Vert^{\nu}d\mu_t(u)\right).
 \end{align*}
Let $T>t_0$ be arbitrary. Taking supremum over $[t_0,T]$ and then expectation at both sides, we obtain that 
\begin{align*}
     \EE\left(\sup_{t\in [ t_0,T]}\Vert {X(t)}\Vert^{\nu}\right)\leq \nu\left(\EE(\Vert {V_0}\Vert^{\nu})\Vert\Gamma(t_0)\Vert^{\nu}+(T-t_0)^{\nu-1}\EE\left(\sup_{t\in [ t_0,T]}\Vert {Y(t)}\Vert^{\nu}\right)\right)<+\infty.
 \end{align*}
 Since ${V(t)}=\frac{{Y(t)}-{X(t)}}{\Gamma(t)}$, we have \begin{align*}
     \Vert {V(t)}\Vert^{\nu}\leq \frac{\nu}{\Gamma^{\nu}(t)}(\Vert {Y(t)}\Vert^{\nu}+\Vert {X(t)}\Vert^{\nu}).
 \end{align*}
Similarly as before, we let $T>t_0$ be arbitrary, and take the supremum over $[t_0,T]$ and then expectation at both sides to obtain \begin{align*}
    \EE\left(\sup_{t\in [ t_0,T]}\Vert {V(t)}\Vert^{\nu}\right)\leq \nu\sup_{t\in[t_0,T]}\frac{1}{\Gamma^{\nu}(t)}\left(\EE\left(\sup_{t\in [ t_0,T]}(\Vert {Y(t)}\Vert^{\nu}+\Vert {X(t)}\Vert^{\nu})\right)\right).
\end{align*}
Since $\Gamma$ is a continuous positive function, by the extreme value theorem, we have that there exists $t_T\in [t_0,T]$ such that $\sup_{t\in[t_0,T]}\frac{1}{\Gamma^{\nu}(t)}= \frac{1}{\Gamma^{\nu}(t_T)}<+\infty$, and we conclude that $({X,V})\in S_{\H\times\H}^{\nu}[t_0]$.\smallskip
 
Now we prove that there exists an $\calS_F-$valued random variable ${X^{\star}}$ such that $\wlim_{t\rightarrow\infty} {X(t)} =  {X^{\star}}$ a.s.. By virtue of Theorem~\ref{converge}, there exists an $\calS_F-$valued random variable ${X^{\star}}$ such that $\wlim_{s\rightarrow\infty} {Z(s)} =  {X^{\star}}$ a.s.. We also notice that we have directly $\wlim_{t\rightarrow\infty} {Y(t)} = {X^{\star}}$ a.s.. Let $h\in\H$ be arbitrary and use the relation \eqref{aver} as follows: \begin{align*}
    \vert \langle {X(t)}-{X^{\star}},h\rangle\vert&\leq \bigg|\bigg\langle {X(t)}-\int_{t_0}^t {Y(u)}d\mu_t(u),h\bigg\rangle\bigg|+\bigg|\bigg\langle\int_{t_0}^t {Y(u)}d\mu_t(u)-{X^{\star}},h\bigg\rangle\bigg|\\
    &= \vert\langle \xi(t),h\rangle\vert+\bigg|\bigg\langle\int_{t_0}^t ({Y(u)}-{X^{\star}})d\mu_t(u),h\bigg\rangle\bigg|\\
    &= \vert\langle \xi(t),h\rangle\vert+\bigg|\int_{t_0}^t\langle {Y(u)}-{X^{\star}},h\rangle d\mu_t(u)\bigg|\\
    &\leq \Vert\xi(t)\Vert\Vert h\Vert+\int_{t_0}^t\vert\langle  {Y(u)}-{X^{\star}},h\rangle \vert d\mu_t(u),
\end{align*}
where the second equality comes from the dominated convergence theorem, since $\sup_{s>t_0}\Vert {Y(s)}\Vert<+\infty$ a.s. (by~\ref{bou} of Theorem~\ref{converge}).

\noindent Now let $a(t)=\frac{1}{\Gamma(t)}$ and $A(t)=\int_{t_0}^t\frac{du}{\Gamma(u)}$. By Lemma~\ref{1gam} (defined in Section \ref{detaux}) and the fact that $V_0\in\Lp^{\nu}(\Omega;\H)$, we have that $\lim_{t\rightarrow\infty}\Vert\xi(t)\Vert=0$ a.s.. On the other hand, it holds that 
\begin{align*}
\int_{t_0}^t \vert\langle {Y(u)}-{X^{\star}},h\rangle\vert d\mu_t(u)&\leq e^{-A(t)}\vert\langle {Y(t_0)}-{X^{\star}},h\rangle\vert+e^{-A(t)}\int_{t_0}^t a(u)e^{A(u)}\vert \langle {Y(u)}-{X^{\star}},h\rangle\vert du.
\end{align*}
Now let $b(u)=\vert \langle {Y(u)}-{X^{\star}}, h\rangle\vert$. Since we already proved that $\lim_{u\rightarrow\infty}b(u)=0$ a.s., and we have that $a\notin \Lp^1([t_0,+\infty[)$ by Lemma~\ref{1gam}, we utilize Lemma~\ref{ab} (also defined in Section \ref{detaux}) with our respective $a,b$ functions. This let us conclude that $$ \lim_{t\rightarrow+\infty}\vert \langle {X(t)}-{X^{\star}},h\rangle\vert=0 \quad a.s..$$
Thus, $\wlim_{t\rightarrow\infty} {X(t)} = {X^{\star}}$ a.s.. Finally, since $${Y(t)}={X(t)}+\Gamma(t){V(t)},$$
and the fact that ${X}$ and ${Y}$ have (a.s.) the same limit, we conclude that 
\[
\wlim_{t\rightarrow\infty}\Gamma(t){V(t)}=0 \quad a.s..
\]
\endproof
\tcm{In the smooth case, we also have convergence rates on the objective value and the gradient. In particular, the following two theorems will provide general abstract convergence rates under the same integrability condition on the diffusion term. These results will be specialized to specific choice of the parameters in Section \ref{fastconver}.}
\begin{theorem}\label{maxai}
Let $\nu\geq 2$ and consider the dynamic \eqref{ISIHD-S} with initial data ${X_0},{V_0}\in \Lp^{\nu}(\Omega;\H)$, such that $f$ and $\sigma$ satisfy \eqref{H01} and \eqref{H}, and in the case where $\gamma$ satisfies \eqref{H1}, $\beta\equiv\Gamma$. Moreover, suppose that either $\H$ is finite dimensional or $f\in C^2(\H)$, and $$t\mapsto \sqrt{\theta(t)}\Gamma(t)\sigma_{\infty}(t) \in \Lp^2([t_0,+\infty[),$$ where $\theta(t)\eqdef\int_{t_0}^t \Gamma(u)du$. Then the solution trajectory $({X,V})\in S_{\H\times\H}^{\nu}[t_0]$ is unique and satisfies: 
\[
\mathbb{E}[f({X(t)})-\min f]=\mathcal{O}\left(\max\Big\{e^{-A(t)},I\left[\frac{1}{\theta}\right](t)\Big\}\right),\quad \forall t> t_0,
\]
where $A(t)\eqdef\int_{t_0}^t\frac{du}{\Gamma(u)}$ and we recall that $I[\frac{1}{\theta}](t)= e^{-A(t)}\int_{t_0}^t \frac{1}{\theta(u)}\frac{e^{A(u)}}{\Gamma(u)}du$.
\end{theorem}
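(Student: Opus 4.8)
The plan is to exploit the fact that, with $\beta\equiv\Gamma$, \eqref{ISIHD-S} is precisely the time-scaled and averaged version of the smooth first-order system, so that the value rate can be \emph{inherited} from Theorem~\ref{3.4} rather than re-derived through a dedicated Lyapunov analysis. First I would record existence, uniqueness and $(X,V)\in S_{\H\times\H}^{\nu}[t_0]$ exactly as in the proof of Theorem~\ref{trajectory}, applying Theorem~\ref{3.4} to the scaled process $Y(t)=Z(\theta(t))$ solving \eqref{inv} with diffusion $\tilde{\sigma}$ and using that the scaling–averaging transformation preserves uniqueness. The crucial algebraic step is to turn the averaging identity \eqref{aver} into a genuine convex combination: combining \eqref{aver}, \eqref{medida} and \eqref{chi} with $Y(t_0)=X_0+\Gamma(t_0)V_0$, the random shift $\xi(t)=-e^{-A(t)}\Gamma(t_0)V_0$ exactly cancels the $V_0$-part of the Dirac mass at $t_0$, leaving
\[
X(t)=e^{-A(t)}X_0+\int_{t_0}^t Y(u)\,a(u)e^{A(u)-A(t)}\,du,\qquad a\eqdef 1/\Gamma .
\]
Since $A(t_0)=0$, the weights $e^{-A(t)}$ and $a(u)e^{A(u)-A(t)}\,du$ integrate to $1$ over $\{t_0\}\cup[t_0,t]$, so $X(t)$ is a convex combination of $X_0$ and the past trajectory $\{Y(u)\}_{u\in[t_0,t]}$.

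Next I would transfer the rate. Applying Theorem~\ref{3.4}\,(i) to $Z$ requires $s\mapsto s\,\tilde{\sigma}_{\infty}^2(s)\in\Lp^1$; the change of variables $s=\theta(t)$, for which $ds=\Gamma(t)\,dt$ and $\tilde{\sigma}_{\infty}^2(\theta(t))=\Gamma(t)\sigma_{\infty}^2(t)$, shows that this is exactly the standing hypothesis $\sqrt{\theta}\,\Gamma\sigma_{\infty}\in\Lp^2([t_0,+\infty[)$. Theorem~\ref{3.4}\,(i) then gives $\EE[f(Z(s))-\min f]=\mathcal{O}(1/s)$, and since $Z$ starts at $s_0>0$ this yields a finite constant $C>0$ with $\EE[f(Y(u))-\min f]\le C/(s_0+\theta(u))$ for every $u\ge t_0$.

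With these two ingredients the rate follows from Jensen's inequality. Convexity of $f$ applied to the convex combination above gives
\[
f(X(t))-\min f\le e^{-A(t)}\big(f(X_0)-\min f\big)+\int_{t_0}^t\big(f(Y(u))-\min f\big)a(u)e^{A(u)-A(t)}\,du ,
\]
and taking expectations the first term is $\mathcal{O}(e^{-A(t)})$ because $\EE[f(X_0)-\min f]\le\frac{L}{2}\EE\|X_0-x^{\star}\|^2<\infty$ for $x^{\star}\in\calS$. The second term equals $I[\,\EE(f(Y(\cdot))-\min f)\,](t)\le C\,I[\frac{1}{s_0+\theta}](t)$ by the definition of the averaging operator $I[\cdot]$ and the kernel in \eqref{medida}.

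The delicate point — and the reason the bound is stated as a maximum — is the behavior of the kernel near the lower endpoint $t_0$, where $\theta(u)\to0$ and $1/\theta$ is not integrable against $a(u)e^{A(u)}\,du$. I would resolve this by splitting $[t_0,t]$ at a fixed $t_1>t_0$ with $\theta(t_1)$ bounded away from $0$: on $[t_0,t_1]$ the integrand $\EE(f(Y(u))-\min f)$ is uniformly bounded and $\int_{t_0}^{t_1}a(u)e^{A(u)-A(t)}\,du=e^{-A(t)}(e^{A(t_1)}-1)=\mathcal{O}(e^{-A(t)})$, which produces the $e^{-A(t)}$ term; on $[t_1,t]$ the bound $C/\theta(u)$ is in force and the resulting integral is the (finite) tail of $I[\frac{1}{\theta}](t)$, whose asymptotics coincide with those of $I[\frac{1}{s_0+\theta}](t)$. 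Adding the two contributions gives $\mathcal{O}(\max\{e^{-A(t)},I[\frac{1}{\theta}](t)\})$, as claimed. The main obstacles are therefore the $V_0$-cancellation that makes $X(t)$ an \emph{exact} convex combination (so that no cross term survives and Jensen applies directly), and the endpoint split that reconciles the singular symbol $I[1/\theta]$ with the genuinely finite regularized rate.
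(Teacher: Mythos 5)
Your proposal is correct and shares the paper's overall skeleton --- time scaling and averaging, transfer of the $\mathcal{O}(1/s)$ rate from Theorem~\ref{3.4} via the equivalence between $\sqrt{\theta}\,\Gamma\sigma_{\infty}\in \Lp^2([t_0,+\infty[)$ and $s\mapsto s\tilde{\sigma}_{\infty}^2(s)\in\Lp^1$, followed by Jensen's inequality against the averaging measure --- but it differs from the paper's proof in two substantive ways. First, the decomposition. The paper writes $\EE[f(X(t))-\min f]$ as $\EE\big[f(X(t))-f\big(\int_{t_0}^t Y(u)\,d\mu_t(u)\big)\big]+\EE\big[f\big(\int_{t_0}^t Y(u)\,d\mu_t(u)\big)-\min f\big]$, controls the first term by convexity and Cauchy--Schwarz through $\Vert\nabla f(X(t))\Vert\,\Vert\xi(t)\Vert\le\Vert\xi(t)\Vert\big(L\sup_{t\ge t_0}\Vert X(t)\Vert+\Vert\nabla f(0)\Vert\big)$ (which requires moment bounds on $\sup_t\Vert X(t)\Vert$), and applies Jensen only to the second term. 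You instead observe that $\xi(t)=-e^{-A(t)}\Gamma(t_0)V_0$ exactly cancels the $V_0$-part of the Dirac contribution $e^{-A(t)}Y(t_0)$, so that $X(t)$ is an \emph{exact} convex combination of $X_0$ and $\{Y(u)\}_{u\in[t_0,t]}$; a single application of Jensen then suffices, the $e^{-A(t)}$ term coming from $\EE[f(X_0)-\min f]\le\tfrac{L}{2}\EE\Vert X_0-x^{\star}\Vert^2<+\infty$, with no gradient bound and no uniform moment estimate on the trajectory. Second, your treatment of the lower endpoint is more careful than the paper's: since $\theta(t_0)=0$ and $\theta(u)\sim\Gamma(t_0)(u-t_0)$ as $u\downarrow t_0$, the integrand $\frac{1}{\theta(u)}\frac{e^{A(u)}}{\Gamma(u)}$ is \emph{not} integrable at $u=t_0$, so $I[1/\theta](t)$ read literally is $+\infty$; the paper's final display substitutes the bound $C/\theta(u)$ into the integral without noticing this divergence. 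Your split at a fixed $t_1>t_0$ (equivalently, keeping the shift $s_0>0$ in the scaling $\tau=s_0+\theta$, as in Section~\ref{sec:scaling}) produces the finite, correctly regularized quantity whose asymptotics are what the statement intends --- and which matches the computations actually performed in Corollary~\ref{corat}. So your argument not only proves the result but also repairs a small gap in the paper's own derivation.
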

From hypothesis \eqref{H1} we have that $\lim_{t\rightarrow+\infty} e^{-A(t)}=0$, and since $\Gamma\notin\Lp^1([t_0,+\infty[)$, we can use Lemma~\ref{ab} to check that $\lim_{t\rightarrow\infty}I\left[\frac{1}{\theta}\right](t)=0$.



\begin{remark}
\tcm{If $\H$ is finite-dimensional and $g$ is a $L_0-$Lipschitz function, we could obtain convergence rates for the nonsmooth setting \eqref{P} by considering \eqref{ISIHD-S} with potential $F_{\rho}\eqdef f+g_{\rho}$, where $g_{\rho}$ is the Moreau envelope of $g$ with parameter $\rho>0$. With the same hypotheses on the initial conditions, $\sigma,\gamma,\beta$ as in Theorem \ref{maxai} and proceeding as in \cite[Section 5]{mio}, we could obtain \[
\mathbb{E}[F({X_{\rho}(t)})-\min F]=\mathcal{O}\left(\max\Big\{e^{-A(t)},I\left[\frac{1}{\theta}\right](t)\Big\}\right)+\frac{L_0^2}{2}\rho,\quad \forall t> t_0,
\]
where $(X_{\rho},V_{\rho})\in S_{\H\times\H}^{\nu}[t_0]$ is the solution of \eqref{ISIHD-S} with potential $F_{\rho}$.
}
\end{remark}

\proof{}
We will utilize the averaging technique used in Theorem~\ref{trajectory} and Jensen's inequality. First, we have 
\[
\mathbb{E}(f({X(t)})-\min f)=\mathbb{E}\left(f({X(t)})-f\left(\int_{t_0}^t {Y(u)}d\mu_t(u)\right)\right)+\mathbb{E}\left(f\left(\int_{t_0}^t {Y(u)}d\mu_t(u)\right)-\min f\right).
\]
Let us recall that $\mathbb{E}(\sup_{s\geq 0}\Vert {Z(s)}\Vert)<+\infty$, which implies that $\mathbb{E}(\sup_{t\geq t_0}\Vert {X(t)}\Vert)<+\infty$. We bound the first term using the convexity of $f$ and Cauchy-Schwarz inequality, \ie, that for every $x,y\in\H$, $$f(x)-f(y)\leq \langle\nabla f(x),x-y\rangle\leq \Vert \nabla f(x)\Vert \Vert y-x\Vert,$$ we get that
 \begin{align*}
f({X(t)})-f\left(\int_{t_0}^t {Y(u)}d\mu_t(u)\right) &\leq \Vert\nabla f({X(t)})\Vert\Vert\xi(t)\Vert\\
    &\leq \Vert\xi(t)\Vert(L\Vert {X(t)}\Vert+\Vert\nabla f(0)\Vert)\\
    &\leq \Vert\xi(t)\Vert\left(L\sup_{t\geq t_0}\Vert {X(t)}\Vert+\Vert\nabla f(0)\Vert\right),
\end{align*}
and we conclude that 
\[
\mathbb{E}\left(f({X(t)})-f\left(\int_{t_0}^t {Y(u)}d\mu_t(u)\right)\right)=\mathcal{O}(e^{-A(t)}).
\]
For the second term, we use Jensen's inequality to obtain 
\begin{align*}
\mathbb{E}\left(f\left(\int_{t_0}^t {Y(u)}d\mu_t(u)\right)-\min f\right)&\leq \int_{t_0}^t \mathbb{E}[f({Y(u)})-\min f]d\mu_t(u)\\
 &\leq e^{-A(t)}\mathbb{E}[f({Y(t_0)})-\min f]+e^{-A(t)}\int_{t_0}^t \frac{e^{A(u)}}{\Gamma(u)}\mathbb{E}(f({Y(u)})-\min f)du.
 \end{align*}
Since $\sqrt{\theta}\Gamma\sigma_{\infty}\in\Lp^2([t_0,+\infty[)$ is equivalent to $s\mapsto s\tilde{\sigma}_{\infty}^2(s)\in\Lp^1(\R_+)$, by Theorem~\ref{3.4}, we have that there exists $C>0$ such that $\mathbb{E}(f({Z(s)})-\min f)\leq \frac{C}{s}$. Then, we have $\mathbb{E}(f({Y(t)})-\min f)\leq \frac{C}{\theta(t)}$. Hence, there exists $C_0> 0$ such that
\[
\mathbb{E}\left(f({X(t)})-\min f\right)\leq C_0e^{-A(t)}+CI\left[\frac{1}{\theta}\right](t).
\]
\endproof

\begin{theorem}\label{thetagamma}
Let $\nu\geq 2$ and consider the dynamic \eqref{ISIHD-S} with initial data ${X_0},{V_0}\in \Lp^{\nu}(\Omega;\H)$, such that $f$ and $\sigma$ satisfy \eqref{H01} and \eqref{H}, and in the case where $\gamma$ satisfies \eqref{H1}, $\beta\equiv\Gamma$. Moreover, suppose that $f\in C^2(\H)$ and $$t\mapsto\sqrt{\theta(t)}\Gamma(t)\sigma_{\infty}(t)\in \Lp^2([t_0,+\infty[),$$
where $\theta(t)\eqdef\int_{t_0}^t\Gamma(u)du$. Then the solution trajectory $({X,V})\in S_{\H\times\H}^{\nu}[t_0]$ is unique and satisfies
\begin{equation}
    \int_{t_0}^{\infty}\theta(u)\Gamma(u)\Vert\nabla f({X(u)}+\Gamma(u){V(u)})\Vert^2du<+\infty\quad a.s..
\end{equation}
\end{theorem}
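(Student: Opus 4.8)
The plan is to reduce this statement to the first-order result Theorem~\ref{3.4}\,(ii) via exactly the time-scaling and averaging correspondence used in the proof of Theorem~\ref{trajectory}. Set $\theta(t)=\int_{t_0}^t\Gamma(u)du$ and recall that the averaging identity gives $Y(t)=X(t)+\Gamma(t)V(t)=Z(\theta(t))$, where $Z$ solves the reparametrized first-order SDE \eqref{inv} with diffusion $\tilde{\sigma}(s,\cdot)=\sigma(\theta^{-1}(s),\cdot)\sqrt{\Gamma(\theta^{-1}(s))}$. Consequently $\nabla f(X(u)+\Gamma(u)V(u))=\nabla f(Z(\theta(u)))$, and the integrand of interest equals $\theta(u)\Gamma(u)\Vert\nabla f(Z(\theta(u)))\Vert^2$. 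Existence, uniqueness, and the moment bound $(X,V)\in S_{\H\times\H}^{\nu}[t_0]$ follow verbatim from the argument in Theorem~\ref{trajectory}, simply invoking Theorem~\ref{3.4} (rather than Theorem~\ref{converge}) for the well-posedness and moment control of $Z$ in the smooth setting, and transferring to $(X,V)$ through the relation \eqref{aver}.

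The first substantive step is to verify that the hypothesis $t\mapsto\sqrt{\theta(t)}\Gamma(t)\sigma_{\infty}(t)\in\Lp^2([t_0,+\infty[)$ is precisely the integrability condition required by Theorem~\ref{3.4}\,(ii) for the reparametrized system. With $s=\theta(t)$ one has $\tilde{\sigma}_{\infty}(s)=\sigma_{\infty}(\theta^{-1}(s))\sqrt{\Gamma(\theta^{-1}(s))}$, and since $\theta'=\Gamma>0$ gives $ds=\Gamma(t)\,dt$, the substitution yields
\[
\int_{0}^{\infty} s\,\tilde{\sigma}_{\infty}^2(s)\,ds = \int_{t_0}^{\infty}\theta(t)\,\Gamma^2(t)\,\sigma_{\infty}^2(t)\,dt = \int_{t_0}^{\infty}\big(\sqrt{\theta(t)}\,\Gamma(t)\,\sigma_{\infty}(t)\big)^2\,dt<+\infty.
\]
Hence $s\mapsto s\,\tilde{\sigma}_{\infty}^2(s)\in\Lp^1(\R_+)$, exactly the assumption under which Theorem~\ref{3.4}\,(ii) applies; this is the same equivalence already exploited in the proof of Theorem~\ref{maxai}.

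Invoking Theorem~\ref{3.4}\,(ii) with $f\in C^2(\H)$ then gives $s\mapsto s\,\Vert\nabla f(Z(s))\Vert^2\in\Lp^1(\R_+)$ almost surely, that is $\int_0^{\infty} s\,\Vert\nabla f(Z(s))\Vert^2\,ds<+\infty$ a.s.. Changing variables back with $s=\theta(t)$, $ds=\Gamma(t)\,dt$, and $\theta(t_0)=0$, we obtain
\[
\int_{t_0}^{\infty}\theta(t)\,\Gamma(t)\,\Vert\nabla f(Z(\theta(t)))\Vert^2\,dt = \int_{0}^{\infty} s\,\Vert\nabla f(Z(s))\Vert^2\,ds<+\infty\quad\text{a.s.},
\]
and recalling $Z(\theta(t))=X(t)+\Gamma(t)V(t)$ identifies the left-hand integrand with $\theta(t)\,\Gamma(t)\,\Vert\nabla f(X(t)+\Gamma(t)V(t))\Vert^2$, which is the claim.

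The content here is essentially bookkeeping built on the first-order estimate, so I do not expect a genuine obstacle; the only point requiring care is that the change of variables must be carried out pathwise, so that the $\Lp^1$ property of the sample path $s\mapsto s\,\Vert\nabla f(Z(s))\Vert^2$ transfers to the integral in $t$ for almost every $\omega$. Since $\theta$ is a \emph{deterministic} $C^1$ increasing bijection from $[t_0,+\infty[$ onto $[0,+\infty[$ with $\theta'=\Gamma>0$, the substitution is valid for each fixed sample path and the null set is preserved, so this causes no difficulty; the one computation that must be spelled out is the integrability equivalence displayed above.
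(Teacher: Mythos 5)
Your proposal is correct and follows essentially the same route as the paper: verify that the hypothesis $\sqrt{\theta}\,\Gamma\,\sigma_\infty \in \Lp^2([t_0,+\infty[)$ is equivalent, under the time change $s=\theta(t)$, to $s\mapsto s\,\tilde{\sigma}_\infty^2(s)\in\Lp^1(\R_+)$, apply Theorem~\ref{3.4} to the rescaled process $Z$, and change variables back using $Y=X+\Gamma V=Z\circ\theta$. The only difference is presentational — you spell out the substitution computation and the pathwise validity of the change of variables, which the paper leaves implicit.
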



\proof{}
Consider \eqref{inv} and the technique used in Theorem~\ref{trajectory}. We have that $t\mapsto\theta(t)\Gamma^2(t)\sigma_{\infty}^2(t)\in \Lp^1([t_0,+\infty[)$ is equivalent to $s\mapsto s\tilde{\sigma}_{\infty}^2(s)\in\Lp^1(\R_+)$. Therefore, we can use Theorem~\ref{3.4} to state that $$\int_{0}^{\infty} s\Vert\nabla f({Z(s)})\Vert^2ds<+\infty\quad a.s..$$
Using the time scaling $\tau\equiv\theta$ and making the change of variable $\theta(t)=s$ in the previous integral, we obtain $$\int_{t_0}^{\infty}\theta(t)\Gamma(t)\Vert\nabla f({Y(t)})\Vert^2dt<+\infty\quad a.s..$$
Recalling that in the averaging we impose that ${Y}={X}+\Gamma {V}$, we conclude.
\endproof

\subsection{Fast convergence under \texorpdfstring{ $\alpha>3, \gamma(t)=\frac{\alpha}{t}$}{} and \texorpdfstring{$\beta(t)=\frac{t}{\alpha-1}$}{}}\label{fastconver}
In the following, we show fast convergence results in expectation and in almost sure sense. \tcm{This result imposes an integrability condition on the diffusion term to ensure a convergence rate of $\frac{1}{t^2}$ for the function values. This is highly desirable, as it represents the fastest convergence rate we can expect to achieve for gradient-based second-order dynamics featuring inertia when applied to a general convex function. Besides, these results match those in \cite{fast} when there is no noise.}

\begin{corollary}[Case $\frac{\alpha}{t}$] \label{corat}
Let $\nu\geq 2,\alpha>3$ and consider the dynamic \eqref{ISIHD-S} with initial data ${X_0},{V_0}\in \Lp^{\nu}(\Omega; \H)$, in the case where $\gamma(t)=\frac{\alpha}{t}$ and $\beta(t)=\frac{t}{\alpha-1}$. {Besides, consider that $f$ and $\sigma$ satisfy \eqref{H01} and \eqref{H}}. Moreover, let $f\in C^2(\H)$ and $t\mapsto t^2\sigma_{\infty}(t)\in \Lp^2([t_0,+\infty[)$. Then the solution trajectory $({X,V})\in S_{\H\times\H}^{\nu}[t_0]$ is unique and satisfies: \begin{enumerate}[label=(\roman*)]
    \item $f({X(t)})-\min f=o(t^{-2})$ a.s..
    \item $\mathbb{E}[f({X(t)})-\min f]=\mathcal{O}(t^{-2})$.
    \item $$\int_{t_0}^{\infty}t^3\Big\Vert\nabla f\left({X(t)}+\frac{t}{\alpha-1}{V(t)}\right)\Big\Vert^2dt<+\infty\quad a.s..$$
\end{enumerate}
\end{corollary}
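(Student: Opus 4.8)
The plan is to specialize the abstract rates of Theorem~\ref{maxai} and Theorem~\ref{thetagamma} to $\gamma(t)=\frac{\alpha}{t}$, after first computing all the auxiliary quantities in closed form. With $\gamma(t)=\frac{\alpha}{t}$ one gets $p(t)=(t/t_0)^{\alpha}$, hence $\int_t^{\infty}\frac{ds}{p(s)}=\frac{t_0^{\alpha}}{(\alpha-1)t^{\alpha-1}}$ (finite exactly because $\alpha>1$), so that $\Gamma(t)=\frac{t}{\alpha-1}=\beta(t)$, consistent with the standing requirement $\beta\equiv\Gamma$. Integrating yields $\theta(t)=\int_{t_0}^t\Gamma(u)du=\frac{t^2-t_0^2}{2(\alpha-1)}$, so $\theta(t)\sim\frac{t^2}{2(\alpha-1)}$, while $A(t)=\int_{t_0}^t\frac{\alpha-1}{u}du=(\alpha-1)\ln(t/t_0)$ gives $e^{-A(t)}=(t_0/t)^{\alpha-1}$. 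First I would record that the integrability hypotheses of Theorem~\ref{maxai} and Theorem~\ref{thetagamma}, namely $\sqrt{\theta}\,\Gamma\sigma_{\infty}\in\Lp^2$, reduce here to the stated assumption $t\mapsto t^2\sigma_{\infty}(t)\in\Lp^2([t_0,+\infty[)$, since $\sqrt{\theta(t)}\,\Gamma(t)$ is of order $t^2$.

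For item (ii), I would evaluate the bound $\mathcal O(\max\{e^{-A(t)},I[\tfrac1\theta](t)\})$ of Theorem~\ref{maxai}. Since $\tfrac{1}{\theta(u)}$ is of order $u^{-2}$, a direct computation using $\frac{e^{A(u)}}{\Gamma(u)}=\frac{\alpha-1}{t_0^{\alpha-1}}u^{\alpha-2}$ gives $I[\tfrac1\theta](t)\sim\frac{\alpha-1}{\alpha-3}t^{-2}$, where $\alpha>3$ is precisely what makes $\int^t u^{\alpha-4}du$ grow like $t^{\alpha-3}$ and hence keeps $I[\tfrac1\theta](t)$ of order $t^{-2}$. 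Since $\alpha-1>2$, the term $e^{-A(t)}=(t_0/t)^{\alpha-1}$ is $o(t^{-2})$, so the maximum is of order $t^{-2}$, yielding $\EE[f(X(t))-\min f]=\mathcal O(t^{-2})$. For item (iii), I would apply Theorem~\ref{thetagamma} directly: under the same integrability condition it gives $\int_{t_0}^{\infty}\theta(u)\Gamma(u)\|\nabla f(X(u)+\Gamma(u)V(u))\|^2du<+\infty$ a.s.. As $\theta(u)\Gamma(u)$ is of order $u^3$ and $\Gamma(u)=\frac{u}{\alpha-1}=\beta(u)$, one has $\theta(u)\Gamma(u)\ge c\,u^3$ for $u$ large; splitting the integral at some $t_1>t_0$, the tail is bounded by $\frac1c\int_{t_1}^{\infty}\theta\Gamma\|\nabla f(\cdots)\|^2du<+\infty$, while the piece over $[t_0,t_1]$ is finite by a.s. continuity of $t\mapsto\nabla f(X(t)+\Gamma(t)V(t))$ (recall $(X,V)$ has continuous sample paths and $\nabla f$ is Lipschitz). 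This gives (iii).

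The hard part is item (i), the almost sure $o(t^{-2})$ rate, since Theorem~\ref{maxai} only furnishes a rate in expectation. Here I would return to the underlying first-order process: the integrability condition is equivalent to $s\mapsto s\tilde\sigma_{\infty}^2(s)\in\Lp^1(\R_+)$, so Theorem~\ref{3.4}(iii) applies to $Z$ in \eqref{inv}/\eqref{CSGD} and yields $f(Z(s))-\min f=o(s^{-1})$ a.s.. Time scaling $\tau\equiv\theta$ then gives $\phi(t)\eqdef f(Y(t))-\min f=o(\theta(t)^{-1})=o(t^{-2})$ a.s.. The remaining, and most delicate, task is to transfer this rate from $Y$ to $X$ through the averaging identity \eqref{aver}. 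I would split $f(X(t))-\min f$ exactly as in the proof of Theorem~\ref{maxai}: the discrepancy $f(X(t))-f(\int_{t_0}^t Y\,d\mu_t)$ is bounded by $\|\nabla f(X(t))\|\,\|\xi(t)\|$, which is $o(t^{-2})$ a.s. because $\|\xi(t)\|$ is of order $e^{-A(t)}=o(t^{-2})$ (from \eqref{chi} and $V_0\in\Lp^{\nu}$) while $\|\nabla f(X(t))\|$ is a.s. bounded via the a.s. boundedness of the first-order trajectory (Theorem~\ref{3.1}) transported through the averaging, together with Lipschitzness of $\nabla f$.

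The crux is then the second piece. Jensen's inequality gives $f(\int_{t_0}^t Y\,d\mu_t)-\min f\le\int_{t_0}^t\phi\,d\mu_t=e^{-A(t)}\phi(t_0)+I[\phi](t)$, where the Dirac contribution is clearly $o(t^{-2})$, so everything reduces to showing that $I[\cdot]$ maps $o(u^{-2})$ into $o(t^{-2})$. I would prove this by an $\varepsilon$-splitting: fix $\varepsilon>0$, choose $T_\varepsilon$ with $0\le\phi(u)\le\varepsilon u^{-2}$ for $u\ge T_\varepsilon$, bound the contribution of $[t_0,T_\varepsilon]$ by a constant times $e^{-A(t)}=o(t^{-2})$, and the tail by $\varepsilon\,I[u^{-2}](t)$ where the explicit computation above gives $I[u^{-2}](t)\sim\frac{\alpha-1}{\alpha-3}t^{-2}$; taking $\limsup_{t\to\infty}t^2 I[\phi](t)$ and then $\varepsilon\downarrow0$ yields $I[\phi](t)=o(t^{-2})$ a.s., and combining the two pieces proves (i). I expect the bookkeeping in this final transfer, namely controlling the weighted average against the measure $\mu_t$ that concentrates near $t$, to be the main technical obstacle; it is a quantitative, little-$o$ preserving refinement of the Toeplitz/Ces\`aro-type argument encoded in Lemma~\ref{ab}.
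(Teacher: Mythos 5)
Your proposal is correct and follows essentially the same route as the paper: items (ii) and (iii) are obtained by specializing Theorems~\ref{maxai} and~\ref{thetagamma} exactly as the paper does, and item (i) uses the identical decomposition — convexity/Cauchy--Schwarz against $\|\xi(t)\|=\mathcal{O}(t^{1-\alpha})$ for the discrepancy term, then Jensen's inequality for the averaged term — with the little-$o$ transfer handled by the same Toeplitz-type mechanism, since your $\varepsilon$-splitting is precisely the proof of Lemma~\ref{ab}, which the paper invokes with $a(t)=\frac{\alpha-3}{t}$ and $b(u)=u^2\big(f(Y(u))-\min f\big)$ after rewriting the weighted average with weight $u^{\alpha-4}$. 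The only cosmetic difference is that you re-derive that lemma inline rather than cite it.
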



\proof{}
Consider \eqref{inv} with $\Gamma(t)=\frac{t}{\alpha-1}$ and $\theta(t)=\frac{t^2-t_0^2}{2(\alpha-1)}$. Let $\tilde{\sigma}(s,\cdot)=\sigma(\theta^{-1}(s),\cdot)\sqrt{\Gamma(\theta^{-1}(s))}$. Notice that $t\mapsto t^2\sigma_{\infty}(t)\in \Lp^2([t_0,+\infty[)$ is equivalent to $s\mapsto s\tilde{\sigma}_{\infty}^2(s)\in \Lp^1(\R_+)$. We apply Theorem~\ref{3.4} to deduce that $$f({Z(s)})-\min f=o(s^{-1}) \text{ a.s..}$$
Using the time scaling $\tau\equiv \theta$ and then the averaging technique as in the proof of Theorem~\ref{trajectory}, we have that $$f({Y(t)})-\min f=o(t^{-2}) \text{ a.s..}$$
Moreover, it holds that $${X(t)}=\int_{t_0}^t {Y(u)}d\mu_t(u)+\xi(t).$$
\begin{enumerate}[label=(\roman*)]
    \item 
Now we prove the first point in the following way:
\begin{align*}
    t^2(f({X(t)})-\min f)&=t^2\left(f({X(t)})-f\left(\int_{t_0}^t {Y(u)}d\mu_t(u)\right)\right)+t^2\left(f\left(\int_{t_0}^t {Y(u)}d\mu_t(u)\right)-\min f\right).
\end{align*}
Let us bound the first term using the convexity of $f$: \begin{align*}
    t^2\left(f({X(t)})-f\left(\int_{t_0}^t {Y(u)}d\mu_t(u)\right)\right)&\leq t^2\Vert\nabla f({X(t)})\Vert\Vert\xi(t)\Vert\\
    &\leq t^2\Vert\xi(t)\Vert(L\Vert {X(t)}\Vert+\Vert\nabla f(0)\Vert)\\
    &\leq t^2\Vert\xi(t)\Vert\left(L\sup_{t\geq t_0}\Vert {X(t)}\Vert+\Vert\nabla f(0)\Vert\right).
\end{align*}
\smallskip

Let us recall that $\sup_{s\geq 0}\Vert {Z}(s)\Vert<+\infty$ a.s.. Due to the time scaling and averaging, it is direct to check that $\sup_{t\geq t_0}\Vert {X(t)}\Vert<+\infty$ a.s.. On the other hand, $\Vert\xi(t)\Vert=\mathcal{O}(t^{1-\alpha})$ a.s.. Therefore, we have \begin{equation}
    t^2\left(f({X(t)})-f\left(\int_{t_0}^t {Y(u)}d\mu_t(u)\right)\right)=\mathcal{O}(t^{3-\alpha})\quad a.s..
\end{equation}
Now let us bound the second term using Jensen's inequality, \begin{align*}
    t^2\left(f\left(\int_{t_0}^t {Y(u)}d\mu_t(u)\right)-\min f\right)&\leq t^2\left(\int_{t_0}^t [f({Y(u)})-\min f] d\mu_t(u)\right)\\
    &=\frac{t_0^{\alpha-1}}{t^{\alpha-3}}[f({Y(t_0)})-\min f]\\
    &+\frac{\alpha-1}{t^{\alpha-3}}\int_{t_0}^t u^{\alpha-4}(u^2(f({Y(u)})-\min f))du.
\end{align*}
In order to calculate the limit of this second term, let $a(t)=\frac{\alpha-1}{t}, b(u)=u^2(f({Y(u)})-\min f)$, by Lemma~\ref{ab} we have that $$\lim_{t\rightarrow\infty}\frac{\alpha-1}{t^{\alpha-1}}\int_{t_0}^t u^{\alpha-2}b(u)du=0 \quad a.s..$$
Since $\alpha>3$, we also have that \begin{equation}\label{alpha-3}
    \lim_{t\rightarrow\infty}\frac{\alpha-3}{t^{\alpha-3}}\int_{t_0}^t u^{\alpha-4}b(u)du=0\quad a.s..
\end{equation}
Therefore, we conclude that $$\lim_{t\rightarrow\infty}t^2(f({X(t)})-\min f)=0\quad a.s..$$
\item By Theorem~\ref{maxai} in the case $\gamma(t)=\frac{\alpha}{t}$, we have that $e^{-A(t)}=t_0^{\alpha-1}t^{1-\alpha}$ and $\theta(t)=\frac{t^2-t_0^2}{2(\alpha-1)}$. On the other hand $$I\left[\frac{1}{\theta}\right](t)=2(\alpha-1)^2t^{1-\alpha}\int_{t_0}^t\frac{u^{\alpha-2}}{u^2-t_0^2}=\mathcal{O}(t^{1-\alpha}+t^{-2}).$$
 Since $\alpha>3$, we have that $\mathcal{O}(t^{1-\alpha})$ is also $\mathcal{O}(t^{-2})$, and we conclude that $$\mathbb{E}(f({X(t)})-\min f)=\mathcal{O}(t^{-2}).$$
 \item This point follows directly from Theorem~\ref{thetagamma} in the case $\gamma(t)=\frac{\alpha}{t}$.
\end{enumerate}
\endproof

\subsection{Convergence rate under Polyak-{\L}ojasiewicz inequality}
In this subsection, we show a local convergence rate under Polyak-{\L}ojasiewicz inequality. The Polyak-{\L}ojasiewicz property is a special case of the {\L}ojasiewicz property (see \cite{loj1,loj2,loj3}) and is commonly used to prove linear convergence of gradient descent algorithms.

\begin{definition}[Polyak-{\L}ojasiewicz inequality]\label{def:lojq}
Let $f:\H\rightarrow\R$ be a differentiable function with $\calS\neq\emptyset$. Then, $f$ satisfies the Polyak-{\L}ojasiewicz (P{\L}) inequality on $\calS$, if there exists $r>\min f$ and $\mu > 0$ such that
\begin{equation}\label{li}
2\mu(f(x)-\min f)\leq \norm{\nabla  f(x)}^2,\quad \forall x \in  [\min f < f < r] ,
\end{equation}
 and we will write $f \in \PL_{\mu}(\calS)$. 

\end{definition}
\begin{theorem}\label{crpolyak}
Let $\nu\geq 2$ and consider the dynamic \eqref{ISIHD-S} with initial data ${X_0},{V_0}\in\Lp^{\nu}(\Omega;\H)$, where $f$ satisfies \eqref{H01}, and $\sigma$ satisfies \eqref{H}. Besides, $f\in \PL_{\mu}(\calS)$ and suppose that either $\H$ is finite dimensional or $f\in C^2(\H)$. Let also, $\gamma\equiv \sqrt{2\mu}$, $\beta\equiv\Gamma\equiv\frac{1}{\sqrt{2\mu}}$, and such that $\sigma_{\infty}\in \Lp^2([t_0,+\infty[)$.\smallskip

Then the solution trajectory $({X,V})\in S_{\H\times \H}^{\nu}[t_0]$ is unique. Moreover, letting $\delta>0$, then there exists $\td>t_0, \sd>0, K_{\mu,\delta},C_l,C_f>0$ such that:
 \begin{equation}
    \EE(f({X(t)})-\min f)\leq K_{\mu,\delta}e^{-\frac{\mu}{2}(t-\td)}+\frac{1}{\mu}l_{\delta}\pa{\frac{t+3\td-4t_0}{4\mu}}+C_f\sqrt{\delta}, \quad \forall t>\td,
\end{equation}
where $$l_{\delta}(s)=\frac{L}{2}\sigma_{\infty}^2(s)+C_l\sqrt{\delta}\frac{\sigma_{\infty}^2(s)}{2\sqrt{\int_{\sd}^s \sigma_{\infty}^2(u)du}}.$$

Besides, if $f\in \PL_{\mu}(\calS)$ holds on the entire space (i.e. $r=+\infty$), then we have that there exists $K_{\mu}>0$ such that:
 \begin{equation}
    \EE(f({X(t)})-\min f)\leq K_{\mu}e^{-\frac{\mu}{2}(t-t_0)}+\frac{L}{2\mu}\sigma_{\infty}^2\pa{\frac{t-t_0}{4\mu}}, \quad \forall t>t_0,
\end{equation}
\end{theorem}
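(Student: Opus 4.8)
The plan is to exploit the time-scaling and averaging correspondence developed in this section, carrying out the delicate Polyak-{\L}ojasiewicz analysis on the \emph{first-order} (hence Markovian) process and only then transferring the outcome to $X$. \textbf{Step 1 (setup).} Since $\gamma\equiv\sqrt{2\mu}$ is constant, one computes $p(t)=e^{\sqrt{2\mu}(t-t_0)}$, so that $\Gamma(t)=p(t)\int_t^{\infty}p(s)^{-1}ds\equiv\frac{1}{\sqrt{2\mu}}$, consistently with the prescribed $\beta\equiv\Gamma\equiv\frac{1}{\sqrt{2\mu}}$; hence $\theta(t)=\int_{t_0}^t\Gamma(u)du=\frac{t-t_0}{\sqrt{2\mu}}$, $A(t)=\int_{t_0}^t\Gamma(u)^{-1}du=\sqrt{2\mu}(t-t_0)$, $a\equiv\sqrt{2\mu}$, and in particular $e^{-A(t)}=e^{-\sqrt{2\mu}(t-t_0)}$. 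Because $\Gamma$ is bounded, the assumption $\sigma_\infty\in\Lp^2([t_0,+\infty[)$ is exactly the integrability needed to run the machinery of Theorem~\ref{trajectory}, yielding existence and uniqueness of $(X,V)\in S_{\H\times\H}^{\nu}[t_0]$ as the pullback of the first-order SDE \eqref{inv} through the time change $\tau\equiv\theta$ and the averaging identity \eqref{aver}.

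\textbf{Step 2 (first-order PL bound).} Writing $\tilde\sigma(s,\cdot)=\sigma(\theta^{-1}(s),\cdot)\sqrt{\Gamma(\theta^{-1}(s))}$, I would analyze \eqref{inv} by applying It\^o's formula to $s\mapsto f(Z(s))$; taking expectations kills the martingale term and, bounding the It\^o correction by $\tr(\nabla^2 f(Z)\tilde\Sigma)\le L\,\tilde\sigma_\infty^2$, gives $\frac{d}{ds}\EE[f(Z(s))-\min f]\leq-\EE[\|\nabla f(Z(s))\|^2]+\frac{L}{2}\tilde\sigma_\infty^2(s)$. The obstacle is that the PL inequality \eqref{li} holds only on the tube $[\min f<f<r]$, so $\|\nabla f\|^2\geq 2\mu(f-\min f)$ cannot be used globally. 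I would therefore introduce the first exit time of $f(Z)$ from this tube, run a Gr\"onwall argument on the stopped process to obtain an exponential factor $e^{-2\mu(s-\sd)}$ together with the filtered noise $\frac{L}{2}\int_{\sd}^s e^{-2\mu(s-v)}\tilde\sigma_\infty^2(v)dv$, and control the probability of the ``bad'' event that the trajectory has left the tube by a quantity of order $\sqrt\delta$ via a maximal/moment estimate (using $\sup_s\EE\|Z(s)\|^{\nu}<\infty$). This is precisely where the threshold $\sd$, the residual $C\sqrt\delta$, and the correction appearing in $l_\delta$ originate.

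\textbf{Step 3 (transfer by averaging and Jensen).} Setting $Y(t)=Z(\theta(t))$ and $\td=\theta^{-1}(\sd)=t_0+\sqrt{2\mu}\,\sd$, the Step~2 bound pulls back to $\EE[f(Y(t))-\min f]$ with $s$ replaced by $\theta(t)$, noting $e^{-2\mu\theta(t)}=e^{-A(t)}$. Then, exactly as in the proof of Theorem~\ref{maxai}, I would use $X(t)=\int_{t_0}^t Y(u)\,d\mu_t(u)+\xi(t)$ with $\mu_t$ as in \eqref{medida} and $\xi$ as in \eqref{chi}, and split
\[
\EE[f(X(t))-\min f]=\EE\Big[f(X(t))-f\big(\textstyle\int_{t_0}^t Y\,d\mu_t\big)\Big]+\EE\Big[f\big(\textstyle\int_{t_0}^t Y\,d\mu_t\big)-\min f\Big].
\]
The first summand is controlled by convexity together with $\|\nabla f(X(t))\|\leq L\sup_t\|X(t)\|+\|\nabla f(0)\|$ times $\|\xi(t)\|=\mathcal{O}(e^{-A(t)})$, and the second by Jensen's inequality as $\int_{t_0}^t\EE[f(Y(u))-\min f]\,d\mu_t(u)$. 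Integrating the Step~2 bound against $\mu_t$ then produces the exponential term, which one bounds by $K_{\mu,\delta}e^{-\frac{\mu}{2}(t-\td)}$ (after absorbing the polynomial prefactor created by the averaging kernel and the contribution of the initial phase $[t_0,\td]$), and, via a monotonicity/mean-value estimate on the resulting double time integral of the noise, the term $\frac{1}{\mu}l_\delta$ evaluated at the effective instant $\frac{t+3\td-4t_0}{4\mu}$; the leftover localization errors collect into $C_f\sqrt\delta$.

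\textbf{Step 4 (global case) and the main difficulty.} When $f\in\PL_\mu(\calS)$ holds on all of $\H$ ($r=+\infty$), the exit time of Step~2 is a.s. infinite, so the localization is vacuous: one may let $\delta\to0$ and take $\sd=t_0$ (hence $\td=t_0$), whereupon the $\sqrt\delta$ and correction terms disappear and the bound reduces to $K_\mu e^{-\frac{\mu}{2}(t-t_0)}+\frac{L}{2\mu}\sigma_\infty^2\!\big(\frac{t-t_0}{4\mu}\big)$. The main obstacle throughout is exactly this localization: unlike the Markovian process $Z$, the averaged process $X(t)$ depends on the entire history $\{Y(u)\}_{u\le t}$, so its exit times from the PL tube are intractable and a direct stopping-time analysis of \eqref{ISIHD-S} is essentially out of reach. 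The crux — and the reason time-scaling and averaging is indispensable here — is that the whole localization is performed on $Z$, where It\^o calculus and Gr\"onwall apply cleanly, and is then transported to $X$ through the \emph{linear} averaging identity \eqref{aver}, which is compatible with convexity through Jensen's inequality.
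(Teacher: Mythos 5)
Your proposal is correct and takes essentially the same route as the paper: the Polyak-{\L}ojasiewicz analysis is performed on the first-order process $Z$ of \eqref{inv} (the paper simply invokes \cite[Theorem~4.5(i-b)]{mio} for the localized bound you re-derive in Step~2, including the origin of $\sd$, $l_\delta$ and the $C_f\sqrt{\delta}$ residual), and the result is then transferred to $X$ through the time scaling $\tau\equiv\theta$, the averaging identity, Jensen's inequality, and a midpoint split of the noise convolution using that $l_\delta$ is decreasing and integrable --- exactly your ``mean-value'' estimate. The only (immaterial) difference is in Step~3: you keep the averaging anchored at $t_0$ and absorb the initial segment $[t_0,\td]$ and the degenerate polynomial prefactor into the exponential term, whereas the paper restarts the averaging at $\td$, writing $X(t)=\int_{\td}^t Y(u)\,d\tilde{\mu}_t(u)+\tilde{\xi}(t)$ with $\tilde{\xi}(t)=-\frac{1}{c}e^{-c(t-\td)}V(\td)$, then optimizes $\lambda=\tfrac12$, $c=\sqrt{2\mu}$.
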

\begin{remark}
    If $f$ is $\mu-$strongly convex, then $f\in \PL_{\mu}(\calS)$ holds on the entire space (i.e. $r=+\infty$). 
\end{remark}
\begin{remark}
    \tcm{For a proper discussion on the arbitrarily small (but non-zero) term $C_f\sqrt{\delta}$, we refer to \cite[Section 4]{mio}.}
\end{remark}
\proof{}
 Consider the dynamic \eqref{ISIHD-S} with $\gamma\equiv c, \beta\equiv\Gamma\equiv\frac{1}{c}$, where $c>0$ is a constant that will be fixed later.\smallskip
 
 Let us also define $\theta(t)\eqdef \int_{t_0}^t \Gamma(u)du=\frac{t-t_0}{c}$ and $\tilde{\sigma}(s,\cdot)\eqdef\sigma(\theta^{-1}(s),\cdot)\sqrt{\Gamma(\theta^{-1}(s))}$. Then $\sigma_{\infty}\in \Lp^2([t_0,+\infty[)$ is equivalent to $\tilde{\sigma}_{\infty}\in\Lp^2(\R_+)$.
Now consider the dynamic: \smallskip

\begin{equation}
\begin{cases}
d{Z(s)}&=-\nabla f({Z(s)})+\tilde{\sigma}(s,{Z(s)})dW(s), \quad s>0,\\
{Z(0)}&={X_0}+\Gamma(t_0){V_0}.
\end{cases}    
\end{equation}

Let $\delta>0$ and apply the result of \cite[Theorem~4.5(i-b)]{mio} (with coefficient $\sqrt{2\mu}$), that is, there exists $\sd>0$ such that for every $\lambda \in ]0,1[$,
\begin{equation}
\begin{aligned}
\EE\pa{f({Z(s)})-\min f}&\leq e^{-2\mu(s-\sd)}\EE(f({Z}(\sd))-\min f)\\
&+e^{-2\mu (1-\lambda)(s-\sd)}\pa{\frac{LC_{\infty}^2}{2}+C_lC_{\infty}\sqrt{\delta}}\\
&+\frac{l_{\delta}(\sd+\lambda(s-\sd))}{2\mu}+C_f\sqrt{\delta}, \qquad\qquad \forall s>\sd,
\end{aligned}
\end{equation}
where $C_{\infty},C_l,C_f>0$ and the establishment of $l_{\delta}$ are detailed in \cite[Section 4.2]{mio}, in particular $C_{\infty}=\sqrt{\int_{t_0}^{+\infty}\sigma_{\infty}^2(s)ds}$.\smallskip

Consider the time scaling $\tau\equiv\theta$, ${Y(t)}={Z}(\theta(t))$ and $\td>t_0$ such that $\theta(\td)=\sd$ (i.e. $\td=c\sd+t_0$), we have that:  \begin{equation}
\begin{aligned}\label{localy}
\EE\pa{f({Y(t)})-\min f}&\leq e^{-2\mu(\theta(t)-\sd)}\EE(f({Y}(\td))-\min f)\\
&+e^{-2\mu (1-\lambda)(\theta(t)-\sd)}\pa{\frac{LC_{\infty}^2}{2}+C_lC_{\infty}\sqrt{\delta}}\\
&+\frac{l_{\delta}(\sd+\lambda(\theta(t)-\sd))}{2\mu}+C_f\sqrt{\delta}, \qquad\qquad \forall t>\td.
\end{aligned}
\end{equation}
Let $a(t)=c$ and $A(t)=c(t-\td)$. Now, we consider the averaging as in \eqref{initia} but change the initial condition to $\td$. Thus, we have \begin{equation}
    {X(t)}=\int_{\td}^t {Y(u)}d\tilde{\mu}_t(u)+\tilde{\xi}(t),
\end{equation}
where $\tilde{\mu}_t$ is the probability measure on $[\td,t]$ defined by \begin{equation}\label{medida1}
    \tilde{\mu}_t=e^{-c(t-\td)}\delta_{\td}+ce^{c(u-t)}du,
\end{equation}
where $\delta_{\td}$ is the Dirac measure at $\td$ and \begin{equation}\label{chi1}
 \tilde{\xi}(t)\eqdef-\frac{1}{c}e^{-c(t-\td)}{V}(\td).   
\end{equation}
Then
$$\EE(f({X(t)})-\min f)=\EE\pa{f({X(t)})-f\pa{\int_{\td}^t {Y(u)}d\mu_t(u)}}+\EE\pa{f\pa{\int_{\td}^t {Y(u)}d\mu_t(u)}-\min f}.$$
We can bound the first term using convexity and Cauchy-Schwarz inequality in the following way \begin{align*}\EE\pa{f({X(t)})-f\pa{\int_{\td}^t {Y(u)}d\tilde{\mu}_t(u)}}&\leq\sqrt{\EE(\Vert\nabla f({X(t)})\Vert^2)} \sqrt{\EE(\Vert\tilde{\xi}(t)\Vert^2)}\\
&\leq \frac{\sqrt{\EE(\Vert {V}(\td)\Vert^2)}}{c}\sqrt{2\Vert\nabla f(0)\Vert^2+2L^2\EE\pa{\sup_{t\geq t_0}\Vert {X(t)}\Vert^2}}e^{-c(t-\td)},
\end{align*}
where 
$\EE(\sup_{t\geq t_0}\Vert {X(t)}\Vert^2)<+\infty$ as mentioned in Corollary~\ref{corat}.

On the other hand, we can bound the second term using Jensen's inequality and then \eqref{localy}
\begin{align*}
    \EE\pa{f\pa{\int_{\td}^t {Y(u)}d\tilde{\mu}_t(u)}-\min f}&\leq \int_{\td}^t \EE(f({Y(u)}-\min f))d\tilde{\mu}_t(u)\\
    &\leq \int_{\td}^t  e^{-2\mu(\theta(u)-\sd)}\EE(f({Y}(\td))-\min f)d\tilde{\mu}_t(u)\\
&+\int_{\td}^t e^{-2\mu (1-\lambda)(\theta(u)-\sd)}\pa{\frac{LC_{\infty}^2}{2}+C_lC_{\infty}\sqrt{\delta}}d\tilde{\mu}_t(u)\\
&+\int_{\td}^t\frac{l_{\delta}(\sd+\lambda(\theta(u)-\sd))}{2\mu}d\tilde{\mu}_t(u)+C_f\sqrt{\delta}  \\
&=\pa{\EE(f({Y}(\td))-\min f)+\pa{\frac{LC_{\infty}^2}{2}+C_lC_{\infty}\sqrt{\delta}}+\frac{l_{\delta}(\sd)}{2\mu}}e^{-c(t-\td)}\\
&+c\EE(f({Y}(\td))-\min f)e^{-ct}\int_{\td}^t e^{-2\mu(\theta(u)-\sd)} e^{cu}du\\
&+c\pa{\frac{LC_{\infty}^2}{2}+C_lC_{\infty}\sqrt{\delta}}e^{-ct}\int_{\td}^t e^{-2\mu (1-\lambda)(\theta(u)-\sd)}  e^{cu}du\\
&+\frac{c}{2\mu}e^{-ct}\int_{\td}^t l_{\delta}(\sd+\lambda(\theta(u)-\sd))e^{cu} du+C_f\sqrt{\delta},\quad \forall t>\td.
\end{align*}
We bound the first integral as follows: \begin{align*}
    e^{-ct}\int_{\td}^t e^{-2\mu(\theta(u)-\sd)} e^{cu}du\leq e^{2\mu\pa{\frac{t_0}{c}+\sd}}e^{-\frac{2\mu}{c}t}.
\end{align*}
The second integral in the same way \begin{align*}
    e^{-ct}\int_{\td}^t e^{-2\mu (1-\lambda)(\theta(u)-\sd)}  e^{cu}du\leq e^{2\mu(1-\lambda)\pa{\frac{t_0}{c}+\sd}}e^{-\frac{2\mu(1-\lambda)}{c}t}.
\end{align*}
To treat the third integral we are going to split the integral in two in order to find a useful convergence rate. Let us recall that $l_{\delta}\in\Lp^1([\sd,+\infty[)$ and that $l_{\delta}$ is decreasing. Let us define $\varphi_{\lambda,c,\delta}(t)\eqdef \sd+\lambda\pa{\frac{t+\td-2t_0}{2c}-\sd}$, then 
\begin{align*}
    e^{-ct}\int_{\td}^t l_{\delta}(\sd+\lambda(\theta(u)-\sd))e^{cu} du&=e^{-ct}\int_{\td}^{\frac{\td+t}{2}} l_{\delta}(\sd+\lambda(\theta(u)-\sd))e^{cu} du\\
    &+e^{-ct}\int_{\frac{\td+t}{2}}^t l_{\delta}(\sd+\lambda(\theta(u)-\sd))e^{cu} du\\
    &\leq \frac{c}{\lambda}e^{\frac{c\td}{2}}C_{\infty}e^{-\frac{ct}{2}}+l_{\delta}(\varphi_{\lambda,c,\delta}(t)).
\end{align*}
Now that we have bounded all the terms, we have the following bound \begin{align*}
    \EE(f({X(t)})-\min f)&\leq \frac{\sqrt{\EE(\Vert {V}(\td)\Vert^2)}}{c}\sqrt{2\Vert\nabla f(0)\Vert^2+2L^2\EE\pa{\sup_{t\geq t_0}\Vert {X(t)}\Vert^2}}e^{-c(t-\td)}\\
    &+\pa{\EE(f({Y}(\td))-\min f)+\pa{\frac{LC_{\infty}^2}{2}+C_lC_{\infty}\sqrt{\delta}}+\frac{l_{\delta}(\sd)}{2\mu}}e^{-c(t-\td)}\\
    &+c\EE(f({Y}(\td))-\min f)e^{2\mu\pa{\frac{t_0+\td}{c}+\sd}}e^{-\frac{2\mu}{c}(t-\td)}\\
    &+c\pa{\frac{LC_{\infty}^2}{2}+C_lC_{\infty}\sqrt{\delta}}e^{2\mu(1-\lambda)\pa{\frac{t_0+\td}{c}+\sd}}e^{-\frac{2\mu(1-\lambda)}{c}(t-\td)}\\
    &+\frac{c}{2\mu}\pa{\frac{c}{\lambda}C_{\infty}e^{-\frac{c(t-\td)}{2}}+l_{\delta}(\varphi_{\lambda,c,\delta}(t))}+C_f\sqrt{\delta}, \quad \forall t>\td.
\end{align*}

Letting $\lambda=\frac{1}{2}$ and $c=\sqrt{2\mu}$ we obtain
\begin{align*}
    \EE(f({X(t)})-\min f)&\leq \frac{\sqrt{\EE(\Vert {V}(\td)\Vert^2)}}{\sqrt{2\mu}}\sqrt{2\Vert\nabla f(0)\Vert^2+2L^2\EE\pa{\sup_{t\geq t_0}\Vert {X(t)}\Vert^2}}e^{-\sqrt{2\mu}(t-\td)}\\
    &+\pa{\EE(f({Y}(\td))-\min f)+\pa{\frac{LC_{\infty}^2}{2}+C_lC_{\infty}\sqrt{\delta}}+\frac{l_{\delta}(\sd)}{2\mu}}e^{-\sqrt{2\mu}(t-\td)}\\
    &+\sqrt{2\mu}\EE(f({Y}(\td))-\min f)e^{2\mu\pa{\frac{t_0+\td}{\sqrt{2\mu}}+\sd}}e^{-\sqrt{2\mu}(t-\td)}\\
    &+\sqrt{2\mu}\pa{\frac{LC_{\infty}^2}{2}+C_lC_{\infty}\sqrt{\delta}}e^{\mu\pa{\frac{t_0+\td}{\sqrt{2\mu}}+\sd}}e^{-\frac{\sqrt{2\mu}}{2}(t-\td)}\\
    &+2 C_{\infty}e^{-\frac{\sqrt{2\mu}(t-\td)}{2}}+\frac{1}{\sqrt{2\mu}}l_{\delta}(\varphi_{\frac{1}{2},\sqrt{2\mu},\delta}(t))+C_f\sqrt{\delta}, \quad \forall t>\td.
\end{align*}
Letting $K_{\mu,\delta}\eqdef\sqrt{2\mu}\pa{\frac{LC_{\infty}^2}{2}+C_lC_{\infty}\sqrt{\delta}}e^{\mu\pa{\frac{t_0+\td}{\sqrt{2\mu}}+\sd}}+2 C_{\infty}$, we conclude that \begin{equation}
    \EE(f({X(t)})-\min f)\leq K_{\mu,\delta}e^{-\frac{\sqrt{2\mu}}{2}(t-\td)}+\frac{1}{\sqrt{2\mu}}l_{\delta}(\varphi_{\frac{1}{2},\sqrt{2\mu},\delta}(t))+C_f\sqrt{\delta}, \quad \forall t>\td.
\end{equation}
\endproof

\section{From weak to strong convergence under general \texorpdfstring{$\gamma$}{} and \texorpdfstring{$\beta\equiv\Gamma$}{}}
\subsection{General result}
We consider the Tikhonov regularization of the dynamic \eqref{ISIHDN-S}, i.e., for $t>0$, 
\begin{align} \label{ISIHDN-S-TA}\tag{$\mathrm{S-ISIHD_{NS}-TA}$}
\begin{cases}
d{X(t)}&={V(t)}dt,  \nonumber\\
d{V(t)}&\in -\gamma(t){V(t)}dt-\partial F({X(t)}+\beta(t){V(t)})dt-\varepsilon(t)({X(t)}+\beta(t){V(t)})dt\\
&\phantom{=}+\sigma(t,{X(t)}+\beta(t){V(t)})dW(t),\\
{X(t_0)}&={X_0}, \quad {V(t_0)}={V_0}. \nonumber
\end{cases}
\end{align}
We show some conditions (on $\gamma,\beta,\varepsilon$) under which we can obtain strong convergence of the trajectory.

\begin{theorem}\label{trajectory1}
 Consider that $\gamma:[t_0,+\infty[\rightarrow\R_+$ satisfies \eqref{H1}. Besides, $F=f+g$ and $\sigma$ satisfy assumptions \eqref{H0} and \eqref{H}. Moreover, suppose that $g$ satisfies \eqref{Hl} and let $\nu\geq 2$. Consider \eqref{ISIHDN-S-TA} with $\beta\equiv\Gamma$ and initial data ${X_0},{V_0}\in\Lp^{\nu}(\Omega;\H)$. 

 
  Then, there exists a unique solution $({X,V})\in S_{\H\times \H}^{\nu}[t_0]$ of \eqref{ISIHDN-S-TA}.  Additionally, let ${x^{\star}}\eqdef \proj_{\calS_F}(0)$ be the minimum norm solution, and for $\varepsilon>0$ let $x_{\varepsilon}$ be the unique minimizer of $F_{\varepsilon}(x)\eqdef F(x)+\frac{\varepsilon}{2}\|x\|^2$. If we suppose that $\Gamma\sigma_{\infty}\in \Lp^2([t_0,+\infty[)$, and that $\varepsilon:[t_0,+\infty[\rightarrow\R_+$ satisfies the conditions:

\vspace{2mm}
\begin{enumerate}[label=\rm{$(T_{\arabic*}^{'})$}]
    \item \label{t1'} $\varepsilon (t) \to 0$ \textit{as} $t \to +\infty$;
    \item \label{t2'} $\displaystyle{\int_{t_0}^{+\infty} \varepsilon (t)\Gamma(t) dt = +\infty}$;
    \item \label{t3'} $\displaystyle{ \int_{t_0}^{+\infty} \varepsilon (t) \Gamma(t)\left(    \|{x^{\star}} \|^2 -  \|x_{\varepsilon (t)} \|^2  \right)dt  <+\infty}. $
\end{enumerate}

\noindent Then $\slim_{t\rightarrow +\infty} {X(t)}={x^{\star}}$ a.s., and $ {V(t)}=o\left(\frac{1}{\Gamma(t)}\right)$ a.s..
\end{theorem}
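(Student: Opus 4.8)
The plan is to mirror the time scaling and averaging argument used to prove Theorem~\ref{trajectory}, but to drive it with the first-order Tikhonov result Theorem~\ref{converge20} in place of Theorem~\ref{converge}. Concretely, I set $\theta(t)\eqdef\int_{t_0}^t\Gamma(u)du$, $\tilde\sigma(s,\cdot)\eqdef\sigma(\theta^{-1}(s),\cdot)\sqrt{\Gamma(\theta^{-1}(s))}$, and $\tilde\varepsilon(s)\eqdef\varepsilon(\theta^{-1}(s))$, and consider the first-order regularized dynamic \eqref{SDITA} written in the time variable $s\in\R_+$,
\begin{equation*}
dZ(s)\in-\partial F(Z(s))ds-\tilde\varepsilon(s)Z(s)ds+\tilde\sigma(s,Z(s))dW(s),\qquad Z(0)=X_0+\Gamma(t_0)V_0 .
\end{equation*}
Applying the change of time $s=\theta(t)$ as in \eqref{scaling} (so that $\tau'=\Gamma$) followed by the averaging \eqref{eq:XYVrel}, the term $-\tilde\varepsilon(\theta(t))Y(t)$ rescales, after division by $\Gamma$, to exactly $-\varepsilon(t)(X(t)+\Gamma(t)V(t))$ because $\tilde\varepsilon(\theta(t))=\varepsilon(t)$ and $Y=X+\Gamma V$; this reconstructs \eqref{ISIHDN-S-TA} with $\beta\equiv\Gamma$. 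The first thing I would verify is that the hypotheses transfer correctly: $\Gamma\sigma_\infty\in\Lp^2([t_0,+\infty[)$ is equivalent to $\tilde\sigma_\infty\in\Lp^2(\R_+)$ (as already exploited in Theorem~\ref{trajectory}), and under $s=\theta(t)$, $ds=\Gamma(t)dt$, together with $\tilde\varepsilon(\theta(t))=\varepsilon(t)$ and the invariance of the regularized minimizer $x_{\tilde\varepsilon(\theta(t))}=x_{\varepsilon(t)}$, the weighted conditions \ref{t1'}--\ref{t3'} become precisely conditions \ref{t1}--\ref{t3} of Theorem~\ref{converge20} for $\tilde\varepsilon$.

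Once the hypotheses are in place, I would invoke Theorem~\ref{converge20}: the rescaled process $Z$ is the unique element of $S_\H^\nu$, satisfies $\sup_{s\geq 0}\norm{Z(s)}<+\infty$ a.s., and $\slim_{s\to+\infty}Z(s)=x^\star$ a.s., where crucially $x^\star=\proj_{\calS_F}(0)$ is deterministic (as opposed to the random limit in Theorem~\ref{trajectory}). Existence, uniqueness and membership of $(X,V)$ in $S_{\H\times\H}^\nu[t_0]$ then follow verbatim from the corresponding estimates in the proof of Theorem~\ref{trajectory}, since time scaling and averaging are invertible and the moment bounds on $Y$, $X$, and $V$ are obtained identically. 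Setting $Y(t)=Z(\theta(t))$ yields $\slim_{t\to+\infty}Y(t)=x^\star$ a.s.

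The remaining task is to transport this strong convergence through the averaging relation \eqref{aver}, $X(t)=\int_{t_0}^t Y(u)d\mu_t(u)+\xi(t)$. Since $\mu_t$ from \eqref{medida} is a probability measure and the norm is convex, Jensen's inequality gives
\begin{equation*}
\norm{X(t)-x^\star}\leq\norm{\xi(t)}+\int_{t_0}^t\norm{Y(u)-x^\star}d\mu_t(u).
\end{equation*}
With $a=1/\Gamma$ and $A(t)=\int_{t_0}^t du/\Gamma(u)$, the term $\norm{\xi(t)}\to 0$ a.s. by Lemma~\ref{1gam} and $V_0\in\Lp^\nu(\Omega;\H)$, while applying Lemma~\ref{ab} to the scalar $b(u)\eqdef\norm{Y(u)-x^\star}\to 0$ a.s. (using $a\notin\Lp^1$) forces the averaged integral to vanish; hence $\slim_{t\to+\infty}X(t)=x^\star$ a.s. Finally, from $Y(t)=X(t)+\Gamma(t)V(t)$ and the fact that $X$ and $Y$ share the a.s. limit $x^\star$, I obtain $\Gamma(t)V(t)=Y(t)-X(t)\to 0$, that is $V(t)=o(1/\Gamma(t))$ a.s.

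I expect the main obstacle to be twofold. First, the change-of-variable bookkeeping that converts the $\Gamma$-weighted Tikhonov conditions \ref{t2'}--\ref{t3'} into the unweighted conditions \ref{t2}--\ref{t3} must be carried out carefully, relying on $ds=\Gamma(t)dt$ and on the fact that the regularized minimizer $x_{\tilde\varepsilon(\theta(t))}=x_{\varepsilon(t)}$ depends only on the numerical value of the parameter. Second, and more substantively, I must ensure that strong --- rather than merely weak --- convergence survives the averaging; this is the point where the argument genuinely departs from Theorem~\ref{trajectory}, but it turns out tractable precisely because convexity of the norm lets me invoke Jensen and reduce everything to the scalar statement $\norm{Y(u)-x^\star}\to 0$, to which Lemma~\ref{ab} applies directly.
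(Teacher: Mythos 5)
Your proposal is correct and follows essentially the same route as the paper: time scaling with $\tau'=\Gamma$ and averaging driven by Theorem~\ref{converge20} (instead of Theorem~\ref{converge}), transfer of \ref{t1'}--\ref{t3'} to \ref{t1}--\ref{t3} via $ds=\Gamma(t)dt$ and $x_{\tilde\varepsilon(\theta(t))}=x_{\varepsilon(t)}$, then the decomposition $\norm{X(t)-x^\star}\leq\norm{\xi(t)}+\int_{t_0}^t\norm{Y(u)-x^\star}d\mu_t(u)$ handled by Lemmas~\ref{1gam} and~\ref{ab}, and finally $\Gamma(t)V(t)=Y(t)-X(t)\to 0$. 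Your write-up is in fact slightly more explicit than the paper's on two points it leaves implicit --- how the Tikhonov term $-\tilde\varepsilon(s)Z(s)$ rescales into $-\varepsilon(t)(X(t)+\Gamma(t)V(t))$ after division by $\Gamma$, and the bookkeeping behind the equivalence of the weighted and unweighted Tikhonov conditions --- so no gaps remain.
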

\proof{}
Let $s_0>0$, $\theta(t)\eqdef s_0+\int_{t_0}^t \Gamma(u)du$; $\tilde{\varepsilon}(t)=\varepsilon( \theta^{-1}(t))
$; and $\tilde{\sigma}(s,\cdot)\eqdef\sigma(\theta^{-1}(s),\cdot)\sqrt{\Gamma(\theta^{-1}(s))}$. Then $\varepsilon$ satisfying~\ref{t1'},\ref{t2'}, and~\ref{t3'} is equivalent to $\tilde{\varepsilon}$ satisfying~\ref{t1},\ref{t2}, and~\ref{t3} defined in Theorem \ref{converge20}. Besides, $\Gamma\sigma_{\infty}\in \Lp^2([t_0,+\infty[)$ is equivalent to $\tilde{\sigma}_{\infty}\in\Lp^2(\R_+)$.
Consider the dynamic: \smallskip

\begin{equation}
\begin{cases}\label{inv1}
d{Z(s)}&\in -\partial F({Z(s)})-\tilde{\varepsilon}(s){Z(s)}+\tilde{\sigma}(s,{Z(s)})dW(s),\quad s>s_0,\\
{Z(s_0)}&={X_0}+\Gamma(t_0){V_0}.
\end{cases}    
\end{equation}
By Theorem~\ref{converge20}, we have that there exists a unique solution ${Z}\in S_{\H}^{\nu}[s_0]$, and that $\lim_{s\rightarrow\infty} {Z(s)} = {x^{\star}}$ a.s. (Recall that ${x^{\star}}\eqdef \proj_{S_F}(0)$). Using the time scaling $\tau\equiv \theta$ and the averaging described at the beginning of this section, we end up with the dynamic \eqref{ISIHDN-S-TA} in the case where $\beta\equiv\Gamma$. The existence and uniqueness of solution, and the fact that $({X,V})\in S_{\H\times\H}^{\nu}[t_0]$ goes analogously as in  the proof of Theorem~\ref{trajectory}.\smallskip

 Now we prove the claim, since $\lim_{s\rightarrow\infty} {Z(s)} = {x^{\star}}$ a.s., this implies directly that $\lim_{t\rightarrow\infty} {Y(t)} = {x^{\star}}$ a.s.. Besides, we have the relation \eqref{aver}, i.e. $$ {X(t)}=\int_{t_0}^t {Y(u)}d\mu_t(u)+\xi(t),$$
where $\mu_t$ and $\xi$ are defined in \eqref{medida} and \eqref{chi}, respectively. 
Consequently, we have \begin{align*}
    \Vert  {X(t)}-{x^{\star}}\Vert&\leq \bignorm{ {X(t)}-\int_{t_0}^t {Y(u)}d\mu_t(u)}+\bignorm{\int_{t_0}^t {Y(u)}d\mu_t(u)-{x^{\star}}}\\
    &\leq \Vert\xi(t)\Vert+\bignorm{\int_{t_0}^t {Y(u)}d\mu_t(u)-{x^{\star}}}.
\end{align*}

\noindent Let $a(t)=\frac{1}{\Gamma(t)}$ and $A(t)=\int_{t_0}^t\frac{du}{\Gamma(u)}$. By Lemma~\ref{1gam}, we have that $\lim_{t\rightarrow\infty}\Vert\xi(t)\Vert=0$. On the other hand \begin{align*}
    \bignorm{\int_{t_0}^t {Y(u)}d\mu_t(u)-{x^{\star}}}&=\bignorm{\int_{t_0}^t ({Y(u)}-{x^{\star}})d\mu_t(u)}\\
    &\leq\int_{t_0}^t \Vert {Y(u)}-{x^{\star}}\Vert d\mu_t(u)\\
    &=e^{-A(t)}\Vert {Y(t_0)}-{x^{\star}}\Vert+e^{-A(t)}\int_{t_0}^t a(u)e^{A(u)}\Vert {Y(u)}-{x^{\star}}\Vert du.
\end{align*}
Let $b(u)=\Vert {Y(u)}-{x^{\star}}\Vert$. Since we already proved that $\lim_{u\rightarrow\infty}b(u)=0$ a.s., and we have that $a\notin \Lp^1([t_0,+\infty[)$ by Lemma~\ref{1gam}, we utilize Lemma~\ref{ab} with our respective $a,b$ functions. This let us conclude that $$ \lim_{t\rightarrow\infty}\bignorm{\int_{t_0}^t {Y(u)}d\mu_t(u)-{x^{\star}}}=0 \quad a.s..$$
Thus, $\lim_{t\rightarrow\infty} {X(t)} = {x^{\star}}$ a.s.. Finally, since $${Y(t)}={X(t)}+\Gamma(t){V(t)},$$
and the fact that ${X}$ and ${Y}$ have (a.s.) the same limit, we conclude that $$\lim_{t\rightarrow\infty}\Gamma(t){V(t)}=0 \quad a.s..$$

\endproof
\subsection{Concrete cases}
In order to give some conditions when~\ref{t1'},~\ref{t2'}, and~\ref{t3'} of Theorem~\ref{trajectory1} are satisfied we need to introduce the following definition:
\begin{definition}[H\"olderian error bound]
Let $f:\H\rightarrow\R$ be a proper function such that $\calS \neq \emptyset$. $f$ satisfies a H\"olderian (or power-type) error bound inequality on $\calS$ with exponent $p \geq 1$, if there exists $\kappa>0$ and $r>\min f$ such that
\begin{equation}\label{eq:errbnd}
f(x)-\min f \geq \kappa\dist(x,\calS)^p, \quad \forall x \in [\min f \leq f \leq r] ,
\end{equation}
and we will write $f \in \EB^p(\calS)$,
\end{definition}
\begin{remark}
    Let $f:\H\rightarrow\R$ be a differentiable function such that $\calS\neq\emptyset$. If $f$ satisfies the Polyak-{\L}ojasiewicz inequality on $\calS$, then $f$ satisfies a H\"olderian error bound inequality with exponent $p=2$.
\end{remark}

\begin{theorem}\label{practical}
Consider the setting of Theorem~\ref{trajectory1} and suppose that  $F=f+g \in \EB^p(\calS_F)$. Let $s_0>0$ and denote $\theta(t)\eqdef s_0+\int_{t_0}^t\Gamma(s)ds$, then taking the Tikhonov parameter
 $\varepsilon(t) = \frac{1}{\theta^r(t)}$ with
$$
1 \geq r > \frac{2p}{2p+1},
$$
then the three conditions~\ref{t1'},~\ref{t2'}, and~\ref{t3'} of Theorem~\ref{trajectory1}   are satisfied simultaneously.
In particular, for any solution $({X,V})\in S_{\H\times \H}^{\nu}[t_0]$ of \eqref{trajectory1}, we get almost sure (strong) convergence of ${X(t)}$ to the minimal norm solution named ${x^{\star}}=P_{\calS_F}(0)$ and that ${V(t)}=o\left(\frac{1}{\Gamma(t)}\right)$.
\end{theorem}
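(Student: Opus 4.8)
The plan is to reduce everything to the three Tikhonov conditions $(T_1')$, $(T_2')$, $(T_3')$ of Theorem~\ref{trajectory1}: once these are verified for $\varepsilon(t)=\theta(t)^{-r}$, the conclusion (a.s. strong convergence of $X(t)$ to $x^\star=\proj_{\calS_F}(0)$ and $V(t)=o(1/\Gamma(t))$) is immediate. Since $\theta'=\Gamma$ and $\theta(t)\to+\infty$ (because $\Gamma\notin\Lp^1([t_0,+\infty[)$), the device I would use throughout is the change of variable $u=\theta(t)$, $du=\Gamma(t)\,dt$, which turns any integral $\int_{t_0}^{\infty}\varepsilon(t)^q\Gamma(t)\,dt$ into $\int_{s_0}^{\infty}u^{-rq}\,du$. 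First I would dispose of $(T_1')$: $\varepsilon(t)=\theta(t)^{-r}\to 0$ since $r>0$ and $\theta(t)\to+\infty$. Next $(T_2')$: with $q=1$ the substitution gives $\int_{s_0}^{\infty}u^{-r}\,du$, which diverges precisely because $r\le 1$; this is where the upper constraint $r\le 1$ is used.

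The heart of the proof is $(T_3')$, i.e. the integrability of $t\mapsto\varepsilon(t)\Gamma(t)\big(\|x^\star\|^2-\|x_{\varepsilon(t)}\|^2\big)$, and this is where $F\in\EB^p(\calS_F)$ enters. The key ingredient I would establish is a quantitative decay of the Tikhonov gap $\Delta_\varepsilon := \|x^\star\|^2-\|x_\varepsilon\|^2$ as $\varepsilon\downarrow 0$. Using that $x_\varepsilon$ minimizes the $\varepsilon$-strongly convex $F_\varepsilon$ and comparing with $x^\star$ yields the single inequality $\tfrac{\varepsilon}{2}\|x_\varepsilon-x^\star\|^2+\big(F(x_\varepsilon)-\min F\big)\le \tfrac{\varepsilon}{2}\Delta_\varepsilon$; dropping either nonnegative term on the left gives at once $\|x_\varepsilon\|\le\|x^\star\|$ (hence $\Delta_\varepsilon\ge 0$ and $\|x^\star+x_\varepsilon\|\le 2\|x^\star\|$), the value estimate $F(x_\varepsilon)-\min F = O(\varepsilon)$, and the crucial localization $\|x_\varepsilon-x^\star\|^2\le \Delta_\varepsilon$. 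Feeding the value estimate into $\EB^p$ controls the distance to the solution set, $\dist(x_\varepsilon,\calS_F)=O(\varepsilon^{1/p})$, and combining this with $\Delta_\varepsilon=\langle x^\star-x_\varepsilon,x^\star+x_\varepsilon\rangle\le 2\|x^\star\|\,\|x_\varepsilon-x^\star\|$ and the localization $\|x_\varepsilon-x^\star\|\le\sqrt{\Delta_\varepsilon}$, one extracts a polynomial bound $\Delta_\varepsilon=O(\varepsilon^{1/(2p)})$ (a clean, possibly loose, rate, consistent with the sharper $\Delta_\varepsilon=O(\varepsilon^{1/p})$).

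Given this estimate, after the substitution $u=\theta(t)$ condition $(T_3')$ reads $\int_{t_0}^{\infty}\theta(t)^{-r}\Gamma(t)\,O\big(\theta(t)^{-r/(2p)}\big)\,dt = O\big(\int_{s_0}^{\infty}u^{-r(1+\frac{1}{2p})}\,du\big)$, which converges exactly when $r\big(1+\tfrac{1}{2p}\big)>1$, i.e. $r>\tfrac{2p}{2p+1}$. Hence the stated window $\tfrac{2p}{2p+1}<r\le 1$ makes $(T_1')$–$(T_3')$ hold simultaneously, and Theorem~\ref{trajectory1} delivers both conclusions. I expect the main obstacle to be precisely this quantitative Tikhonov-path estimate on $\Delta_\varepsilon$: the error bound only controls the distance to the \emph{set} $\calS_F$, whereas the gap $\Delta_\varepsilon$ is tied to the \emph{specific} minimal-norm point $x^\star=\proj_{\calS_F}(0)$, so the delicate step is to route between the two, via the strong-convexity localization $\|x_\varepsilon-x^\star\|^2\le\Delta_\varepsilon$ and the minimal-norm inequality $\|\proj_{\calS_F}(x_\varepsilon)\|\ge\|x^\star\|$, and to pin down the correct power of $\varepsilon$. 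Finally, since $\PL_\mu(\calS)$ implies $\EB^2(\calS_F)$, the case $p=2$ (giving $r>\tfrac45$) covers in particular the strongly convex and Polyak-{\L}ojasiewicz settings.
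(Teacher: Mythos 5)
Your proposal is correct in substance, but it takes a genuinely different, more self-contained route than the paper. The paper's proof is a two-line reduction: it performs the time rescaling $s=\theta(t)$ exactly as in Theorem~\ref{trajectory1}, observes that the Tikhonov parameter of the rescaled first-order dynamic \eqref{inv1} becomes $\tilde{\varepsilon}(s)=\varepsilon(\theta^{-1}(s))=s^{-r}$, and then invokes \cite[Theorem 4.8]{mio2}, where the verification of \ref{t1}--\ref{t3} for the parameter $s^{-r}$ under $\EB^p$ has already been carried out. You instead verify \ref{t1'}--\ref{t3'} directly in the original time variable via the substitution $u=\theta(t)$, $du=\Gamma(t)\,dt$ --- legitimate precisely because \ref{t1'}--\ref{t3'} are by construction the rescaled versions of \ref{t1}--\ref{t3} --- and you re-derive from scratch the quantitative Tikhonov-gap estimate that the paper's citation hides. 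The paper's route buys brevity and consistency with the transfer philosophy of the whole article; yours buys a proof readable without opening \cite{mio2}, and it makes explicit where each hypothesis ($r\leq 1$ for \ref{t2'}, $r>\frac{2p}{2p+1}$ and $\EB^p$ for \ref{t3'}, $\Gamma\notin\Lp^1([t_0,+\infty[)$ for $\theta(t)\to+\infty$) actually enters.

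One step of your gap estimate should be tightened. As literally written, combining $\Delta_\varepsilon\leq 2\|x^\star\|\,\|x_\varepsilon-x^\star\|$ with $\|x_\varepsilon-x^\star\|\leq\sqrt{\Delta_\varepsilon}$ is circular (it only yields $\Delta_\varepsilon\leq 4\|x^\star\|^2$), and the norm inequality $\|\proj_{\calS_F}(x_\varepsilon)\|\geq\|x^\star\|$ alone does not convert $d_\varepsilon\eqdef\dist(x_\varepsilon,\calS_F)=\mathcal{O}(\varepsilon^{1/p})$ into a bound on $\|x_\varepsilon-x^\star\|$. The missing (standard) ingredient is the projection variational inequality: setting $z_\varepsilon\eqdef\proj_{\calS_F}(x_\varepsilon)$, since $x^\star=\proj_{\calS_F}(0)$ and $z_\varepsilon\in\calS_F$ one has $\langle x^\star,z_\varepsilon-x^\star\rangle\geq 0$, hence $\|z_\varepsilon-x^\star\|^2\leq\|z_\varepsilon\|^2-\|x^\star\|^2\leq\bigl(\|x^\star\|+d_\varepsilon\bigr)^2-\|x^\star\|^2=\mathcal{O}(d_\varepsilon)$, where we used $\|z_\varepsilon\|\leq\|x_\varepsilon\|+d_\varepsilon\leq\|x^\star\|+d_\varepsilon$; the triangle inequality then gives $\|x_\varepsilon-x^\star\|=\mathcal{O}(\varepsilon^{1/(2p)})$, and your $\Delta_\varepsilon=\mathcal{O}(\varepsilon^{1/(2p)})$ follows, matching the threshold $r>\frac{2p}{2p+1}$ exactly. (The value estimate $F(x_\varepsilon)-\min F\leq\frac{\varepsilon}{2}\|x^\star\|^2$ also guarantees that $x_\varepsilon$ eventually lies in the sublevel set where $\EB^p$ holds, so the local error bound does apply.) In fact you can bypass $x^\star$ altogether: $\Delta_\varepsilon\leq\|z_\varepsilon\|^2-\|x_\varepsilon\|^2\leq\bigl(\|z_\varepsilon\|+\|x_\varepsilon\|\bigr)d_\varepsilon=\mathcal{O}(\varepsilon^{1/p})$, the sharper rate you mention in passing, which shows the conclusion even holds on the larger window $r>\frac{p}{p+1}$. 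Neither repair affects your verification of \ref{t1'} and \ref{t2'}, which is exactly right.
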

\proof{}
    We proceed as in the proof of Theorem~\ref{trajectory1} and arrive to the dynamic \eqref{inv1}, since $\tilde{\varepsilon}(t)=\varepsilon(\theta^{-1}(t))=\frac{1}{t^r}$, the proof goes as in \cite[Theorem 4.8]{mio2}.
\endproof

\begin{theorem}\label{importante1}
    Let $\nu\geq 2$, $f\in \Gamma_0(\H)\cap C_L^2(\H)$ such that $\calS$ is nonempty, and also $f\in \EB^p(\calS)$, $\sigma$ satisfying \eqref{H}, and $\Gamma\sigma_{\infty}\in\Lp^2([t_0,+\infty[)$ and is non-increasing. Let us consider $\varepsilon(t)=\frac{1}{t^r}$ where $0<r<1$, then we evaluate \eqref{ISIHDN-S-TA} in the case where $\gamma$ satisfies \eqref{H1}, $g\equiv 0$, $\beta\equiv\Gamma$,  and with initial data ${X_0},{V_0}\in\Lp^{\nu}(\Omega;\H)$, i.e., for $t>t_0$,

\begin{equation} \label{g0} 
\begin{cases}
d{X(t)}&={V(t)}dt,  \\
d{V(t)}&=-\gamma(t){V(t)}dt-\nabla f({X(t)}+\Gamma(t){V(t)})dt-\varepsilon(t)({X(t)}+\Gamma(t){V(t)})\\
&\phantom{=}+\sigma(t,{X(t)}+\Gamma(t){V(t)})dW(t),\\
{X(t_0)}&={X_0}, \quad {V(t_0)}={V_0}. 
\end{cases}
\end{equation}
For $\varepsilon>0$, let us define $f_{\varepsilon}(x)\eqdef f(x)+\frac{\varepsilon}{2}\Vert x\Vert^2$, and let $x_{\varepsilon}$ be the unique minimizer of $f_{\varepsilon}$. Moreover, let $s_0>0$ and for $s_1>s_0$ consider, \begin{equation}\label{eqdefr}
    R(s)\eqdef e^{-\frac{s^{1-r}}{1-r}}\int_{s_1}^s e^{\frac{u^{1-r}}{1-r}}\sigma_{\infty}^2(\theta^{-1}(u))\Gamma(\theta^{-1}(u)) du,
\end{equation}
where $\theta(t)\eqdef s_0+\int_{t_0}^t \Gamma(u)du$.
\smallskip
Let also ${x^{\star}}\eqdef \proj_{\calS}(0)$, $A(s)\eqdef \int_{s_1}^s\frac{du}{\Gamma(u)}$, and $t_1\eqdef\theta^{-1}(s_1)$. Then, the solution trajectory $({X,V})\in S_{\H\times\H}^{\nu}[t_0]$ is unique, and we have that:  	
\begin{enumerate}[label=(\roman*)]
    \item $R(\theta(t))\rightarrow 0 \mbox{ as } \; t \to +\infty.$ \label{error}\\
    \item Let $\bar{\sigma}(t)=\Gamma(t)\sigma_{\infty}^2(t)$, then $$R(\theta(t))=\mathcal{O}\left(\mathrm{exp}(-\theta^r(t)(1-2^{-r}))+\theta^r(t)\bar{\sigma}\left(\frac{s_1+\theta(t)}{2}\right)\right).$$ Moreover, if $\bar{\sigma}(t)=\mathcal{O}(\theta^{-\Delta}(t))$ for $\Delta>1$, then $R(\theta(t))=\mathcal{O}(\theta^{r-\Delta}(t))$.
    \end{enumerate}\smallskip
    
    Besides, we have the following convergence rate in expectation:
    \begin{enumerate}[resume*]
        
	\item \label{n5} For the values, we have:
	$$\EE[f({X(t)})-\min (f)]= \mathcal O \left( \max\{e^{-A(t)},I[h_1](t)\}   \right),$$ where 
 $h_1(t)=\frac{1}{\theta^r(t)}+R(\theta(t))$. 
	
    \item  \label{n6} And for the trajectory, we obtain: $$\EE[\|{X(t)} -{x^{\star}}\|^2]=\mathcal{O}\left(\max\{e^{-A(t)},I[h_2](t)\}\right),$$  where 
    $h_2(t)=\theta^{r-1}(t)+\theta^{-\frac{r}{p}}(t)+\theta^r(t)R(\theta(t))$. 
    
	\end{enumerate}
    
\end{theorem}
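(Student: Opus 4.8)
The plan is to reduce everything to the first-order Tikhonov-regularized SDE via the time scaling $\tau\equiv\theta$, exactly as in the proofs of Theorem~\ref{trajectory1} and Theorem~\ref{maxai}, and then transfer back through the averaging \eqref{aver} combined with Jensen's inequality. Setting $\tilde{\sigma}(s,\cdot)\eqdef\sigma(\theta^{-1}(s),\cdot)\sqrt{\Gamma(\theta^{-1}(s))}$ and $\tilde{\varepsilon}(s)\eqdef\varepsilon(\theta^{-1}(s))=s^{-r}$, the system \eqref{g0} is the image under $\tau\equiv\theta$ and averaging of the first-order dynamic \eqref{inv1} with $g\equiv 0$; existence, uniqueness and the $S_{\H\times\H}^{\nu}[t_0]$ regularity of $(X,V)$ then follow verbatim from the argument in Theorem~\ref{trajectory1}. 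The key observation is that $R(s)$ defined in \eqref{eqdefr} is exactly the variation-of-constants noise residual $e^{-\int_{s_1}^s\tilde{\varepsilon}}\int_{s_1}^s e^{\int_{s_1}^u\tilde{\varepsilon}}\,\tilde{\sigma}_{\infty}^2(u)\,du$ attached to \eqref{inv1}, since $\tilde{\sigma}_{\infty}^2(s)=\sigma_{\infty}^2(\theta^{-1}(s))\Gamma(\theta^{-1}(s))$ and $\int_{s_1}^s\tilde{\varepsilon}(u)\,du=\frac{s^{1-r}}{1-r}+\mathrm{const}$. Consequently the quantitative Tikhonov estimates of \cite{mio2} for the first-order SDE give, in the $s$-variable, $\EE[f(Z(s))-\min f]=\mathcal{O}(s^{-r}+R(s))$ and $\EE[\|Z(s)-x^{\star}\|^2]=\mathcal{O}(s^{r-1}+s^{-r/p}+s^rR(s))$, the exponent $p$ entering through the error bound $f\in\EB^p(\calS)$ applied to the regularization path $x_{\tilde{\varepsilon}(s)}$.

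Claims~\ref{error} and (ii) concern $R$ alone. For \ref{error} I would write $R(\theta(t))$ as a weighted average of $\tilde{\sigma}_\infty^2$ against the (normalized) kernel $e^{\int\tilde{\varepsilon}}$ and invoke the averaging lemma (Lemma~\ref{ab}), using that $\int\tilde{\varepsilon}=+\infty$ while $\sigma_\infty^2\Gamma\to 0$ by $\Gamma\sigma_\infty\in\Lp^2$ and monotonicity; this is the l'H\^opital-type argument already used for $I[\tfrac{1}{\theta}]$ after Theorem~\ref{maxai}. For the explicit rate in (ii), I would split the defining integral at the midpoint, $\int_{s_1}^s=\int_{s_1}^{(s_1+s)/2}+\int_{(s_1+s)/2}^s$, bound the first piece by the exponential factor times a constant and the second by the monotonicity of $\bar{\sigma}$ (which is where the non-increasing assumption on $\Gamma\sigma_\infty$ is used), mirroring the splitting in the proof of Theorem~\ref{crpolyak}; this yields the $\exp(-\theta^r(t)(1-2^{-r}))+\theta^r(t)\bar{\sigma}((s_1+\theta(t))/2)$ bound, and substituting $\bar{\sigma}=\mathcal{O}(\theta^{-\Delta})$ with $\Delta>1$ collapses it to $\mathcal{O}(\theta^{r-\Delta}(t))$.

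For the value rate~\ref{n5} I would proceed exactly as in Theorem~\ref{maxai}: changing variables $s=\theta(t)$ gives $\EE[f(Y(t))-\min f]=\mathcal{O}(h_1(t))$ with $h_1=\theta^{-r}+R\circ\theta$, then \eqref{aver} splits $\EE[f(X(t))-\min f]$ into a displacement term $\EE[f(X(t))-f(\int_{t_0}^t Y(u)\,d\mu_t(u))]$, controlled by $\|\nabla f(X(t))\|\,\|\xi(t)\|=\mathcal{O}(e^{-A(t)})$ via convexity, Cauchy--Schwarz and the a.s. boundedness of $X$, and a Jensen term $\int_{t_0}^t\EE[f(Y(u))-\min f]\,d\mu_t(u)\lesssim e^{-A(t)}+I[h_1](t)$, assembling into $\mathcal{O}(\max\{e^{-A(t)},I[h_1](t)\})$. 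The trajectory rate~\ref{n6} is the genuinely new part and the main obstacle: here I would apply the same averaging to the convex map $x\mapsto\|x-x^{\star}\|^2$, using $\|X(t)-x^{\star}\|^2\le 2\|\int_{t_0}^t(Y(u)-x^{\star})\,d\mu_t(u)\|^2+2\|\xi(t)\|^2$ and Jensen on the first summand to obtain $\int_{t_0}^t\EE[\|Y(u)-x^{\star}\|^2]\,d\mu_t(u)+\mathcal{O}(e^{-2A(t)})$, then plug in $\EE[\|Y(u)-x^{\star}\|^2]=\mathcal{O}(h_2(u))$ with $h_2=\theta^{r-1}+\theta^{-r/p}+\theta^r(R\circ\theta)$. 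The delicate points are that $\|\cdot-x^{\star}\|^2$, rather than a value, is pushed through Jensen (legitimate by convexity of the norm squared), and that $h_2$ carries the integral object $R\circ\theta$, so one must confirm that $I[\cdot]$ applied to sums and products involving $R$ still decays; since all three summands of $h_2$ tend to zero and $I[\cdot]$ preserves this limit via Lemma~\ref{ab}, the bound $\mathcal{O}(\max\{e^{-A(t)},I[h_2](t)\})$ follows. I expect the bookkeeping of $R$ through the change of variables $s=\theta(t)$ and then through $I[\cdot]$, together with the monotonicity-based splitting in (ii), to be where most of the care is required.
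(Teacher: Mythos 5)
Your proposal is correct and follows essentially the same route as the paper: reduce \eqref{g0} to the first-order Tikhonov dynamic \eqref{inv1} via the time scaling $\tau\equiv\theta$, import the $s$-variable estimates $\EE[f(Z(s))-\min f]=\mathcal{O}(s^{-r}+R(s))$ and $\EE[\|Z(s)-x^{\star}\|^2]=\mathcal{O}(s^{r-1}+s^{-r/p}+s^{r}R(s))$ from \cite[Theorem~4.11]{mio2}, and transfer them back through the averaging/Jensen mechanism of Theorem~\ref{maxai}. The only difference is one of bookkeeping: the paper simply cites \cite[Theorem~4.11]{mio2} for the decay and asymptotics of $R$ (items (i)--(ii)) and says ``proceeding as in the proof of Theorem~\ref{maxai}'' for items (iii)--(iv), whereas you re-derive these ingredients explicitly (the midpoint splitting for $R$, and Jensen applied to the convex map $x\mapsto\|x-x^{\star}\|^2$ for the trajectory rate) --- the same arguments, spelled out rather than delegated.
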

\proof{}
    We proceed as in the proof of Theorem~\ref{trajectory} and define analogously $\tilde{\sigma}$, $\tilde{\varepsilon}$, we also consider the dynamic \eqref{inv}. By \cite[Theorem 4.11]{mio2} we obtain that $$R(s)=e^{-\frac{s^{1-r}}{1-r}}\int_{s_1}^s e^{\frac{u^{1-r}}{1-r}}\tilde{\sigma}_{\infty}^2(u) du,$$
    where $\tilde{\sigma}_{\infty}^2\in\Lp^2([s_0,+\infty[)$, satisfies the following:
    \begin{itemize}
        \item $R(s)\rightarrow+\infty$ as $t\rightarrow+\infty$.
        \item $R(s)=\mathcal{O}\left(\mathrm{exp}(-s^r(1-2^{-r}))+s^r\tilde{\sigma}_{\infty}^2\left(\frac{s_1+s}{2}\right)\right)$. Moreover if $\tilde{\sigma}_{\infty}^2(s)=\mathcal{O}(s^{-\Delta})$ for $\Delta>1$, then $R(s)=\mathcal{O}(s^{r-\Delta})$. 
    \end{itemize}
    And evaluating at $s=\theta(t)$ we obtain the first two items of the theorem. For the third and fourth items we used that \begin{itemize}
        \item $\EE[f({Z(s)})-\min (f)]= \mathcal O \left( \displaystyle\frac{1}{s^{r} }+R(s)   \right)$.
        \item $\EE[\|{Z(s)} -{x^{\star}}\|^2]=\mathcal{O}\left(\dfrac{1}{ s^{1-r}}+\dfrac{1}{ s^{\frac{r}{p}}}+s^{r}R(s)\right) .$ 
    \end{itemize}
    Then, proceeding as in the proof of Theorem~\ref{maxai}, we obtain the desired results.
\endproof
\begin{corollary}
    Consider Theorem~\ref{practical} in the case where $\gamma(t)=\frac{\alpha}{t}$ for $\alpha>1$, $\beta(t)= \frac{t}{\alpha-1}$ then we have that:
    \begin{enumerate}
        \item If $\sigma_{\infty}^2(t)=\mathcal{O}(t^{-2(\Delta+1)})$ for $\Delta>1$, and $\alpha\neq \{1+2r,1+2(\Delta-r)\}$ , then
	$$\EE[f({X(t)})-\min (f)]= \mathcal O \left( \max\{t^{-(\alpha-1)},t^{-2r},t^{-2(\Delta-r)}\}   \right).$$
 In particular, if $\alpha>3$
        \item If $\sigma_{\infty}^2(t)=\mathcal{O}(t^{-2(\Delta+1)})$ for $\Delta>\max\{1,2r\}$, and $\alpha\neq \{3-2r,1+\frac{2r}{p},1+2(2r-\Delta)\}$, then $$\EE[\|{X(t)} -{x^{\star}}\|^2]=\mathcal{O}\left(\max\{t^{-(\alpha-1)},t^{-2(1-r)},t^{-\frac{2r}{p}},t^{-2(2r-\Delta)}\}\right).$$
     \end{enumerate}
\end{corollary}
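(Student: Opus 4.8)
The plan is to specialize Theorem~\ref{importante1} to $\gamma(t)=\frac{\alpha}{t}$ and to reduce the whole statement to an elementary power-law estimate for the averaging operator $I[\cdot]$. First I would record the explicit ingredients attached to $\gamma(t)=\alpha/t$ with $\alpha>1$: one has $p(t)=(t/t_0)^{\alpha}$, hence $\Gamma(t)=\frac{t}{\alpha-1}$ (so that indeed $\beta\equiv\Gamma$ with $\beta(t)=\frac{t}{\alpha-1}$), $\theta(t)=s_0+\frac{t^2-t_0^2}{2(\alpha-1)}$ which grows quadratically, and $A(t)=\int_{t_0}^t\frac{du}{\Gamma(u)}=(\alpha-1)\ln(t/t_0)$, whence $e^{-A(t)}=(t_0/t)^{\alpha-1}=\calO(t^{1-\alpha})$. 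Substituting $e^{A(u)}/\Gamma(u)=(\alpha-1)t_0^{1-\alpha}u^{\alpha-2}$ into the definition of $I$ gives the closed form
\[
I[h](t)=(\alpha-1)\,t^{1-\alpha}\int_{t_0}^t u^{\alpha-2}h(u)\,du .
\]

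The main computational device I would isolate is the action of $I$ on a pure power: for $\beta>0$ with $\alpha\neq 1+\beta$,
\[
I[t^{-\beta}](t)=(\alpha-1)t^{1-\alpha}\int_{t_0}^t u^{\alpha-2-\beta}\,du=\calO\big(t^{-\min\{\beta,\alpha-1\}}\big),
\]
which follows by splitting into the two regimes $\alpha>1+\beta$ (the integral grows like $t^{\alpha-1-\beta}$, giving $\calO(t^{-\beta})$) and $\alpha<1+\beta$ (the integral converges, giving $\calO(t^{1-\alpha})$); at the resonance $\alpha=1+\beta$ a logarithmic factor appears, which is exactly why the corresponding values of $\alpha$ must be excluded. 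Since $I$ is linear and order-preserving, the rate of $I[h]$ for a finite sum of powers is the slowest (smallest-exponent) of the individual rates, and the final bound is obtained by taking the maximum with $e^{-A(t)}=\calO(t^{1-\alpha})$.

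Next I would convert the noise hypothesis. With $\Gamma(t)=\frac{t}{\alpha-1}$ and $\sigma_{\infty}^2(t)=\calO(t^{-2(\Delta+1)})$, one has $\bar\sigma(t)=\Gamma(t)\sigma_{\infty}^2(t)=\calO(t^{-(2\Delta+1)})=\calO(\theta^{-\Delta}(t))$, so Theorem~\ref{importante1} gives $R(\theta(t))=\calO(\theta^{r-\Delta}(t))=\calO(t^{-2(\Delta-r)})$; one also checks $\Gamma\sigma_{\infty}\in\Lp^2([t_0,+\infty[)$ since $\Delta>1$, so all hypotheses of Theorem~\ref{importante1} hold. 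For the values, feeding $h_1(t)=\theta^{-r}(t)+R(\theta(t))=\calO(t^{-2r}+t^{-2(\Delta-r)})$ into the closed form and applying the power estimate term by term yields
\[
\EE[f(X(t))-\min f]=\calO\big(\max\{t^{-(\alpha-1)},\,t^{-2r},\,t^{-2(\Delta-r)}\}\big),
\]
valid once $\alpha\notin\{1+2r,\,1+2(\Delta-r)\}$. For the trajectory I would repeat verbatim with $h_2(t)=\theta^{r-1}(t)+\theta^{-r/p}(t)+\theta^{r}(t)R(\theta(t))$; here $\theta^{r-1}(t)=\calO(t^{-2(1-r)})$, $\theta^{-r/p}(t)=\calO(t^{-2r/p})$, and $\theta^{r}(t)R(\theta(t))=\calO(t^{-2(\Delta-2r)})$, the last decaying precisely because $\Delta>\max\{1,2r\}$. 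Applying $I$ to each power and taking the maximum with $e^{-A(t)}$ then produces the stated bound on $\EE[\|X(t)-x^{\star}\|^2]$, with the resonant values of $\alpha$ excluded at each of the three exponents.

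The integral estimate itself is routine, so the genuine work is bookkeeping rather than a new inequality. The main (mild) obstacle is to keep track of which power dominates in each $h_j$ as the parameters $\alpha,\Delta,r,p$ vary, and to correctly single out the resonant values $\alpha=1+\beta$ at which $I[t^{-\beta}]$ acquires a $\log t$ factor; I would also verify at the outset that this instance satisfies every hypothesis of Theorem~\ref{importante1}, namely $f\in\Gamma_0(\H)\cap C_L^2(\H)$ with $\calS\neq\emptyset$ and $f\in\EB^p(\calS)$, $\sigma$ obeying \eqref{H} with $\Gamma\sigma_{\infty}\in\Lp^2$ non-increasing, $\gamma$ of the form $\alpha/t$ satisfying \eqref{H1}, and $0<r<1$.
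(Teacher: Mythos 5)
Your proposal is correct and follows essentially the same route the paper intends for this corollary (which it states without proof): specialize Theorem~\ref{importante1} to $\gamma(t)=\alpha/t$ (so $\Gamma(t)=\tfrac{t}{\alpha-1}$, $\theta(t)\sim \tfrac{t^2}{2(\alpha-1)}$, $e^{-A(t)}=\mathcal{O}(t^{1-\alpha})$), write $I[h](t)=(\alpha-1)t^{1-\alpha}\int_{t_0}^t u^{\alpha-2}h(u)\,du$ in closed form, and apply the power-law estimate $I[t^{-\beta}](t)=\mathcal{O}\left(t^{-\min\{\beta,\alpha-1\}}\right)$ (with a logarithmic loss exactly at the excluded resonances $\alpha=1+\beta$) term by term to $h_1$ and $h_2$. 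Note that your bookkeeping yields the exponent $t^{-2(\Delta-2r)}$ and the exclusion $\alpha\neq 1+2(\Delta-2r)$ in the trajectory bound, which is the correct version: the paper's printed $t^{-2(2r-\Delta)}$ and $\alpha\neq 1+2(2r-\Delta)$ are sign typos, since under $\Delta>2r$ the printed power would grow rather than decay.
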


\section{Conclusion}\label{sec:conclusion}

This work uncovers the global and local convergence properties of trajectories of the Inertial System with Implicit Hessian-driven Damping under stochastic errors both in the smooth and non-smooth setting. The aim is to solve convex optimization problems with noisy gradient input with vanishing variance. We have shed light on these properties and provided a comprehensive local and global complexity analysis both in the case where the Hessian damping parameter $\beta$ was dependent on the geometric damping $\gamma$ and when it was zero. We believe that this work, along with the technique of time scaling and averaging, paves the way for important extensions and research avenues. Among them, we mention extension to the situation where the drift term is a non-potential co-coercive operator.







\appendix
\tcm{
\section{SDEs are better approximations to stochastic inertial algorithms}\label{sec:sdeode}
In this section, we will argue that \eqref{ISIHD-S} is an accurate continuous-time model of a natural stochastic inertial algorithm with an accuracy error that scales as $\mathcal{O}(h)$, and this is much better than \eqref{ISIHD} whose accuracy is only $\mathcal{O}(\sqrt{h})$. This generalizes \cite[Proposition~2.1]{sdemodel} and holds for a more general model of the diffusion coefficient and under weaker regularity assumptions. The proof is reminiscent of strong consistency bounds of Euler-type methods for nonlinear SDEs \cite{Kloeden92}, that we will refine by exploiting the structure of our SDE \eqref{ISIHD-S}. We also restrict our discussion to the finite dimensional case where $\H=\K=\R^d$.

Given a step-size $h>0$, the Euler-Maruyama discretization applied to \eqref{ISIHD-S} computes approximations $X_k$ and $V_k$  of $X(t_k)$ and $V(t_k)$, where $t_k = t_0+k h$, $k \in \N$, by initializing $(X_0,V_0)=(X(t_0),V(t_0))$ and forming the following iterative scheme: 
\begin{equation}\label{stochnest}
    \begin{cases}
        X_{k+1}&=X_k+hV_k,\\
        V_{k+1}&=(1-\gamma_k h)V_k-h\nabla f(X_k+\beta_k V_k) + \sqrt{h} \sigma_k G_k,
    \end{cases}
\end{equation}
where $G_k \sim \mathcal{N}(0,I_d)$, $\gamma_k = \gamma(t_k)$, $\sigma_k = \sigma(t_k,X_k+\beta_k V_k)$ and $\beta_k = \beta(t_k)$. This is a stochastic version of the algorithm proposed in \cite{alecsa}. Setting $Y_k=(X_k,V_k)$, it will be convenient to rewrite \eqref{stochnest} in the product space $\R^{2d}$ as
\begin{equation}
    \begin{cases}
        Y_{k+1}&=Y_k+h\Phi(t_k,Y_k)+\sqrt{h}{\varsigma}(t_k,Y_k)({W}(t_{k+1})-{W}(t_k)),\\
        Y_0&=(X_0,V_0) ,
    \end{cases}
\end{equation}
where ${W}$ is a $\R^{2d}-$valued Brownian motion and 
\[
\Phi(t,(x,v))=(v,-\gamma(t)v-\nabla f(x+\beta(t)v)), \quad \text{ and }\quad 
{\varsigma}(t,(x,v))=\begin{pmatrix}
    0_{d\times d} & 0_{d\times d}\\
    0_{d\times d} & \sigma(t,x+\beta(t)v)
\end{pmatrix}.
\]
This then motivates the SDE
\begin{equation}\label{ISIHD-S-product}
    \begin{cases}
        dY(t)&=\Phi(t,Y(t))dt+\sqrt{h}{\varsigma}(t,Y(t))d{W}(t),\\
        Y(t_0)&=(X_0,V_0),
    \end{cases}
\end{equation}
where we set $Y(t)=(X(t),V(t))$ which is \eqref{ISIHD-S} with an extra $\sqrt{h}$ in front the diffusion. 

As classical in numerical analysis of evolution equations, we define the continuous-time piece-wise linear extension of the sequence $(Y_k)_{k \in \N}$
\begin{align*}
\overline{Y}(t) 
&\eqdef Y_k + (t-t_k)\Phi(t_k,Y_k) + \sqrt{h}{\varsigma}(t_k,Y_k)({W}(t)-{W}(t_k)) \qforq t \in [t_k,t_{k+1}[\\
&= Y_0 + \int_{t_0}^t \widehat{\Phi}(s,\widehat{Y}(s))ds + \sqrt{h}\int_{t_0}^t \widehat{\varsigma}(s,\widehat{Y}(s))dW(s) ,
\end{align*}
where for $t \in [t_k,t_{k+1}[$, $\widehat{Y}(t) \eqdef Y_k$, $\widehat{\Phi}(t,\widehat{Y}(t)) \eqdef \Phi(t_k,Y_k)$, $\widehat{\varsigma}(t,\widehat{Y}(t)) \eqdef {\varsigma}(t_k,Y_k)$. We, also define $\widehat{\gamma}(t)$ and $\widehat{\beta}(t)$ similarly. We thus have $\widehat{\sigma}(t,\widehat{X}(t)+\widehat{\beta}(t)\widehat{V}(t)) = \sigma_k$ for $t \in [t_k,t_{k+1}[$.

Our result requires the following assumption.
\begin{assumption}\label{assump:isgfconsist}
$f:\R^d\rightarrow\R$ is continuously differentiable with $L$-Lipschitz continuous gradient. For $T > t_0$, $\gamma$ and $\beta$ are respectively $L_\gamma$- and $L_\beta$-Lipschitz continuous on $[t_0,T]$. $\sigma$ verifies, $\forall t,s \in [t_0,T]$ and $\forall x,x' \in \R^d$,
\[
\norm{\sigma(t,x)-\sigma(t,x')} \leq L_\sigma\norm{x-x'}, \norm{\sigma(t,x)} \leq K_\sigma(1+\norm{x}) \qandq \norm{\sigma(t,x)-\sigma(s,x)} \leq L_\sigma(1+\norm{x})|t-s|^{1/2} .
\]
where all constants $L_\gamma,L_\beta,L_\sigma,K_\sigma$ do not depend on $h$.
\end{assumption} 
All these assumptions are verified in the instances studied in Section~\ref{sec:scaling} when $f$ is smooth. Observe that these assumptions also ensure existence and uniqueness of the solution $(X(t),V(t))$ to \eqref{ISIHD-S-product} (hence \eqref{ISIHD-S}).

We have the strong consistency results which shows that \eqref{ISIHD-S-product} is a better continuous-time approximation to \eqref{stochnest} than \eqref{ISIHD}.
\begin{proposition}\label{prop:isgfconsist}
Suppose that Assumption~\ref{assump:isgfconsist} holds and that $X_0,V_0\in\Lp^2(\Omega;\H)$. Let $(X(t),V(t))$ be the solution to \eqref{ISIHD-S-product} and $(x(t),v(t)=\dot{x}(t))$ the solution to \eqref{ISIHD}. Then the iterates of algorithm \eqref{stochnest} satisfy, as $h \to 0$,
\begin{align}
\EE\sbrac{\sup_{0 \leq k \leq N-1}\norm{X_k - X(t_k)} + \norm{V_k - V(t_k)}} = \mathcal{O}(h),  \label{eq:isgfconsist}\\
\EE\sbrac{\sup_{0 \leq k \leq N-1}\norm{X_k - x(t_k)} + \norm{V_k - v(t_k)}} = \mathcal{O}(\sqrt{h}) \label{eq:igfconsist} .
\end{align}
\end{proposition}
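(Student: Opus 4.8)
The plan is to work in the product space $\R^{2d}$ with $Y(t)=(X(t),V(t))$ and to compare the iterates—encoded through the piecewise-linear interpolant $\overline{Y}$, which satisfies $\overline{Y}(t_k)=Y_k$—against the solution $Y$ of \eqref{ISIHD-S-product}. Since both $\overline{Y}$ and $Y$ admit the integral representations recorded just before the statement, subtracting them gives
\[
\overline{Y}(t)-Y(t)=\int_{t_0}^t\big(\widehat{\Phi}(s,\widehat{Y}(s))-\Phi(s,Y(s))\big)ds+\sqrt{h}\int_{t_0}^t\big(\widehat{\varsigma}(s,\widehat{Y}(s))-\varsigma(s,Y(s))\big)dW(s).
\]
First I would record, on a fixed horizon $[t_0,T]$, uniform-in-$h$ moment bounds $\sup_{k}\EE[\norm{Y_k}^2]<+\infty$ and $\sup_{t\in[t_0,T]}\EE[\norm{Y(t)}^2]<+\infty$. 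For the iterates this follows by taking conditional expectations in the recursion, using the linear growth of $\Phi$ and $\varsigma$ inherited from Assumption~\ref{assump:isgfconsist} (boundedness of $\gamma,\beta$ on $[t_0,T]$, Lipschitzness of $\nabla f$, linear growth of $\sigma$), the identity $\EE[\norm{W(t_{k+1})-W(t_k)}^2\mid\calF_{t_k}]=2dh$, and a discrete Grönwall argument; the bound for $Y$ is the classical SDE moment estimate.

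The heart of \eqref{eq:isgfconsist} is the local-increment estimate $\EE[\norm{\overline{Y}(s)-Y_k}^2]=\mathcal{O}(h^2)$ for $s\in[t_k,t_{k+1})$, and this is precisely where the extra $\sqrt{h}$ in the diffusion is decisive. Writing $\overline{Y}(s)-Y_k=(s-t_k)\Phi(t_k,Y_k)+\sqrt{h}\,\varsigma(t_k,Y_k)(W(s)-W(t_k))$, the drift contributes $\mathcal{O}(h^2)$ through $(s-t_k)^2\le h^2$, while the martingale part contributes $h\,\EE[\norm{\varsigma}^2]\,\EE[\norm{W(s)-W(t_k)}^2]=\mathcal{O}(h)\cdot\mathcal{O}(h)=\mathcal{O}(h^2)$, rather than the $\mathcal{O}(h)$ produced by plain Euler–Maruyama. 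I would then split each coefficient difference into a freezing term plus a propagation term, e.g. $\widehat{\Phi}(s,\widehat{Y}(s))-\Phi(s,Y(s))=\big(\Phi(t_k,Y_k)-\Phi(s,\overline{Y}(s))\big)+\big(\Phi(s,\overline{Y}(s))-\Phi(s,Y(s))\big)$, and likewise for $\varsigma$. The propagation terms are bounded by a Lipschitz constant times $\norm{\overline{Y}(s)-Y(s)}$ via the uniform-in-time state-Lipschitzness of $\Phi,\varsigma$; the mean-square of the drift freezing term is $\mathcal{O}(h^2)$, using the local-increment bound and the estimate $\norm{\Phi(t,y)-\Phi(s,y)}\le C(1+\norm{y})|t-s|$, whereas the diffusion freezing term is only $\mathcal{O}(h)$ in mean-square, governed by the $1/2$-Hölder-in-time regularity of $\sigma$. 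Applying Cauchy–Schwarz to the drift integral and Doob's maximal inequality with the Itô isometry to the stochastic integral—the latter carrying the prefactor $h$ coming from $\sqrt{h}$ squared, so that the $\mathcal{O}(h)$ diffusion freezing term contributes only $h\cdot\mathcal{O}(h)=\mathcal{O}(h^2)$—one reaches $\phi(u)\le C_1h^2+C_2\int_{t_0}^u\phi(s)\,ds$ for $\phi(u)=\EE[\sup_{t_0\le t\le u}\norm{\overline{Y}(t)-Y(t)}^2]$. Grönwall's lemma yields $\phi(T)=\mathcal{O}(h^2)$, and Jensen gives $\EE[\sup_k\norm{Y_k-Y(t_k)}]\le\sqrt{\phi(T)}=\mathcal{O}(h)$; since $\norm{X_k-X(t_k)}+\norm{V_k-V(t_k)}\le\sqrt{2}\,\norm{Y_k-Y(t_k)}$, this is \eqref{eq:isgfconsist}.

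For \eqref{eq:igfconsist} I would use $\norm{Y_k-y(t_k)}\le\norm{Y_k-Y(t_k)}+\norm{Y(t_k)-y(t_k)}$, the first term being $\mathcal{O}(h)$ by the previous step. For the second, the solution $Y$ of \eqref{ISIHD-S-product} and the solution $y=(x,v)$ of \eqref{ISIHD} share the same drift $\Phi$ and initial datum, so their difference satisfies $Y(t)-y(t)=\int_{t_0}^t(\Phi(s,Y(s))-\Phi(s,y(s)))ds+\sqrt{h}\int_{t_0}^t\varsigma(s,Y(s))dW(s)$. Here the stochastic term is no longer higher order: bounding it by Doob and the Itô isometry gives a contribution $4h\,\EE\int_{t_0}^T\norm{\varsigma(s,Y(s))}^2ds=\mathcal{O}(h)$ (using the moment bound and linear growth of $\sigma$), so Grönwall produces $\EE[\sup_{t\le T}\norm{Y(t)-y(t)}^2]=\mathcal{O}(h)$ and hence $\EE[\sup_k\norm{Y(t_k)-y(t_k)}]=\mathcal{O}(\sqrt{h})$, giving \eqref{eq:igfconsist}.

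The main obstacle I anticipate is not any individual inequality but the bookkeeping required to keep every estimate uniform in $h$ and to track exactly where the $\sqrt{h}$ prefactor enters, since this structural feature—absent in a generic Euler–Maruyama analysis—is what upgrades the order from $\sqrt{h}$ to $h$ against the SDE while still yielding only $\sqrt{h}$ against the noiseless ODE. The most delicate point is the diffusion freezing term, whose time-regularity is merely $1/2$-Hölder and hence only $\mathcal{O}(h)$ in mean-square; it is rescued purely by the $h$ prefactor of the stochastic integral.
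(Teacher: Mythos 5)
Your proof is correct and, for the main estimate \eqref{eq:isgfconsist}, follows essentially the same route as the paper: the same freezing-plus-propagation decomposition of the coefficient differences, the same key mechanism (the $\sqrt{h}$ prefactor on the diffusion turning the merely $\mathcal{O}(h)$ mean-square time-freezing error of $\sigma$, due to its $1/2$-H\"older time regularity, into an $\mathcal{O}(h^2)$ contribution after the It\^o isometry), and the same Doob--Gronwall--Jensen conclusion. The only genuine divergence is in \eqref{eq:igfconsist}: the paper re-runs its main inequality with the ODE solution in place of the SDE solution, observing that the non-cancelled term $\sqrt{h}\int\widehat{\varsigma}\,dW$ now contributes the leading $\mathcal{O}(h)$ mean-square error, whereas you insert the SDE solution via the triangle inequality $\norm{Y_k-y(t_k)}\leq\norm{Y_k-Y(t_k)}+\norm{Y(t_k)-y(t_k)}$ and run a separate, standard SDE-versus-ODE Gronwall comparison; your variant is slightly more modular (it reuses \eqref{eq:isgfconsist} as a black box), while the paper's avoids introducing the intermediate comparison, and both hinge on the identical observation that the uncancelled stochastic integral is $\mathcal{O}(h)$ in mean square, hence $\mathcal{O}(\sqrt{h})$ after taking square roots.
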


\begin{proof}
By Assumption~\ref{assump:isgfconsist}, it is not difficult to see that $\Phi$ and $\varsigma$ verify the following the Lipschitz continuity and linear growth properties,
\begin{equation}\label{eq:liplingrowPhisig}
\begin{aligned}
\norm{\Phi(t,y) - \Phi(t,z)}^2 &\leq \max(4L^2,1+2\gamma_{\max}^2+4L^2\beta_{\max}^2)\norm{y-z}^2, &\forall t \in [t_0,T], y,z \in \R^d , \\
\norm{\Phi(t,y) - \Phi(s,y)}^2 &\leq (2L_\gamma^2+4L^2L_\beta^2)\norm{y}^2|t-s|^2, &\forall s,t \in [t_0,T], y \in \R^d , \\
\norm{\Phi(t,y)}^2 &\leq \max(8L^2,1+2\gamma_{\max}^2+8L^2\beta_{\max}^2)\norm{y}^2 + 4 \norm{\nabla f(0)}^2, &\forall t \in [t_0,T], y \in \R^d . 
\end{aligned}
\end{equation}

In the rest of the proof, $C$ is any positive constant that does not depend on $h$ (and may depend on $d$, $T-t_0$, the different Lipschitz and growth constants in Assumption~\ref{assump:isgfconsist}), and which may change from one line to another. Using Jensen's inequality, Doob's martingale inequality and \cite[Theorem~1.5.21]{mao}, we have for any $\tau \in [t_0,T]$
\begin{align}
&\EE\sbrac{\sup_{t_0 \leq t \leq \tau}\norm{\overline{Y}(t) - Y(t)}^2} \\
=& 
\EE\sbrac{\sup_{t_0 \leq t \leq \tau}\norm{\int_{t_0}^t \pa{\widehat{\Phi}(s,\widehat{Y}(s)) - {\Phi}(s,{Y}(s))}ds + \sqrt{h}\int_{t_0}^t \pa{\widehat{\varsigma}(s,\widehat{Y}(s)) - {\varsigma}(s,{Y}(s))}dW(s)}^2} \label{eq:consistmainineq}\\
\leq& C \EE\sbrac{\int_{t_0}^\tau\norm{\widehat{\Phi}(s,\widehat{Y}(s)) - {\Phi}(s,{Y}(s))}^2 ds} + 
Ch\EE\sbrac{\int_{t_0}^\tau \norm{\widehat{\sigma}(s,\widehat{X}(s)+\widehat{\beta}(s)\widehat{V}(s)) - {\sigma}(s,{X}(s)+{\beta}(s){V}(s))}^2ds} \nonumber.
\end{align}
Using Jensen's inequality, \eqref{eq:liplingrowPhisig} and Assumption~\ref{assump:isgfconsist} on $\sigma$, we get
\begin{align*}
&\EE\sbrac{\sup_{t_0 \leq t \leq \tau}\norm{\overline{Y}(t) - Y(t)}^2} \\
\leq& C\EE\sbrac{\int_{t_0}^\tau\norm{\widehat{\Phi}(s,\widehat{Y}(s)) - {\Phi}(s,\widehat{Y}(s))}^2 ds} + 
Ch\EE\sbrac{\int_{t_0}^\tau \norm{\widehat{\sigma}(s,\widehat{X}(s)+\widehat{\beta}(s)\widehat{V}(s)) - {\sigma}(s,\widehat{X}(s)+\widehat{\beta}(s)\widehat{V}(s))}^2ds} \\
& C(1+h)\int_{t_0}^\tau\EE\sbrac{\sup_{t_0 \leq r \leq s}\norm{\overline{Y}(r) - Y(r)}^2} ds +
C(1+h)\EE\sbrac{\int_{t_0}^\tau\norm{\overline{Y}(t) - \widehat{Y}(t)}^2 ds} \\
&+ Ch\EE\sbrac{\sup_{t_0 \leq t \leq T}\norm{Y(t)}^2}\int_{t_0}^\tau |\beta(s)-\widehat{\beta}(s)|^2 ds ,
\end{align*}
where the expectation in the last display is bounded by \cite[Theorem~4.5.4]{Kloeden92}. Let $N = \lfloor (T-t_0)/h \rfloor$. Lipschitz continuity of $\beta$ gives
\[
\int_{t_0}^\tau |\beta(s)-\widehat{\beta}(s)|^2 ds \leq \sum_{k=0}^{N-1}\int_{t_k}^{t_{k+1}} |\beta(s)-\beta(t_k)|^2 ds \leq L_\beta^2 \sum_{k=0}^{N-1}\int_{t_k}^{t_{k+1}} (s-t_k)^2 ds = \frac{L_\beta^2(T-t_0)}{3} h^2 .
\]
Now, we have for any $s \in [t_k,t_{k+1}[$
\[
\overline{Y}(s) - \widehat{Y}(s) = -(s-t_k)\Phi(t_k,Y_k) - \sqrt{h}{\varsigma}(t_k,Y_k)({W}(s)-{W}(t_k)) .
\]
It then follows from \eqref{eq:liplingrowPhisig}, Assumption~\ref{assump:isgfconsist} on $\sigma$ and \cite[Theorem~4.5.4]{Kloeden92} that
\[
\EE\sbrac{\int_{t_0}^\tau\norm{\overline{Y}(t) - \widehat{Y}(t)}^2 ds} \leq \sum_{k=0}^{N-1}\int_{t_k}^{t_{k+1}} \EE\sbrac{\norm{(s-t_k)\Phi(t_k,Y_k) + \sqrt{h}{\varsigma}(t_k,Y_k)({W}(s)-{W}(t_k))}^2} ds \leq C h^2 .
\]
With similar arguments, we have
\[
\EE\sbrac{\int_{t_0}^\tau\norm{\widehat{\Phi}(s,\widehat{Y}(s)) - {\Phi}(s,\widehat{Y}(s))}^2 ds} =
\sum_{k=0}^{N-1}\int_{t_k}^{t_{k+1}} \EE\sbrac{\norm{{\Phi}(t_k,{Y}_k) - {\Phi}(s,{Y}_k)}^2} ds \leq C h^2 
\]
and using Assumption~\ref{assump:isgfconsist} on $\sigma$ 
\begin{multline*}
\EE\sbrac{\int_{t_0}^\tau \norm{\widehat{\sigma}(s,\widehat{X}(s)+\widehat{\beta}(s)\widehat{V}(s)) - {\sigma}(s,\widehat{X}(s)+\widehat{\beta}(s)\widehat{V}(s))}^2ds} \\
= \sum_{k=0}^{N-1}\int_{t_k}^{t_{k+1}} \EE\sbrac{\norm{{\sigma}(t_k,X_k+\beta_kV_k) - {\sigma}(s,X_k+\beta_kV_k)}^2} ds \leq C h . 
\end{multline*}
Collecting all bounds, we get
\begin{align*}
\EE\sbrac{\sup_{t_0 \leq t \leq \tau}\norm{\overline{Y}(t) - Y(t)}^2} \leq 
C(h^2+h^3) + C(1+h)\int_{t_0}^\tau\EE\sbrac{\sup_{t_0 \leq r \leq s}\norm{\overline{Y}(r) - Y(r)}^2} ds .
\end{align*}
Applying the Jensen and Gronwall inequalities then gives
\[
\EE\sbrac{\sup_{t_0 \leq t \leq T}\norm{\overline{Y}(t) - Y(t)}} \leq \EE\sbrac{\sup_{t_0 \leq t \leq T}\norm{\overline{Y}(t) - Y(t)}^2}^{1/2} \leq C(h+h^{3/2})e^{C(1+h)T} = \mathcal{O}(h) .
\]
In turn
\[
\EE\sbrac{\sup_{0 \leq k \leq N-1}\norm{Y_k - Y(t_k)}} = \EE\sbrac{\sup_{0 \leq k \leq N-1}\norm{\overline{Y}(t_k) - Y(t_k)}} \leq \EE\sbrac{\sup_{t_0 \leq t \leq T}\norm{\overline{Y}(t) - Y(t)}} = \mathcal{O}(h) ,
\]
which gives \eqref{eq:isgfconsist}. For \eqref{eq:igfconsist}, it is sufficient to see that with the system \eqref{ISIHD-S-product}, the term $\varsigma(s,y(s))$ disappears in the main inequality \eqref{eq:consistmainineq}. Starting from there, this becomes the leading term that scales as $\mathcal{O}(h)$ (instead of $\mathcal{O}(h^2)$ previously) hence entailing \eqref{eq:igfconsist}. 
\end{proof}
}
\section{Auxiliary results}\label{aux}
\subsection{Deterministic results}
\label{detaux}
\begin{lemma}\label{lim0}
Let $t_0>0$ and $a,b:[t_0,+\infty[\rightarrow \R_+$. If $\lim_{t\rightarrow \infty} a(t)$ exists, $b\notin\Lp^1([t_0,+\infty[)$ and $\int_{t_0}^\infty a(s)b(s)ds<+\infty$, then $\lim_{t\rightarrow \infty} a(t)=0.$
\end{lemma}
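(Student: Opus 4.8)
The plan is to argue by contradiction, exploiting the fact that a strictly positive limit for $a$ would force the integrand $a b$ to behave, near infinity, like the non-integrable function $b$. Since $a$ takes non-negative values, its limit $L \eqdef \lim_{t\to\infty} a(t)$ is automatically non-negative, so it suffices to rule out $L > 0$.

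First I would assume $L > 0$. By the $\varepsilon$-definition of the limit there exists $T \geq t_0$ such that $a(t) \geq L/2$ for every $t \geq T$. Using that $a,b \geq 0$, I would then bound the tail of the integral from below by monotonicity:
$$\int_{t_0}^{\infty} a(s)b(s)\,ds \;\geq\; \int_{T}^{\infty} a(s)b(s)\,ds \;\geq\; \frac{L}{2}\int_{T}^{\infty} b(s)\,ds.$$
Since $b \geq 0$ is not integrable on $[t_0,+\infty[$ while it is integrable on the compact interval $[t_0,T]$, the tail $\int_T^{\infty} b(s)\,ds$ must equal $+\infty$. Hence the right-hand side is $+\infty$, which contradicts the standing hypothesis $\int_{t_0}^{\infty} a(s)b(s)\,ds < +\infty$. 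Therefore $L = 0$, as claimed.

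I expect no genuine obstacle here, as the statement is elementary. The only point deserving a word of care is the passage from $b \notin \Lp^1([t_0,+\infty[)$ to $\int_T^{\infty} b = +\infty$: this uses the (harmless) local integrability of $b$, ensuring $\int_{t_0}^{T} b < +\infty$ so that the divergence of $\int_{t_0}^{\infty} b$ is carried entirely by the tail. This is satisfied in every application of the lemma in the paper, where the relevant $a$ and $b$ are continuous. Everything else reduces to monotonicity of the integral combined with eventual lower-boundedness of $a$ coming from the existence of a positive limit.
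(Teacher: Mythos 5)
Your proof is correct. For the record, the paper does not actually prove this lemma: it is stated in the appendix as an elementary auxiliary fact (the proof environment that follows it in the source belongs to Lemma~\ref{ab}), so there is no argument in the paper to compare yours against; the contradiction argument you give is the natural one. Your closing caveat is also genuinely needed, not just a formality: as literally stated, with $b$ merely measurable and possibly not locally integrable, the lemma is false. For instance, take $a(s)=(s-t_0)^2$ and $b(s)=(s-t_0)^{-2}$ on $]t_0,t_0+1]$, and $a(s)=1$, $b(s)=e^{-s}$ for $s>t_0+1$: then $b\notin\Lp^1([t_0,+\infty[)$ (the divergence sits at $t_0$), $\int_{t_0}^{\infty}a(s)b(s)\,ds<+\infty$, yet $\lim_{t\rightarrow\infty}a(t)=1$. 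So the correct reading of the hypothesis is that the tail integrals $\int_T^{\infty}b(s)\,ds$ diverge, which is exactly what your argument uses, and which holds in all of the paper's applications of the lemma since the relevant functions there are continuous, hence locally integrable.
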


\begin{lemma}\label{ab}
Let $a,b:[t_0,+\infty[\rightarrow\R_+$ be two functions such that $a\notin \Lp^1([t_0,+\infty[)$, $\lim_{u\rightarrow+\infty}b(u)=0$, and define $A(t)\eqdef\int_{t_0}^{t}a(u)du$ and $B(t)\eqdef e^{-A(t)}\int_{t_0}^t a(u)e^{A(u)}b(u) du$. Then $\lim_{t\rightarrow+\infty}B(t)=0$.
\end{lemma}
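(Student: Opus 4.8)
The plan is to recognize $B(t)$ as a continuous Ces\`aro-type (weighted) average of $b$ against the measure $d\nu(u)\eqdef a(u)e^{A(u)}\,du$, and to exploit that $b$ vanishes at infinity together with $A(t)\to+\infty$. The cornerstone is the elementary identity coming from $\tfrac{d}{du}e^{A(u)}=a(u)e^{A(u)}$ and $A(t_0)=0$, namely
\[
\int_{t_0}^t a(u)e^{A(u)}\,du=e^{A(t)}-1,
\]
so the total weight satisfies $e^{-A(t)}\int_{t_0}^t a(u)e^{A(u)}\,du=1-e^{-A(t)}\leq 1$. First I would record that, since $a\geq 0$ and $a\notin\Lp^1([t_0,+\infty[)$, the primitive $A$ is non-decreasing with $\lim_{t\to+\infty}A(t)=+\infty$, hence $e^{-A(t)}\to 0$; note also $B(t)\geq 0$ because $a,b\geq 0$.

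The core of the argument is then a standard $\varepsilon$-splitting. Fixing $\varepsilon>0$, since $\lim_{u\to+\infty}b(u)=0$ I would choose $T_\varepsilon\geq t_0$ with $b(u)\leq\varepsilon$ for all $u\geq T_\varepsilon$ and write
\[
B(t)=e^{-A(t)}\int_{t_0}^{T_\varepsilon} a(u)e^{A(u)}b(u)\,du+e^{-A(t)}\int_{T_\varepsilon}^{t} a(u)e^{A(u)}b(u)\,du.
\]
For the second term, bounding $b(u)\leq\varepsilon$ and invoking the identity above gives
\[
e^{-A(t)}\int_{T_\varepsilon}^{t} a(u)e^{A(u)}b(u)\,du\leq \varepsilon\, e^{-A(t)}\bigl(e^{A(t)}-e^{A(T_\varepsilon)}\bigr)\leq\varepsilon.
\]
For the first term, the integral $\int_{t_0}^{T_\varepsilon} a(u)e^{A(u)}b(u)\,du$ is a finite constant independent of $t$ (here one uses that $e^{A(u)}\leq e^{A(T_\varepsilon)}$ on $[t_0,T_\varepsilon]$ and that $b$ is locally bounded, which is guaranteed since $b$ has a finite limit), so multiplying by $e^{-A(t)}\to 0$ sends it to $0$. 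Consequently $\limsup_{t\to+\infty}B(t)\leq\varepsilon$, and letting $\varepsilon\downarrow 0$ together with $B(t)\geq 0$ yields $\lim_{t\to+\infty}B(t)=0$.

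I do not anticipate a genuine obstacle here, as the statement is a continuous analogue of the Toeplitz/Stolz--Ces\`aro lemma; the only point requiring mild care is the well-posedness of the first chunk $\int_{t_0}^{T_\varepsilon}a(u)e^{A(u)}b(u)\,du$, which is finite as soon as $b$ is locally integrable on $[t_0,T_\varepsilon]$ --- automatic in every application in the paper, where $b$ is continuous. An alternative, equally short route would be the change of variable $w=A(u)$ (when $A$ is strictly increasing), reducing $B$ to $e^{-s}\int_0^s e^{w}\tilde b(w)\,dw$ with $s=A(t)\to+\infty$ and $\tilde b(w)\to 0$; but the direct splitting above avoids assuming invertibility of $A$ and is therefore preferable.
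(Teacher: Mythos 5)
Your proof is correct and follows essentially the same route as the paper's: the same $\varepsilon$-splitting at a threshold $T_\varepsilon$ where $b\leq\varepsilon$, bounding the tail by $\varepsilon$ via the identity $\int a(u)e^{A(u)}du=e^{A(u)}+\mathrm{const}$, and killing the head term with $e^{-A(t)}\to 0$, which follows from $a\notin\Lp^1([t_0,+\infty[)$. Your write-up is slightly more explicit about the total-mass identity and the local boundedness of $b$, but the argument is the same.
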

\proof{}
Let $\varepsilon>0$ arbitrary, let us take $T_{\varepsilon}$ such that $t_0<T_{\varepsilon}$ and $b(u)\leq\varepsilon$ for $u\geq T_{\varepsilon}$. For $t>T_{\varepsilon}$, let us write \begin{align*}
    B(t)&=e^{-A(t)}\int_{t_0}^{T_{\varepsilon}}a(u)e^{A(u)}b(u)du+e^{-A(t)}\int_{T_{\varepsilon}}^t a(u)e^{A(u)}b(u) du\\
    &\leq e^{-A(t)}\int_{t_0}^{T_{\varepsilon}}a(u)e^{A(u)}b(u)du+\varepsilon.
\end{align*}
Since $a\notin \Lp^1([t_0,+\infty[)$, then $\lim_{t\rightarrow+\infty}e^{-A(t)}=0$, we get $$\limsup_{t\rightarrow+\infty}B(t)\leq \varepsilon.$$
This being true for any $\varepsilon>0$, we infer that $\lim_{t\rightarrow+\infty}B(t)=0$, which gives the claim.
\endproof

\begin{lemma}\label{1gam}
Under hypothesis \eqref{H1}, then $$\int_{t_0}^{\infty} \frac{ds}{\Gamma(s)}=+\infty.$$
\end{lemma}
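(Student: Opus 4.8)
The plan is to observe that the integrand $1/\Gamma$ is, up to a sign, the logarithmic derivative of the tail function
\[
P(t)\eqdef \int_t^{\infty}\frac{ds}{p(s)},
\]
which reduces the claimed divergence to a one-line computation. Under \eqref{H1} the integral defining $P(t)$ is finite for every $t\geq t_0$, so that $P$ is well-defined, strictly positive, strictly decreasing, and—being the tail of a convergent integral—satisfies $P(t)\to 0$ as $t\to +\infty$. By the fundamental theorem of calculus, $P'(t)=-1/p(t)$, and by the very definition $\Gamma(t)=p(t)P(t)$.

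Combining these facts, I would first write
\[
\frac{1}{\Gamma(t)}=\frac{1}{p(t)P(t)}=\frac{-P'(t)}{P(t)}=-\frac{d}{dt}\ln P(t),
\]
and then integrate from $t_0$ to $T$ to obtain
\[
\int_{t_0}^{T}\frac{dt}{\Gamma(t)}=\ln P(t_0)-\ln P(T).
\]
Letting $T\to +\infty$ and using $P(T)\to 0^+$ gives $\ln P(T)\to -\infty$, so the right-hand side tends to $+\infty$, which is exactly the assertion $\int_{t_0}^{\infty}\frac{ds}{\Gamma(s)}=+\infty$.

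There is essentially no obstacle beyond justifying the well-posedness and the vanishing of the tail $P$, both of which are precisely encoded in the second condition of \eqref{H1}; the differentiation under the integral sign is the elementary fundamental theorem of calculus applied to $s\mapsto 1/p(s)$, which is continuous and positive. It is worth noting that only the integrability condition $\int_{t_0}^{\infty}\frac{ds}{p(s)}<+\infty$ is actually used here, while the monotonicity assumption on $\gamma$ in \eqref{H1} (needed elsewhere to show $\Gamma\notin\Lp^1$) plays no role in this particular lemma.
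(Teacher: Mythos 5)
Your proof is correct and is essentially identical to the paper's own argument: both recognize $1/\Gamma$ as $-\tfrac{d}{dt}\ln$ of the tail function $\int_t^\infty ds/p(s)$ (called $q$ in the paper, $P$ in your write-up) and conclude divergence from the fact that this tail vanishes at infinity. Your closing observation that only the integrability condition in \eqref{H1} is needed here is a correct and worthwhile remark, but the core argument matches the paper's proof step for step.
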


\proof{}
Let $q(t)\eqdef \int_t^{\infty}\frac{ds}{p(s)}$, since $\int_{t_0}^{\infty}\frac{ds}{p(s)}<+\infty$, then $\lim_{t\rightarrow\infty} q(t)=0$ and $q'(t)=-\frac{1}{p(t)}$. On the other hand \begin{align*}
    \int_{t_0}^{\infty} \frac{ds}{\Gamma(s)}=-\int_{t_0}^{\infty}\frac{q'(t)}{q(t)}=\ln(q(t_0))-\lim_{t\rightarrow\infty}\ln(q(t))=+\infty.
\end{align*}
\endproof

\subsection{On stochastic processes}\label{onstochastic}
Let us recall some elements of stochastic analysis. Throughout the paper, $(\Omega,\mathcal F,\mathbb P)$ is a probability space and $\{{\mathcal F}_t| t\geq 0\}$ is a filtration of the $\sigma-$algebra $\mathcal F$. Given $\mathcal{C}\in 2^\Omega$, we will denote $\sigma(\mathcal{C})$ the $\sigma-$algebra generated by $\mathcal{C}$. We denote $\mathcal F_{\infty}\eqdef \sigma \left(\bigcup_{t\geq 0} \mathcal F_t \right)\in\mathcal F$.

The expectation of a random variable $\xi:\Omega\rightarrow\H$ is denoted by 
\[
\EE(\xi)\eqdef \int_{\Omega}\xi(\omega)d\PP(\omega).
\]
An event $E\in\calF$ happens almost surely if $\PP(E)=1$, and it will be denoted as ``$E$, $\PP$-a.s.'' or simply ``$E$, a.s.''. The indicator function of an event $E\in\calF$ is denoted by 
\[
\ind_E (\omega) \eqdef
\begin{cases}
1 & \text{if } \omega\in E,\\
0 & \text{otherwise}.
\end{cases}
\] 
An $\H$-valued stochastic process starting at $t_0\geq 0$ is a function $X:\Omega\times[t_0,+\infty[\rightarrow\H$. It is said to be continuous if $X(\omega,\cdot)\in C([t_0,+\infty[;\H)$ for almost all $\omega\in\Omega$. We will denote $X(t)\eqdef X(\cdot,t)$. We are going to study SDEs and SDIs, and in order to ensure the uniqueness of a solution, we introduce a relation over stochastic processes. Two stochastic processes $X,Y:\Omega\times [t_0,T]\rightarrow\H$ are said to be equivalent if $X(t)=Y(t)$, $\forall t\in [t_0,T]$, $\PP$-a.s. This leads us to define the equivalence relation $\calR$, which associates the equivalent stochastic processes in the same class. 

\smallskip

Furthermore, we will need some properties about the measurability of these processes. A stochastic process $X:\Omega\times [t_0,+\infty[\rightarrow\H$ is progressively measurable if for every $t\geq t_0$, the map $\Omega\times[t_0,t]\rightarrow\H$ defined by $(\omega,s)\rightarrow X(\omega,s)$ is $\calF_t\otimes\calB([t_0,t])$-measurable, where $\otimes$ is the product $\sigma$-algebra and $\calB$ is the Borel $\sigma$-algebra. On the other hand, $X$ is $\calF_t$-adapted if $X(t)$ is $\calF_t$-measurable for every $t\geq t_0$. It is a direct consequence of the definition that if $X$ is progressively measurable, then $X$ is $\calF_t$-adapted.

\smallskip

Let us define the quotient space:
\[
S_{\H}^0[t_0,T] \eqdef \left\lbrace X: \Omega\times[t_0,T]\rightarrow\H, \; X \text{ is a prog. measurable cont. stochastic process}\right\rbrace\Big/\calR.
\]
Set $S_{\H}^0[t_0]\eqdef \bigcap_{T\geq t_0} S_{\H}^0[t_0,T]$.
For $\nu>0$, we define $S_{\H}^{\nu}[t_0,T]$ as the subset of processes $X(t)$ in $S_{\H}^0[t_0,T]$ such that 
\[
S_{\H}^{\nu}[t_0,T] \eqdef  \left\lbrace X\in S_{\H}^0[t_0,T]:  \;
 \EE\pa{\sup_{t\in[t_0,T]}\norm{X_t}^{\nu}}<+\infty \right\rbrace.
\] 
We define $S_{\H}^{\nu}[t_0] \eqdef \bigcap_{T\geq t_0} S_{\H}^{\nu}[t_0,T]$. 
\smallskip

Let $\K$ be a real separable Hilbert space, and $I\subseteq \N$ be a numerable set such that $\{e_i\}_{i\in I}$ is an orthonormal basis of $\K$, and $\{w_i(t)\}_{i\in I, t\geq 0}$ be a sequence of independent Brownian motions defined on the filtered space $(\Omega,\calF,\calF_t,\Pro)$. The process $$W(t)=\sum_{i\in I} w_i(t)e_i$$
is well-defined (independent from the election of $\{e_i\}_{i\in I}$) and is called a $\K$-valued Brownian motion. Besides, let $G:\Omega\times\R_+\rightarrow\calL_2(\K;\H)$ be a measurable and $\calF_t$-adapted process, then we can define $\int_0^t G(s)dW(s)$ which is the stochastic integral of $G$, and we have that the application $G\rightarrow\int_0^{\cdot}G(s)dW(s)$ is an isometry between the measurable and $\calF_t-$adapted $\calL_2(\K;\H)-$valued processes and the space of $\H$-valued continuous square-integrable martingales \cite[Theorem 2.3]{infinite}.

\bibliographystyle{plain}
\smaller
\bibliography{citas}

\end{document}